\tikzstyle{tikzfig}=[baseline=-0.25em,scale=0.5]
\tikzstyle{none}=[inner sep=0mm]
\newcommand{\tikzfig}[1]{%
{\tikzstyle{every picture}=[tikzfig]
\IfFileExists{#1.tikz}
  {\input{#1.tikz}}
  {%
    \IfFileExists{./figures/#1.tikz}
      {\input{./figures/#1.tikz}}
      {\tikz[baseline=-0.5em]{\node[draw=red,font=\color{red},fill=red!10!white] {\textit{#1}};}}%
  }}%
}
\tikzstyle{every loop}=[]
\tikzstyle{new style 0}=[fill=cyan, draw=black, shape=circle]
\numberwithin{equation}{section}
\newtheorem{theorem}{Theorem}[section]
\newtheorem{lemma}[theorem]{Lemma}
\newtheorem{corollary}[theorem]{Corollary}
\newtheorem{proposition}[theorem]{Proposition}
\theoremstyle{definition}
\newtheorem{example}[theorem]{Example}
\newtheorem{definition}[theorem]{Definition}
\newtheorem{definition-lemma}[theorem]{Definition-Lemma}
\newtheorem{definition-theorem}[theorem]{Definition-Theorem}
\newtheorem{remark}[theorem]{Remark}
\newcommand{\BC}{\mathbb{C}}
\newcommand{\BE}{\mathbb{E}}
\newcommand{\BF}{\mathbb{F}}
\newcommand{\BH}{\mathbb{H}}
\newcommand{\BL}{\mathbb{L}}
\newcommand{\CP}{\mathbb{P}}
\newcommand{\BR}{\mathbb{R}}
\newcommand{\BZ}{\mathbb{Z}}
\newcommand{\CC}{\mathcal{C}}
\newcommand{\TT}{\mathcal{T}}
\newcommand{\HH}{\mathcal{H}}
\newcommand{\CB}{\mathcal{B}}
\newcommand{\CD}{\mathcal{D}}
\newcommand{\CF}{\mathcal{F}}
\newcommand{\CO}{\mathcal{O}}
\newcommand{\CA}{\mathcal{A}}
\newcommand{\lan}{\langle}
\newcommand{\ran}{\rangle}
\newcommand{\wtil}{\widetilde}
\newcommand{\RHom}{\mathbb{R}\mathrm{Hom}}
\DeclareMathOperator{\Aut}{Aut}
\DeclareMathOperator{\HO}{H}
\DeclareMathOperator{\Hom}{Hom}
\DeclareMathOperator{\Ext}{Ext}
\DeclareMathOperator{\End}{End}
\DeclareMathOperator{\cone}{Cone}
\DeclareMathOperator{\sym}{Sym}
\DeclareMathOperator{\qcoh}{QCoh}
\DeclareMathOperator{\Coh}{Coh}
\DeclareMathOperator{\Rep}{Rep}
\DeclareMathOperator{\Sim}{Sim}
\DeclareMathOperator{\Stab}{Stab}
\DeclareMathOperator{\Tot}{Tot}
\begin{document}
\title{t-structures on a local 4-Calabi-Yau variety}

\address{University of Sheffield, the Hicks building, Hounsfield Road, Sheffield S3 7RH, UK}
\email{yxiong20@sheffield.ac.uk}

\author{Yirui Xiong}
\maketitle
\linespread{1.5}
\selectfont

\begin{abstract}
    Let $X= \Tot \Omega_{\CP^2}$ denote the total space of cotangent bundle of $\CP^2$. This is a non-compact Calabi-Yau $4$-fold (also called local Calabi-Yau variety in the physics literature). The aim of this paper is to use the tilting objects to characterize a large class of t-structures in the derived category of $X$ and study  the combinatorics of their simple tilts.
\end{abstract}

\section{Introduction}

The conception of the t-structure in the derived category (more generally triangulated category) first appeared in the paper of Beilinson, Bernstein and Deligne \cite{BBD} as a powerful tool to study the perverse sheaves of a stratified topological space. The key feature of a t-structure is giving a formal construction of abelian subcategories (called hearts) other than the standard one, and this becomes an important ingredient in the definition of stability conditions in triangulated categories by Bridgeland \cite{Bri07}. 

Let $\CD$ be a triangulated category, a stability condition on $\CD$ is a pair of $\sigma = (\CA, Z)$ where $\CA$  is a heart of some bounded t-structure in $\CD$ and a group homomorphism $Z: K(\CA) \rightarrow \BC$  satisfying Harder-Narasimhan property\cite[Proposition 5.3]{Bri07}. One of the most important properties is that the space of stability conditions $\Stab \CD$ (satisfying the local-finite conditions) will form a complex manifold, perhaps infinite dimensional \cite[Theorem 1.2]{Bri07}. In particular when the heart $\CA$ is of finite length, then we can get an open subset $U(\CA)\subset \Stab \CD$ which consists of stability conditions with fixed bounded hearts $\CA$. $U(\CA)$ is isomorphic to $\BH^n$ ($\BH$ is the upper half plane) where $n$ is the number of simple objects in $\CA$. And two such open subsets are glued along 1-codimensional subset if the corresponding hearts are related by a simple tilt (see Section \ref{t_str}).

The t-structures of $3$-Calabi-Yau categories have been studied a lot, and the space of stability conditions have been calculated in many cases\cite{Bri05, Bri06, KS08, BS15}. For example,  Bridgeland in \cite{Bri05} studied the case of total space of $\omega_{\CP^2}$ the canonical bundle where the t-structures are given by full exceptional collections on $\CP^2$, and simple tilts are related to mutations of exceptional collections on $\CP^2$.  In this paper, we  are focusing on $X = \Tot\Omega_{\CP^2}$ the total space of cotangent bundle, which is a non-compact Calabi-Yau $4$-fold. It is worth studying $X$ since it is the Springer resolution of the nilpotent orbit in $sl(3)$. The t-structures of the derived category of $X$ (more generally the cotagent bundle of flag varieties) were first studied in the work \cite{BZ06, BR12, BM13}, where the so-called exotic t-structures were found. In our paper, we constructed a large class of t-structures  by finding the tilting objects in $D^b(\Coh X)$. 

We firstly explain how to construct a t-structure by using tilting objects:

\subsection{}
Let $\CD = D^b(\Coh V)$ where $V$ is a variety over $\BC$, $T$ be an object in $\CD$ satisfying
\begin{enumerate}
    \item $\Hom_{\CD}\left(T,T[n]\right) = 0$, unless $n=0$;
    \item $T$ is  a classical generator of $\CD$, i.e., the smallest triangulated subcategory containing $T$ is $\CD$,
\end{enumerate}
then $T$ is called a tilting object. By Rickard's \cite{R89,Keller94} general theory of derived Morita equivalence, we have an equivalence between derived categories:
\begin{equation}\label{d_morita}
    \Hom_{\CD}\left(T, - \right): \CD \longrightarrow D^b(\text{mod-} B)
\end{equation}
where $B := \End_V T$. What we are mostly interested is that the above equivalence will give us a bounded t-structure $(\CD^{\leq 0}, \CD^{\geq 0})$ by pulling back the standard one on the right side:
\begin{eqnarray*}
    \CD^{\leq 0}&=& \left\{ F\in \CD: \Hom_{\CD}(T,F[i]) = 0,\ i>0 \right\},\\
    \CD^{\geq 0}&=& \left\{ F\in \CD: \Hom_{\CD}(T,F[i]) = 0,\ i<0 \right\}.
\end{eqnarray*}
We are characterizing the bounded t-structures of the above form in our case.

\subsection{}
We denote the projection map $\pi: X= \Tot \Omega_{\CP^2} \rightarrow \CP^2$. The usual strategy is to find tilting objects in $D^b(\Coh X)$ via pulling back the tilting objects from $\CP^2$ by $\pi$. More precisely, we consider the full exceptional collection $\BE = (E_0,E_1,E_2)$ (see definition \ref{exc_coll}) consisting of sheaves on $\CP^2$ thus $\oplus_i E_i$ is a tilting object on $\CP^2$, then we try to show that $\pi^*\left(\oplus_i E_i\right)$ is a tilting object for $X$.  However, this is not always possible. We need to use the helix theory: for any full exceptional collection $\BE = (E_0,\cdots, E_n)$, one can associate $\BH = (\cdots,E_{-1}, E_0,E_1,\cdots,E_n,\cdots)$ a collection of exceptional objects which is periodic under the Serre functor, and any successive $n+1$ objects in $\BH$ (called a thread) will be a full exceptional collection. Now our first result on tilting objects in $D^b(\Coh X)$ is  stated as the following:
\begin{proposition}[= Proposition \ref{tilting}]
	For any full and strong exceptional collection (see Definition \ref{exc_coll}) $\BE=(E_0,E_1,E_2)$ on $\CP^2$, one can always find a thread $(E_i,E_{i+1},E_{i+2})$ $i\in \BZ$ in the corresponding helix $\BH$, such that the pulling back $\bigoplus_{i}^{i+2} \pi^* E_i$ is a tilting object in $D^b(\Coh X)$.
\end{proposition}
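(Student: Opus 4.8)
The plan is to verify the two defining properties of a tilting object for $T := \bigoplus_{j=i}^{i+2}\pi^*E_j$ directly, after transporting every computation down to $\CP^2$. The morphism $\pi\colon X\to\CP^2$ is affine, so $R\pi_*=\pi_*$ is exact and conservative, and $\pi_*\CO_X=\sym^{\bullet}T_{\CP^2}$ is the symmetric algebra of $\Omega_{\CP^2}^{\vee}=T_{\CP^2}$. Combining adjunction with the projection formula gives, for all $a,b$,
\begin{equation*}
\RHom_X(\pi^*E_a,\pi^*E_b)\;\cong\;\RHom_{\CP^2}\!\bigl(E_a,\,E_b\otimes\pi_*\CO_X\bigr)\;\cong\;\bigoplus_{k\ge 0}\RHom_{\CP^2}\!\bigl(E_a,\,E_b\otimes\sym^{k}T_{\CP^2}\bigr),
\end{equation*}
so both the generation and the $\Hom$-vanishing reduce to statements on $\CP^2$.

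Generation is the soft part and holds for every thread. If $F\in\CD$ lies in the right orthogonal of each $\pi^*E_j$, then by adjunction $\RHom_{\CP^2}(E_j,\pi_*F)=0$ for all $j$; since a thread is a full exceptional collection it generates $D^b(\Coh\CP^2)$, whence $\pi_*F=0$, and conservativity of the affine pushforward forces $F=0$. As $X$ is smooth and each $\pi^*E_j$ is perfect, the vanishing of the right orthogonal shows that $T$ classically generates $\CD$.

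The content is the vanishing $\Ext^n_X(\pi^*E_a,\pi^*E_b)=0$ for $n\neq 0$, equivalently $\Ext^n_{\CP^2}(E_a,E_b\otimes\sym^{k}T_{\CP^2})=0$ for $n=1,2$ and all $k\ge 0$ (there is nothing in degrees $>2$ since $\dim\CP^2=2$). The case $k=0$ is precisely the strong exceptionality of the thread. For $k\ge 1$ I would resolve $\sym^{k}T_{\CP^2}$ by line bundles: taking the $k$-th symmetric power of the rank-one subbundle in the Euler sequence $0\to\CO\to\CO(1)^{\oplus 3}\to T_{\CP^2}\to 0$ yields
\begin{equation*}
0\longrightarrow \sym^{k-1}\!\bigl(\CO(1)^{\oplus 3}\bigr)\longrightarrow \sym^{k}\!\bigl(\CO(1)^{\oplus 3}\bigr)\longrightarrow \sym^{k}T_{\CP^2}\longrightarrow 0,
\end{equation*}
a two-term resolution of $\sym^{k}T_{\CP^2}$ by sums of $\CO(k-1)$ and $\CO(k)$. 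Tensoring with $E_b$ and running the long exact sequence of $\Ext_{\CP^2}(E_a,-)$ reduces everything to showing $\Ext^n_{\CP^2}(E_a,E_b(m))=0$ for all $n\ge 1$ and all $m\ge 0$, for each pair $a,b$ in the thread. By the Castelnuovo–Mumford criterion this is exactly the assertion that every coherent sheaf $E_a^{\vee}\otimes E_b$ is $0$-regular on $\CP^2$.

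This is where the helix enters and is the \emph{main obstacle}. Regularity of $E_a^{\vee}\otimes E_b$ is a one-sided condition on the twist, and it may fail for the given thread; passing to an adjacent thread in $\BH$ cyclically redistributes the Serre twist $\omega_{\CP^2}=\CO(-3)$ among the three slots via the helix relation $E_{j+3}=E_j(3)$, shifting the relevant regularities. I would argue that among the finitely many threads in one period (overall twists being irrelevant, as they cancel in $E_a^{\vee}\otimes E_b$) there is always one for which all nine sheaves $E_a^{\vee}\otimes E_b$ are simultaneously $0$-regular, using the stability and the numerical/mutation structure of exceptional bundles on $\CP^2$; in the line-bundle case this recovers the immediate check for $(\CO(j),\CO(j+1),\CO(j+2))$, where the differences lie in $\{\CO(-2),\dots,\CO(2)\}$. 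The delicate point is precisely this simultaneity — producing a single thread that regularizes all pairs at once, rather than pair-by-pair, while controlling how the $\Ext^1$ and $\Ext^2$ terms couple across the two-term resolution — and it is this step that genuinely uses the helix/Serre-functor bookkeeping rather than soft vanishing theorems.
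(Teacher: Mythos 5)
Your reduction steps are sound and coincide with the paper's: the generation argument via adjunction and conservativity of the affine pushforward is exactly Lemma \ref{gen}, and the reduction of $\Ext^n_X(\pi^*E_a,\pi^*E_b)$ to $\Ext^n_{\CP^2}(E_a,E_b(m))$ for $m\ge 0$ via $\pi_*\CO_X=\sym^{\bullet}\TT$ and the symmetric power of the Euler sequence is exactly Corollary \ref{sym_euler_2}. But the proposal stops at the one step that actually constitutes the content of the proposition. You correctly identify that one must produce a \emph{single} thread for which all pairs $(E_a,E_b)$ satisfy the twisted vanishing simultaneously, and then you write ``I would argue that among the finitely many threads in one period there is always one\dots using the stability and the numerical/mutation structure.'' That is an announcement, not an argument: no criterion is given for selecting the thread, and no proof is given that the selected thread satisfies the vanishing. (A secondary imprecision: the condition $H^n(E_a^{\vee}\otimes E_b(m))=0$ for $n\ge 1$, $m\ge 0$ is \emph{implied by} $0$-regularity but is not ``exactly'' equivalent to it, since $0$-regularity also demands vanishing at the negative twists $m=-1,-2$.)

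The paper closes this gap in two moves, both of which are missing from your proposal. First, the thread is selected by a pigeonhole argument on slopes: by Lemma \ref{inc} (which rests on stability of exceptional bundles and the Markov equation of Lemma \ref{markov}) the slopes are strictly increasing along the helix, and the helix relation $\mu(E_{i+3})=\mu(E_i)+3$ forces the three gaps $a_i=\mu(E_{i+2})-\mu(E_i)$, $i=0,1,2$, to be positive with $a_0+a_1+a_2=6$; hence some thread has $\mu(E_{i+2})-\mu(E_i)\le 2$. Second, for such a thread the vanishing $H^{k}(\CP^2,E_a^{*}\otimes E_b(n))=0$ for $k>0$, $n\ge 0$ is proved (Lemma \ref{vanish_4} and Proposition \ref{vanish3}) by restricting $E_a^{*}\otimes E_b$ to a line, using strongness of the collection (Theorem \ref{strong}) together with Serre duality and stability to show the Grothendieck splitting type on the line has all degrees $\ge -2$, and then inducting on the twist $n$ via the restriction sequence. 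Your outline points in the right direction, but without these two arguments the proof is not complete.
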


Then we introduce a quiver associated with $\BE$ which encodes the non-tilting information of $\BE$ in the above, and is called the secondary quiver of $\BE$. We denote it by $Q_{\BE}$. After analyzing possible types of the secondary quivers (which are all acyclic), we define a functor $\CF_{Q}$ from mod-$\BC Q_{\BE}$ to $\Coh X$, and prove the following theorem, which gives us additional tilting objects associated with $\BE$:
\begin{theorem}[=Theorem \ref{F_Q}]
    For any full and strong exceptional collection $\BE$ on $\CP^2$, $\CF_{Q}$ sends tilting objects in mod-$\BC Q_{\BE}$ to tilting objects in $D^b(\Coh X)$.
\end{theorem}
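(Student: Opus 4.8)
The plan is to verify the two defining properties of a tilting object directly for $\CF_Q(M)$ when $M$ is a tilting module over $\BC Q_{\BE}$: the self-orthogonality $\Hom_{D^b(\Coh X)}\bigl(\CF_Q M, \CF_Q M[n]\bigr)=0$ for $n\neq 0$, and classical generation of $D^b(\Coh X)$. I would reduce both statements to the indecomposable building blocks. Since $\BC Q_{\BE}$ is the path algebra of an acyclic quiver, it is hereditary, so $M$ decomposes as $M=\bigoplus_j M_j$ with $\Ext^{\neq 0}_{\BC Q_{\BE}}(M_j,M_k)=0$, and it suffices to control $\Hom_{D^b(\Coh X)}\bigl(\CF_Q M_j, \CF_Q M_k[n]\bigr)$ for all $j,k$. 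The core of the argument is therefore a comparison between the derived Homs on $X$ and the Ext groups over $\BC Q_{\BE}$.

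For the self-orthogonality, the essential computational input is the adjunction and projection formula for the affine morphism $\pi\colon X\to\CP^2$, using $\pi_*\CO_X=\sym^\bullet\TT_{\CP^2}$, which gives
\[
\Hom_{D^b(\Coh X)}\bigl(\pi^*A,\pi^*B[n]\bigr)\;\cong\;\bigoplus_{k\geq 0}\Ext^n_{\CP^2}\bigl(A,\,B\otimes\sym^k\TT_{\CP^2}\bigr).
\]
Realizing $\CF_Q(M)$ explicitly as a $\sym^\bullet\TT_{\CP^2}$-module, i.e.\ a coherent sheaf on $\CP^2$ equipped with a Higgs field assembled from the exceptional-collection data and the representation $M$, I would compute $\Hom_{D^b(\Coh X)}\bigl(\CF_Q M_j,\CF_Q M_k[n]\bigr)$ through a Koszul/hypercohomology complex built out of the displayed groups. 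The vanishing for $n\neq 0$ should then follow by combining three ingredients: (i) the strong exceptionality of $\BE$, which kills the bulk of the positive-degree $\Ext$ groups on $\CP^2$ coming from the $k=0$ term; (ii) the tilting hypothesis $\Ext^1_{\BC Q_{\BE}}(M_j,M_k)=0$, which is exactly what cancels the residual contribution of $\sym^{\geq 1}\TT_{\CP^2}$ by the very design of $Q_{\BE}$; and (iii) the $4$-Calabi-Yau Serre duality $\Hom_X(F,G[n])\cong\Hom_X(G,F[4-n])^*$, which halves the degrees to be checked and converts low-degree vanishing into high-degree vanishing.

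For generation, I would show that each pullback $\pi^*E_i$ in the thread of Proposition \ref{tilting} lies in the thick subcategory $\langle\CF_Q M\rangle$. Since $M$ is tilting over the hereditary algebra $\BC Q_{\BE}$, there is a short co-resolution $0\to\BC Q_{\BE}\to M^0\to M^1\to 0$ with $M^0,M^1\in\mathrm{add}(M)$; applying $\CF_Q$ (which is exact, or extends to the bounded derived categories) sends the projectives to the $\pi^*E_i$ and places them in $\langle\CF_Q M\rangle$. Because $\bigoplus_i\pi^*E_i$ is already a classical generator of $D^b(\Coh X)$ by Proposition \ref{tilting}, it follows that $\CF_Q M$ generates as well.

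The main obstacle is the Ext-vanishing of the second step, specifically controlling the contributions of the higher symmetric powers $\sym^k\TT_{\CP^2}$ with $k\geq 1$ to $\Hom_{D^b(\Coh X)}\bigl(\CF_Q M_j,\CF_Q M_k[n]\bigr)$. These are precisely the terms responsible for $\pi^*\bigl(\bigoplus_i E_i\bigr)$ failing to be tilting in general, and the whole point of constructing $Q_{\BE}$ and $\CF_Q$ is that the Higgs field encoded by a rigid representation $M$ annihilates them. Making this cancellation precise will require a case analysis over the finitely many types of secondary quivers (all acyclic), matching each arrow of $Q_{\BE}$ to a specific generator of the offending $\Ext$ group and checking that no uncancelled self-extension survives in degrees $n=1,2$, with the degrees $n=3,4$ then handled by Serre duality.
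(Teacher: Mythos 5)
Your overall strategy (self-orthogonality plus generation, with the key step being a comparison between $\Ext$ over $\BC Q_{\BE}$ and $\Ext$ on $X$) has the right shape, but the proposal has two genuine gaps. First, the central step --- showing that $\Ext^1_{\BC Q_{\BE}}(M,M)=0$ forces $\Ext^1_X(\CF_Q M,\CF_Q M)=0$ --- is exactly the part you defer to ``the main obstacle,'' and the mechanism you sketch (realize $\CF_Q(M)$ as a $\sym^{\bullet}\TT$-module with a Higgs field and cancel the $\sym^{k\geq 1}\TT$ contributions arrow-by-arrow) is not carried out and is not how the argument actually closes. The paper never needs the Higgs-field description: it uses that every $M$ in $\text{mod-}\BC Q_{\BE}$ sits in a canonical short exact sequence $0\to S_0^{m_0}\to M\to S_1^{m_1}\oplus S_2^{m_2}\to 0$ (Proposition \ref{class_of_2nd}), that $\CF_Q$ is exact with $\CF_Q(S_i)=T_i$, and then compares the lattices of long exact sequences obtained by applying $\Hom$ functors to this sequence and to its image $0\to T_0^{m_0}\to\CF_Q(M)\to T_1^{m_1}\oplus T_2^{m_2}\to 0$. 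The defining isomorphism of the secondary quiver, $\Ext^1_A(S_i,S_j)\cong\Ext^1_X(T_i,T_j)$, is what transports surjectivity of a connecting map $\wtil{\eta}$ on the algebra side (equivalent to $\Ext^1_A(M,M)=0$) to surjectivity of the corresponding map $\eta$ on $X$, which kills $\Ext^1_X(\CF_Q M,\CF_Q M)$. Some comparison of this kind is indispensable; without it your plan remains a plan.

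Second, your ingredient (iii) is invalid: $X$ is non-compact and the objects $\CF_Q(M)$, being iterated extensions of the $\pi^*E_i$, are not supported on the zero section, so the $4$-Calabi--Yau duality $\Hom_X(F,G[n])\cong\Hom_X(G,F[4-n])^*$ does not apply to them. Indeed $\Hom_X(\pi^*E_i,\pi^*E_i)\cong\bigoplus_{n\ge 0}\HO^0(\CP^2,E_i^*\otimes E_i\otimes\sym^n\TT)$ is infinite-dimensional while $\Ext^4_X(\pi^*E_i,\pi^*E_i)=0$, so the duality visibly fails; the degrees $n\geq 2$ must be handled directly, as in Lemma \ref{van_geq2}. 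A smaller slip: $\CF_Q$ sends the projectives $e_iA$ to the universal extensions $\wtil{T_i}$, not to $\pi^*E_i$ --- it is the \emph{simples} that go to $\pi^*E_i$. Your generation argument survives (the coresolution places $\wtil{T}$, which is already a tilting object by Theorem \ref{obj_tilt2}, inside $\langle\CF_Q M\rangle$; alternatively filter $M$ by simples to get each $\pi^*E_i$), but as written the step ``sends the projectives to the $\pi^*E_i$'' is false.
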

Here tilting objects in mod-$\BC Q_{\BE}$ means they are tilting objects when viewed as objects in $D^b(\BC Q_{\BE})$. 

Let $D_0^b(\Coh X)$ be the full subcategory of $D^b(\Coh X)$ consisting of objects with support on the zero section $\CP^2\subset X$, the equivalence (\ref{d_morita}) above determines a t-structure on $D_0^b(\Coh X)$ whose heart $\CB\subset D_0^b(\Coh X)$ is an abelian category which is equivalent to the category of nilpotent modules over $B  = \End_X T$. In the final chapter, we calculate simple tilts from a given one. The calculations involve an interesting autoequivalence on $D^b(\Coh X)$ which is induced by the Mukai flop (see Proposition \ref{swap}), first studied by Namikawa \cite{Nam03} and Kawamata \cite{Kaw02}.

We remark that the combinatorics of simple tilts are complicated, and not controlled entirely by the mutations of exceptional collections.

\section*{Notation}
For a complex variety $V$, we write $D^b(V) := D^b(\Coh V)$ the bounded derived category of coherent sheaves on $V$. By sheaf on $V$ we mean coherent $\CO_V$-module. We write $D^b(A)$ for the bounded derived category of right $A$-modules over a notherian $\BC$-algebra $A$. We write $\CO(i)$, $\Omega$, $\TT$ the Serre twisted, cotangent and tangent sheaves on $\CP^2$.

\section*{Acknowledgements}
This work forms part of the author's thesis. He would like to thank his supervisor Tom Bridgeland for  the guidance and generosity on sharing the insights on this topic, as well as his advisor Evgeny Shinder for his continued supports. He would like to thank Nebojsa Pavic and the members in Sheffield algebraic geometry and mathematical physics group for the helpful discussions. The project is sponsored by the China Scholarship Council (CSC) and the University of Sheffield.

\section{t-structures, Helices and Mutations}
In this chapter we assume $\CD$ is a triangulated category such that 
\begin{enumerate}
	\item $\CD$ is a $\BC$-linear category: the $\Hom$ spaces are $\BC$-linear spaces.
	\item $\CD$ is of finite type, i.e. for any two objects $A$, $B$ of $\CD$ the vector space
	$$
		\bigoplus_{i\in \mathbb{Z}}\Hom^i_{D} (A,B)
	$$
	is finite-dimensional.
	\item $\CD$ is algebraic in the sense of Keller\cite{Keller94}.
	\item $\CD$ is saturated \cite{BK}, i.e., all triangulated functors
	$$
	    \CD \longrightarrow D(\BC) , \quad \CD^{op}\longrightarrow D(\BC),
	$$
	are representable.
\end{enumerate}

\subsection{}\label{t_str}
Let $(D^{\leq 0}, D^{\geq 0})$ be a bounded t-structure on $D$. Any such t-structure is determined by its heart  $D^{\heartsuit} = D^{\leq 0}\cap D^{\geq 0}$, which is a full abelian subcategory. For a bounded heart $\CA$ we shall use  $\HO^i_{\CA}(-)$ to indicate the cohomological functor with respect to the heart. A heart will be called finite-length if it is artinian and noetherian as an abelian category.

The following definition comes from Happel-Reiten-Smalo \cite{HRS}.
\begin{definition}[Torsion pair]
Let $\CA$ be a heart of some t-structure in the triangulated category $\CD$. A pair of full subcategories $(\TT,\CF)$ of $\CA$ is called a torsion pair in $\CA$ if it satisfies the following conditions
   \begin{enumerate}
       \item $\Hom_{\CA}(T,F) = 0$ for $T\in \TT$ and $F\in \CF$;
       \item for any object $A\in \CA$, there exist $M\in \TT$ and $N\in \CF$ such that they fit into a short exact sequence
       $$
        \xymatrix{
         0 \ar[r] & M \ar[r] & A \ar[r] & N \ar[r] & 0.
        }
       $$
   \end{enumerate}
\end{definition}

 The following theorem was proved in \cite[Proposition 2.1]{HRS}.
 \begin{theorem}[Happel-Reiten-Smalo]
Let $(\TT,\CF)$ be a torsion pair in a heart $\CA$. Let
    \begin{eqnarray*}
        \CA^{\sharp} &:=& \left\{ E\in D\ | \ \HO^1_{\CA}(E)\in \TT,\ \HO^0_{\CA}(E)\in \CF, \ \HO^i_{\CA}(E) = 0 \text{ for }i\neq0,\ 1 \right\}, \\
        \CA^{\flat} &:=& \left\{ E\in D \ | \ \HO^{-1}_{\CA}(E) \in \CF,\ \HO^0_{\CA}(E)\in \TT,\ \HO^i_{\CA}(E) = 0 \text{ for }i\neq-1,\ 0 \right\},
    \end{eqnarray*}
    then both $\CA^{\sharp}$ and $\CA^{\flat}$ are hearts of some new bounded t-structures in $\CD$.
 \end{theorem}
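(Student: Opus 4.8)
The plan is to prove the statement for $\CA^{\flat}$ and then deduce the case of $\CA^{\sharp}$ for free, since a direct comparison of cohomology degrees shows $\CA^{\sharp} = \CA^{\flat}[-1]$, and the shift of a heart of a bounded t-structure is again such a heart. To produce the t-structure with heart $\CA^{\flat}$, I would exhibit its two aisles explicitly, namely
\[
\CD^{\flat,\leq 0} = \{E \in \CD : \HO^i_{\CA}(E) = 0 \text{ for } i>0,\ \HO^0_{\CA}(E)\in \TT\}, \qquad \CD^{\flat,\geq 1} = \{E \in \CD: \HO^i_{\CA}(E) = 0 \text{ for } i<0,\ \HO^0_{\CA}(E)\in \CF\},
\]
and verify the three defining axioms of a bounded t-structure; one checks directly that $\CD^{\flat,\leq 0}\cap \CD^{\flat,\geq 1}[1] = \CA^{\flat}$, so that $\CA^{\flat}$ is the resulting heart.

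The shift-stability axiom ($\CD^{\flat,\leq 0}[1]\subseteq \CD^{\flat,\leq 0}$ and dually) is immediate from $\HO^i_{\CA}(E[1]) = \HO^{i+1}_{\CA}(E)$ together with $0\in\TT$ and $0\in\CF$. For the orthogonality axiom I would first reduce a $\Hom$ between the aisles to a $\Hom$ inside $\CA$: if $X\in\CD^{\flat,\leq 0}$ and $Y\in\CD^{\flat,\geq 1}$ then $X\in\CD^{\leq 0}$ and $Y\in\CD^{\geq 0}$ for the original t-structure, and the standard truncation long exact sequences collapse $\Hom_{\CD}(X,Y)$ onto $\Hom_{\CA}(\HO^0_{\CA}(X),\HO^0_{\CA}(Y))$. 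Since $\HO^0_{\CA}(X)\in\TT$ and $\HO^0_{\CA}(Y)\in\CF$, this group vanishes by axiom (1) of the torsion pair.

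The substantial step — and the one I expect to be the main obstacle — is the decomposition axiom: every $E\in\CD$ must sit in a triangle $A\to E\to B$ with $A\in\CD^{\flat,\leq 0}$ and $B\in\CD^{\flat,\geq 1}$. I would build this from the original truncation together with the torsion pair. Start from the canonical triangle $\tau^{\leq 0}E\to E\to \tau^{\geq 1}E$ and the short exact sequence $0\to t\to \HO^0_{\CA}(E)\to f\to 0$ in $\CA$ splitting the degree-zero cohomology into its torsion part $t\in\TT$ and torsion-free part $f\in\CF$. Composing the canonical map $\tau^{\leq 0}E\to \HO^0_{\CA}(E)$ with the quotient $\HO^0_{\CA}(E)\to f$ gives a morphism whose fibre $A$ has $\HO^i_{\CA}(A)=\HO^i_{\CA}(E)$ for $i<0$, $\HO^0_{\CA}(A)=t\in\TT$, and vanishing cohomology in positive degrees, so $A\in\CD^{\flat,\leq 0}$. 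Applying the octahedral axiom to the composite $A\to\tau^{\leq 0}E\to E$ then produces the cone $B$, fitting in a triangle $f\to B\to\tau^{\geq 1}E$; reading off its cohomology shows $\HO^i_{\CA}(B)=0$ for $i<0$, $\HO^0_{\CA}(B)=f\in\CF$, and $\HO^i_{\CA}(B)=\HO^i_{\CA}(E)$ for $i\geq 1$, so $B\in\CD^{\flat,\geq 1}$ as required. The delicate points here are verifying that the octahedron really yields the claimed triangle and that the long exact cohomology sequences isolate $t$ and $f$ in the correct degrees without interference from the neighbouring truncations.

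Finally, boundedness of the new t-structure is inherited from that of the original one, since the cohomological conditions cutting out $\CD^{\flat,\leq 0}$ and $\CD^{\flat,\geq 1}$ bound the $\HO^i_{\CA}$-support of every object. Having established that $\CA^{\flat}$ is a bounded heart, the relation $\CA^{\sharp}=\CA^{\flat}[-1]$ immediately yields that $\CA^{\sharp}$ is one as well.
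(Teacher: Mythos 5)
Your proposal is correct. The paper does not prove this statement itself but simply cites Happel--Reiten--Smalo, and your argument is essentially the standard proof from that source: one exhibits the two aisles explicitly, checks shift-stability and orthogonality (the latter reducing to $\Hom_{\CA}(\HO^0_{\CA}(X),\HO^0_{\CA}(Y))=0$ for $\HO^0_{\CA}(X)\in\TT$, $\HO^0_{\CA}(Y)\in\CF$), and obtains the decomposition axiom by combining the old truncation triangle with the torsion-pair decomposition of $\HO^0_{\CA}(E)$ via the octahedral axiom, exactly as you describe.
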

 A special case of the tilting construction will be particularly important. Suppose that $\CA$ is a finite-length heart and $S\in \CA$ is a simple object. Let $\langle S\rangle$ be the full subcategory consisting of objects $E\in \CA$ all of whose simple factors are isomorphic to $S$. Define the full subcategories 
 $$
    S^{\perp} := \{E\in \CA \ | \ \Hom_{\CA}(S, E) = 0\}, \quad {}^{\perp}S:= \{E\in\CA \ |\ \Hom_{\CA}(E, S) = 0\}.
 $$
 Then we can either view $(\langle S \rangle, S^{\perp})$  or $(^{\perp} S, \langle S \rangle )$ as a torsion pair. Then we can define new tilted hearts
 \begin{equation} \label{rltilt}
     R_S \CA := \langle S[1], ^{\perp} S \rangle, \quad L_S \CA := \langle S^{\perp}, S[-1] \rangle,
 \end{equation}
 which we refer to as the right and left tilts of the heart $\CA$ at the simple $S$. It is easy to see that $S[-1]$ is a simple object of $L_S \CA$ and that if the category if of finite-length, then $R_{S[-1]} L_S \CA = \CA$. Similarly, if $R_S \CA$ is of finite-length then $L_{S[1]} R_S \CA = \CA$, see Figure \ref{fig:mut-hearts}.

 \begin{figure}
\begin{center}
\begin{tikzpicture}[scale=0.4]
\draw (0,4)--(13,4);
\draw (0,9)--(13,9);
\draw (1.5,4)--(1.5,9);
\draw (4,4)--(4,9);
\draw (9.25,4)--(9.25,9);
\draw (11.75,4)--(11.75,9);
\draw (2.75,6.5) node {$S[1]$};
\draw (6.5,6.5) node {$^\perp\!S$};
\draw (10.5,6.5) node {$S$};
\draw [decorate, decoration={brace,amplitude=5pt}] (4,9.5)--(11.75,9.5)
node [midway, above=6pt] {$\CA$};
\draw [decorate, decoration={brace,amplitude=5pt}] (9.25,3.5)--(1.5, 3.5)
node [midway, below=6pt] {$R_S\CA$};
\end{tikzpicture}
\quad \quad
\begin{tikzpicture}[scale=0.4]
\draw (0,4)--(13,4);
\draw (0,9)--(13,9);
\draw (1.5,4)--(1.5,9);
\draw (4,4)--(4,9);
\draw (9.25,4)--(9.25,9);
\draw (11.75,4)--(11.75,9);
\draw (2.75,6.5) node {$S$};
\draw (6.5,6.5) node {$S^\perp$};
\draw (10.5,6.5) node {$S[-1]$};
\draw [decorate, decoration={brace,amplitude=5pt}] (1.5,9.5)--(9.25,9.5)
node [midway, above=6pt] {$\CA$};
\draw [decorate, decoration={brace,amplitude=5pt}] (11.75,3.5)--(4, 3.5)
node [midway, below=6pt] {$L_S \CA$};
\end{tikzpicture}
\caption{Left and right tilts of a heart.}\label{fig:mut-hearts}
\end{center}
\end{figure}
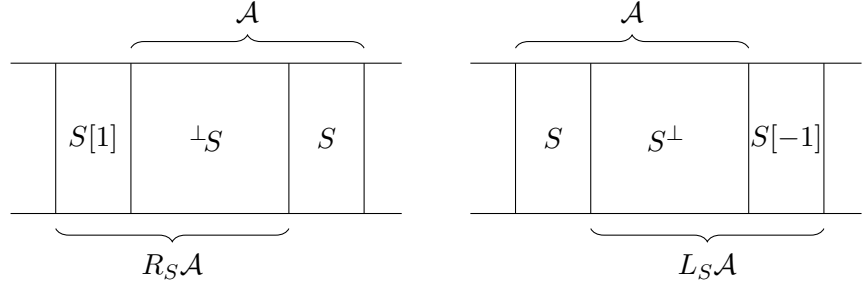

 The following lemma will be useful in Section\ref{cal_tilt}.
 \begin{lemma}\label{aut_sim_tilt}
    The operation of tilting commutes with the action of the group of autoequivalences on the set of t-structures: take an autoequivalence $\Phi\in\Aut (\CD)$. Let $\CA\in \CD$ be a heart of some t-structure and of finite-length, $S\in \CA$ be a simple object. Then we have 
    $$
        \Phi(L_S\CA) = L_{\Phi(S)} \Phi(\CA).
    $$
 \end{lemma}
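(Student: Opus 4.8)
The plan is to treat the statement as a formal consequence of the fact that a triangulated autoequivalence preserves every piece of structure entering the definition of the left tilt: shifts, exact triangles, $\Hom$-spaces, and hence t-structures, their hearts, the finite-length property, and simple objects. So the proof reduces to tracking $\Phi$ through each ingredient of $L_S\CA = \langle S^\perp, S[-1]\rangle$.

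First I would record the basic structural facts. Since $\Phi$ is a triangulated equivalence, it carries the bounded t-structure $(\CD^{\leq 0}, \CD^{\geq 0})$ with heart $\CA$ to the pair $(\Phi\CD^{\leq 0}, \Phi\CD^{\geq 0})$, which is again a bounded t-structure because $\Phi$ sends the defining $\Hom$-vanishing and truncation triangles to ones of the same shape; its heart is $\Phi\CA$. Moreover $\Phi$ intertwines the cohomology functors, $\HO^i_{\Phi\CA}\circ\Phi \cong \Phi\circ \HO^i_\CA$, so $\Phi$ restricts to an exact equivalence of abelian categories $\CA \xrightarrow{\sim}\Phi\CA$. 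An exact equivalence preserves short exact sequences, hence simple objects and the finite-length property; in particular $\Phi\CA$ is a finite-length heart and $\Phi(S)$ is a simple object of it, so the right-hand side $L_{\Phi(S)}\Phi\CA$ is defined.

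Next I would identify the image of the perpendicular subcategory. For objects $E$ of the heart, full faithfulness of $\Phi$ gives natural isomorphisms $\Hom_\CA(S,E)=\Hom_\CD(S,E)\cong\Hom_\CD(\Phi S,\Phi E)=\Hom_{\Phi\CA}(\Phi S,\Phi E)$; combined with essential surjectivity of $\Phi\colon\CA\to\Phi\CA$ this shows $\Phi(S^\perp)=(\Phi S)^\perp$, the right-hand perpendicular being computed inside $\Phi\CA$. Together with $\Phi(S[-1])\cong\Phi(S)[-1]$, all the generating data of the tilt is accounted for. Writing $\langle\CX,\CY\rangle$ for the subcategory of objects $E$ admitting a triangle $X\to E\to Y\to X[1]$ with $X\in\CX$ and $Y\in\CY$, and using that $\Phi$ preserves triangles and shifts, I would then assemble
$$\Phi(L_S\CA)=\Phi\langle S^\perp, S[-1]\rangle=\langle \Phi(S^\perp), \Phi(S)[-1]\rangle=\langle (\Phi S)^\perp, (\Phi S)[-1]\rangle=L_{\Phi(S)}\Phi\CA,$$
as required.

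The one place demanding genuine care is the first step: establishing that $\Phi$ descends to an exact equivalence of hearts sending simples to simples and preserving finite length. Everything after that is bookkeeping, so the cohomological intertwining $\HO^i_{\Phi\CA}\circ\Phi\cong\Phi\circ\HO^i_\CA$—or, equivalently, a direct check that $\Phi$ maps the truncation triangles of one t-structure to those of the other—is the technical heart of the argument.
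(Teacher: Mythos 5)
Your argument is correct and is essentially the paper's own proof, which simply applies $\Phi$ to the defining formula $L_S\CA=\langle S^{\perp},S[-1]\rangle$ and uses that an autoequivalence preserves hearts, simples, perpendiculars, shifts and extension closures; you have merely spelled out the bookkeeping that the paper leaves implicit. No gap.
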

\begin{proof}
By (\ref{rltilt}), we have $\Phi(L_S\CA) = \left\langle \left(\Phi(S)\right)^{\perp}, \Phi(S)[-1]\right\rangle = L_{\Phi(S)}\Phi(\CA)$.
\end{proof}
Given a heart of bounded t-structure $\CA\in \CD$, we denote by $\Sim \CA$ the complete set of non-isomorphic simple objects in $\CA$.
\begin{proposition}\label{sim_tilt}
    Assume $\Sim \CA$ is finite and $\CA$ is finite-length. Let $S\in \CA$ such that $\Ext_{\CA}^1(S,S) = 0$, then after a left or right simple tilting, the new simple objects are:
    \begin{eqnarray}
        \Sim L_S\CA &=& \{ S[-1]\} \ \cup \ \{\phi_S(X): X\in \Sim \CA,\ X\neq S\} \\
        \Sim R_S\CA &=& \{S[1] \} \ \cup \ \{\psi_S(X): X\in \Sim \CA,\ X\neq S\}
    \end{eqnarray}
    where 
    \begin{eqnarray*}
        \phi_S(X) &=& \cone\ \left(S[-1]\otimes \Ext^1(S,X) \longrightarrow X\right),\\
        \psi_S(X) &=& \cone\ \left(X\longrightarrow S[1]\otimes \Ext^1(X,S)^*\right)[-1].
    \end{eqnarray*}
\end{proposition}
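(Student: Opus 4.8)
The plan is to establish the statement for the left tilt $L_S\CA$ in full and then obtain the right tilt by the dual argument (replacing $\Hom_{\CA}(S,-)$ by $\Hom_{\CA}(-,S)$, subobjects by quotients, and the shift $[-1]$ by $[1]$), since $\psi_S$ is the evident opposite-category dual of $\phi_S$. Throughout I will use that $L_S\CA=\langle S^{\perp},S[-1]\rangle$ is the Happel--Reiten--Smal\o{} tilt $\CA^{\sharp}$ of the torsion pair $(\TT,\CF)=(\langle S\rangle,S^{\perp})$, so that every $E\in L_S\CA$ has $\HO^0_{\CA}(E)\in S^{\perp}$, $\HO^1_{\CA}(E)\in\langle S\rangle$, and $\HO^i_{\CA}(E)=0$ otherwise; dually $L_S\CA$ carries the tilted torsion pair $(S^{\perp},\langle S[-1]\rangle)$, and is again of finite length because each object is an extension of a finite-length object of $S^{\perp}$ by a finite-length object of $\langle S[-1]\rangle$. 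The hypothesis $\Ext^1_{\CA}(S,S)=0$ enters immediately: it forces $\langle S\rangle$ to be the semisimple category of $S$-isotypic objects, so $\langle S[-1]\rangle\subset L_S\CA$ is a semisimple Serre subcategory whose only simple is $S[-1]$.

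First I would build the candidate simples. For $X\in\Sim\CA$ with $X\neq S$, I read $\Ext^1_{\CA}(S,X)=\Hom_{\CD}(S[-1],X)$ and take $\phi_S(X)$ to be the cone of the canonical evaluation map, so that $\phi_S(X)$ is the universal extension. Running the $\CA$-cohomology long exact sequence of the defining triangle $S[-1]\otimes\Ext^1(S,X)\to X\to\phi_S(X)\to S\otimes\Ext^1(S,X)$ shows $\HO^1_{\CA}(\phi_S(X))=0$ and that $\phi_S(X)=\HO^0_{\CA}(\phi_S(X))$ sits in a short exact sequence $0\to X\to\phi_S(X)\to S\otimes\Ext^1(S,X)\to 0$ in $\CA$. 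Feeding this into $\Hom_{\CA}(S,-)$, the connecting map $\Hom_{\CA}(S,S\otimes\Ext^1(S,X))\to\Ext^1_{\CA}(S,X)$ is the identity by the universal property; this yields $\Hom_{\CA}(S,\phi_S(X))=0$, so $\phi_S(X)\in S^{\perp}\subset L_S\CA$, and, using $\Ext^1_{\CA}(S,S)=0$ to kill the remaining term, $\Ext^1_{\CA}(S,\phi_S(X))=0$. The latter reads $\Hom_{L_S\CA}(S[-1],\phi_S(X))=0$: in $L_S\CA$ the object $\phi_S(X)$ has no $S[-1]$-subobject.

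The crux is simplicity of $\phi_S(X)$ in $L_S\CA$. Given a short exact sequence $0\to A\to\phi_S(X)\to B\to 0$ in $L_S\CA$, I would run the $\CA$-cohomology sequence using that $\phi_S(X)$ is concentrated in $\CA$-degree $0$; this forces $B$ into degree $0$ and produces a four-term exact sequence $0\to\HO^0_{\CA}(A)\to\phi_S(X)\to B\to\HO^1_{\CA}(A)\to 0$ in $\CA$ with $\HO^0_{\CA}(A),B\in S^{\perp}$ and $\HO^1_{\CA}(A)\in\langle S\rangle$. If $\HO^0_{\CA}(A)=0$ then $A\cong\HO^1_{\CA}(A)[-1]\in\langle S[-1]\rangle$, and the monomorphism $A\hookrightarrow\phi_S(X)$ together with $\Hom_{L_S\CA}(S[-1],\phi_S(X))=0$ forces $A=0$. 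If $\HO^0_{\CA}(A)\neq 0$, then since every nonzero $\CA$-subobject of $\phi_S(X)$ contains $X$ (there are no $S$-subobjects, as $\Hom_{\CA}(S,\phi_S(X))=0$), the quotient $\phi_S(X)/\HO^0_{\CA}(A)$ is a quotient of $S\otimes\Ext^1(S,X)$ and so lies in $\langle S\rangle$; hence $B$, an extension of $\HO^1_{\CA}(A)\in\langle S\rangle$ by an object of $\langle S\rangle$, lies in $\langle S\rangle\cap S^{\perp}=0$, collapsing the sequence to $A=\phi_S(X)$. This case analysis, and in particular the repeated use of $\Ext^1_{\CA}(S,S)=0$ both for the semisimplicity of $\langle S\rangle$ and for $\langle S\rangle\cap S^{\perp}=0$, is the step I expect to be most delicate.

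Finally I would prove exhaustion. By the tilted torsion pair $(S^{\perp},\langle S[-1]\rangle)$, every simple of $L_S\CA$ lies in $S^{\perp}$ or in $\langle S[-1]\rangle$, and the latter contributes only $S[-1]$. For a simple $Y\in S^{\perp}$ of $L_S\CA$, pick a simple $\CA$-subobject $X\hookrightarrow Y$ ($\CA$ finite length); since $\Hom_{\CA}(S,Y)=0$ we get $X\neq S$, and simplicity of $Y$ in $L_S\CA$ gives $\Ext^1_{\CA}(S,Y)=0$. As $X\to Y$ sends the universal extension class into $\Ext^1_{\CA}(S,Y)=0$, applying $\Hom_{\CD}(-,Y)$ to the defining triangle shows it factors through a nonzero map $\phi_S(X)\to Y$, which, being a nonzero morphism of simples in $L_S\CA$, is an isomorphism. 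Distinctness of the $\phi_S(X)$, and from $S[-1]$, follows by reading off $X$ as the unique non-$S$ $\CA$-composition factor of $\phi_S(X)$ and noting $\phi_S(X)\in\CA$ while $S[-1]\notin\CA$. The dual argument, with ${}^{\perp}S$, $S[1]$ and the co-universal extension $\psi_S(X)$, yields the statement for $R_S\CA$.
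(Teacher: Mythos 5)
The paper does not actually prove this proposition; it simply cites \cite[Proposition 5.4]{KQ15}, so there is no in-paper argument to compare against. Your proposal is a correct, self-contained proof along the standard lines of that reference: you realise $L_S\CA$ as the HRS tilt at the torsion pair $(\langle S\rangle, S^{\perp})$, verify that the universal extension $\phi_S(X)$ lands in $S^{\perp}$ with $\Hom(S,\phi_S(X))=\Ext^1(S,\phi_S(X))=0$, prove its simplicity by the $\HO^0_{\CA}(A)=0$ versus $\HO^0_{\CA}(A)\neq 0$ dichotomy, and get exhaustion by lifting a simple $\CA$-subobject $X\hookrightarrow Y$ through the defining triangle using $\Ext^1_{\CA}(S,Y)=0$. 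I checked the delicate steps (every nonzero $\CA$-subobject of $\phi_S(X)$ contains $X$; $B\in\langle S\rangle\cap S^{\perp}=0$; a nonzero map of simples in $L_S\CA$ is an isomorphism) and they all go through. Two cosmetic remarks: the fact that $\langle S\rangle\cap S^{\perp}=0$ needs only that every nonzero object of $\langle S\rangle$ has $S$ as a subobject, not $\Ext^1_{\CA}(S,S)=0$; and your one-line justification that $L_S\CA$ is again finite-length is too quick as stated (subobjects in $L_S\CA$ are not subobjects in $\CA$), but nothing in your argument actually uses that claim, so it can simply be dropped.
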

\begin{proof}
    We refer our reader to \cite[Proposition 5.4]{KQ15} for the proof.
\end{proof}

\subsection{Exceptional collections}
\begin{definition}[Exceptional collections] \label{exc_coll}
An object $E$ in $\CD$ is said to be exceptional if 
$$
	\Hom^k_{\CD}(E,E) = \left\{ \begin{array}{cc}
								\BC,&\  \text{if } k=0,\\
								0, &\  \text{otherwise}
								\end{array}
							\right.
$$
An exceptional collection ${\BE}\subset \CD$ is a sequence of exceptional objects 
$$
	{\BE}=(E_0,\cdots,E_n)
$$
such that for all $0\leq i<j\leq n$, we have $\Hom^{\bullet}_{\CD}(E_j,E_i) = 0$.
\end{definition}
An exceptional collection $\BE = (E_0,\cdots, E_n)$ is said to be strong if for all $i,\ j$ 
$$
	\Hom^k_{\CD}(E_i,E_j) = 0,\ \ \text{unless } k=0.
$$

We write $\langle\BE\rangle \subset \CD$ for the smallest full triangulated subcategory of $\CD$ containing the elements of an exceptional collection $\BE\subset \CD$. An exceptional  collection $\BE$ is said to be full if $\langle \BE \rangle = \CD$.

From the definitions above, we have that for a full and strong exceptional collection $\BE$, then $\bigoplus_{i=0}^{n} E_i$ is a tilting object in $\CD$.
Given an exceptional collection $\BE$ in  $\CD$, the right orthogonal subcategory to $\BE$ is the full triangulated subcategory 
$$
	\BE^{\bot}=\left\{ X\in \CD: \ \Hom^{\bullet}_{\CD}(E,X) = 0\ \text{for } E\in \BE\right\}.
$$
Similarly, the left orthogonal subcategory to $\BE$ is 
$$
	{}^{\bot} \BE = \left\{ X\in \CD: \ \Hom^{\bullet}_{\CD}(X,E) = 0\ \text{for } E\in \BE\right\}.
$$
 $\langle \BE\rangle$ is admissible due to \cite[Theorem 3.2]{B89}, i.e. the inclusion functor $i: \langle \BE \rangle \rightarrow \CD$ has left and right adjoint functors. Thus the fullness of $\BE$ is equivalent to $\BE^{\bot} = 0$ or ${}^{\bot} \BE = 0$.

\subsection{Mutation of exceptional collection}
We suppose $E\in \CD$ to be exceptional. Given an object $X\in {}^{\bot}E$, the left mutation of $X$ through $E$ is the object $L_E(X)\in E^{\bot}$ defined up to isomorphism by the triangle
$$
	\xymatrix{
		L_E(X) \ar[r]& \Hom^{\bullet}_D(E,X)\otimes E \ar[r]^-{ev} & X \ar[r]&L_E(X)[1],
		}
$$
where $ev$ denotes the evaluation map. Similarly, given $X\in E^{\bot}$, the right mutation of $X$ through $E$ is the object $R_E X \in {}^{\bot} E$ defined by the triangle
$$
	\xymatrix{
     X \ar[r]^-{coev} & \Hom^{\bullet}_D(X,E)^* \otimes E \ar[r] &	R_E(X) \ar[r] &  X[1],
	}
$$
where $coev$ denotes the coevaluation map. Then these two operations define mutually inverse equivalences of categories (see \cite[Appendix B]{BriS10})
$$
	\xymatrix{
		{}^{\bot}E \ar@/^/[rr]^{L_E} & & E^{\bot} \ar@/^/[ll]^{R_E}
	}
$$

\begin{definition-lemma}[Dual objects] \label{dual_coll}
Let $\BE = (E_0,\cdots, E_n)$ be a full exceptional  collection and define 
$$
	F_j = L_{E_0} \cdots L_{E_{j-1}} (E_j)[j],\ \ 0\leq j \leq n.
$$
Then $F_j$ is called dual object to $E_j$ and satisfies the following property:
$$
	\Hom^k_{\CD}(E_i,F_j) = \left\{ \begin{array}{cc}
							\BC & \text{if}\ i=j \ \text{and} \ k=0,\\
							0 & \text{otherwise.}
						\end{array}
						\right.
$$
\end{definition-lemma}
\begin{proof}
One can directly compute $R_{E_j} E_i =  E_i$ for $j<i$ and $R_{E_i} E_i = 0$. Then the $\Hom$ space when $j\leq i$ will be 
\begin{eqnarray*}
	\Hom^k_{\CD}(E_i,F_j) &=& \Hom^k_{\CD}\left(E_i, L_{E_0} \cdots L_{E_{j-1}} (E_j)[j]\right)\\
					&=& \Hom^{k-j}_{\CD}\left(R_{E_{j-1}} \cdots R_{E_0} E_i, E_j\right) \\ 
					&=& \Hom^{k-j}_{\CD}(E_i, E_j)\\
					&=&\left\{ \begin{array}{cc}
							\BC & i=j \ \text{and} \ k=0,\\
							0  & \text{otherwise.}
						\end{array}
						\right.
\end{eqnarray*}
When $j>i$ we have $\Hom^k_{\CD}(E_i,F_j) = 0$ for any $k$ from the above calculations.

\end{proof}

\begin{definition}[Standard mutation]
Given a full exceptional collection $\BE = (E_0,\cdots, E_n)$, the mutation operation $\sigma_i$ for each $0<i\leq n$ is defined by the rule
\begin{eqnarray*}
	&&\sigma_i(E_0,\cdots, E_{i-2}, E_{i-1},E_i, E_{i+1},\cdots, E_n) \\
	&=&(E_0,\cdots, E_{i-2}, L_{E_{i-1}}(E_i),E_{i-1}, E_{i+1},\cdots, E_n)
\end{eqnarray*}
Then $\sigma_i$ take exceptional collections to exceptional collections. And it takes full collections to full collections.
\end{definition}
%\begin{theorem}[Bondal, Gorodentsev, Rudakov]
%The operations $\sigma_1,\cdots, \sigma_n$ satisfy the braid group relations
%\begin{eqnarray*}
%	\sigma_i\circ \sigma_{i+1}\circ \sigma_i &=& \sigma_{i+1}\circ \sigma_i \circ \sigma_{i+1},\\
%	\sigma_i\circ \sigma_j &=& \sigma_j\circ \sigma_i \ \ \text{if } |j-i| >1, 
%\end{eqnarray*}
%and hence generate an action of the $n$-string braid group $\mathrm{Br}_n$ on the set of exceptional collections.
%\end{theorem}

\subsection{Mutation of Helices}
Under our assumptions for category $\CD$, according to  \cite[Corollary 3.5]{BK} $\CD$ has a Serre functor $S_D$. By the definition of the Serre functor, we have that $S_D$ is an autoequivalence of $\CD$ and there are given bi-functorial isomorphisms
$$  
    \phi_{E,G}: \Hom_{\CD}(E,G) \longrightarrow \Hom_{\CD}\left(G,S_{\CD}(G)\right)^* 
$$
for $E,\ G \in \CD$, with the following property: the composition 
$$
(\phi^{-1}_{F(E),F(G)})^*\circ \phi_{E,G}:\Hom(E,G)\rightarrow \Hom\left(G,S_{\CD}(E)\right)^* \rightarrow \Hom_{\CD}\left(S_{\CD}(E), S_{\CD}(F)\right)
$$
coincides with the isomorphism induced by $S_{\CD}$. A motivating example of the Serre functor is the following: when $\CD = D^b(Y)$ for smooth projective variety $Y$, then\\ $S_{\CD} := -\otimes \omega_Y[-\dim Y]$.
\begin{definition}[Helix]
A sequence of objects $\BH = (E_i)_{i\in \BZ}$ in $\CD$ is a helix if there exist positive integers $(n,d)$ such that 
\begin{enumerate}
	\item for each $i\in \BZ$ the corresponding thread $(E_{i},\cdots, E_{i+n})$ is a full exceptional collection,
	\item for each $i\in \BZ$ one has $E_{i-n-1} = S_{\CD}(E_i) [-d]$.
\end{enumerate}
\end{definition}

Given a full exceptional $\BE$, we can generate a helix using the Serre functor $S_{\CD}$ in the way by defining $E_{i-m(n+1)} := (S_{\CD})^m (E_i)[-dm]$ for integer $m$. Similarly we have  the mutation for helices: given a helix $\BH = (E_j)_{j\in \BZ}$ and an integer $i$ modulo $n+1$ , then we can get a new helix $\sigma_i(\BH) = \BH' = (E'_j)_{j\in\BZ}$:
$$
	E'_j = \left\{ 
		\begin{array}{cc}
			E_{j-1} & \text{if } j=i \text{ mod } n+1\\
			L_{E_j}(E_{j+1}) & \text{if } j=i-1 \text{ mod } n+1\\
			E_j & \text{otherwise}
		\end{array}
		\right.
$$
We want to ask which property will be preserved under mutation, for example, strong exceptional collection will not be preserved under mutation.
\begin{definition}[Geometric helix]
	A helix $\BH=(E_i)_{i\in\BZ}$ is called geometric if for any $i<j$ we have
	$$
		\Hom^k_{\CD}(E_i,E_j) = 0,\ \ k\neq 0.
	$$
	Equivalently, for each thread $(E_i,\cdots, E_{i+n})$ we have
	$$
		\Hom^k_{\CD}\left(E_i, S^{-l}_{\CD}(E_j)[ld]\right) = 0, \quad \text{for } k\neq 0, \ l\geq 0,\ i<j
	$$
\end{definition}
Given a $t$-structure $(\CD^{\leq 0},\CD^{\geq 0})$, a helix $\BH=(E_i)_{i\in\BZ}$ is called pure if each element $E_i$ is contained in the heart $\CD^{\heartsuit}$.

We have the following important results \cite[Theorem 2.3 and Lemma 2.5]{BP}.
\begin{theorem}[Bondal-Polishchuk] \label{mut_pres}
The pureness and geometricity of a helix of type $(n,n)$ are preserved under mutations.
\end{theorem}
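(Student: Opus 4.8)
The plan is to reduce the statement to a single elementary left mutation and then run a long-exact-sequence argument inside the heart, using the hypothesis $d=n$ to control the terms that cross between consecutive threads. \textbf{Reduction.} The mutation $\sigma_p$ replaces, in every period simultaneously, the adjacent pair $(E_{p-1},E_p)$ by $(L_{E_{p-1}}(E_p),E_{p-1})$ and leaves the other objects of each thread untouched. Because the helix is Serre-periodic, $E_{i-n-1}=S_{\CD}(E_i)[-n]$, and $S_{\CD}$ is an autoequivalence commuting with the formation of mutations, it is enough to verify both properties for the finitely many objects of one period; the Serre functor then transports the conclusion across all of $\BH$. The inverse mutation $\sigma_p^{-1}$ is governed by the analogous right-mutation triangle, so it suffices to treat left mutations.

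\textbf{Geometricity.} Set $V=\Hom^\bullet_{\CD}(E_{p-1},E_p)$. Since $p-1<p$, geometricity of $\BH$ forces $V$ into degree $0$, so the defining triangle reads
\begin{equation*}
L_{E_{p-1}}(E_p)\longrightarrow V\otimes E_{p-1}\xrightarrow{\ ev\ } E_p\longrightarrow L_{E_{p-1}}(E_p)[1],
\end{equation*}
with $V$ an ordinary vector space. To test geometricity of $\BH'=\sigma_p(\BH)$ I would apply $\Hom^\bullet_{\CD}(-,E'_b)$ and $\Hom^\bullet_{\CD}(E'_a,-)$ to this triangle for the relevant $a<b$: the two non-mutated vertices $V\otimes E_{p-1}$ and $E_p$ are original helix objects, so every outer term of the resulting long exact sequence is a graded $\Hom$ that geometricity of $\BH$ already concentrates in degree $0$, forcing $\Hom^k_{\CD}(E'_a,E'_b)=0$ for $k\neq 0$. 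The only delicate case is when $E'_a$ and $E'_b$ lie in different threads; there I rewrite $E'_b$ as $S_{\CD}^{-l}(\cdot)[ln]$ of an object in the thread of $E'_a$ and invoke the Serre-shifted form of geometricity. This is exactly where $d=n$ enters: the shift $[ln]$ produced by $l$ applications of $S_{\CD}^{-1}$ matches the cohomological degree in which the surviving $\Hom$ sits, so the higher $\Ext$'s cancel instead of accumulating.

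\textbf{Pureness.} Fix a bounded t-structure with heart $\CA$ containing every $E_i$. Since $d=n$, the periodicity $S_{\CD}(E_i)[-n]=E_{i-n-1}\in\CA$ shows $S_{\CD}$ maps $\CA$ into $\CA[n]$, so again it suffices to place the two new objects of one period in $\CA$. As $E'_p=E_{p-1}\in\CA$ already, the issue is $E'_{p-1}=L_{E_{p-1}}(E_p)$. Applying $\HO^\bullet_{\CA}$ to the triangle above, and using that $V\otimes E_{p-1},E_p\in\CA$ and that $ev$ is a morphism of $\CA$, gives $\HO^0_{\CA}(L_{E_{p-1}}(E_p))=\ker(ev)$ and $\HO^1_{\CA}(L_{E_{p-1}}(E_p))=\operatorname{coker}(ev)$, with all other cohomology vanishing. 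Hence $L_{E_{p-1}}(E_p)\in\CA$ precisely when $ev$ is an epimorphism in $\CA$.

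\textbf{The crux.} The remaining and, I expect, hardest point is the surjectivity of $ev\colon V\otimes E_{p-1}\to E_p$. The strategy is to show $\operatorname{coker}(ev)=\HO^1_{\CA}(L_{E_{p-1}}(E_p))$ vanishes by pairing $L_{E_{p-1}}(E_p)$, which lies in $E_{p-1}^{\bot}$, against the helix through Serre duality: the type $(n,n)$ condition identifies a non-zero cokernel with a non-zero graded $\Hom$ in a degree $k\neq 0$ between genuine helix objects sitting $n+1$ places apart, which contradicts the geometricity of $\BH$ established above. Concretely this is the abstract counterpart of the Euler sequence $0\to\Omega^1(1)\to\CO^{\oplus(n+1)}\to\CO(1)\to 0$ on $\CP^n$, the prototype of a type $(n,n)$ mutation, where the evaluation of global sections is surjective exactly because the bundle is positive. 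Once surjectivity is secured, both new objects lie in $\CA$, so $\BH'$ is pure; combined with the geometricity step and the reduction, both properties are preserved under an arbitrary mutation.
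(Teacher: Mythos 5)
The paper does not prove this statement at all: it is quoted verbatim from Bondal--Polishchuk, with a citation to \cite[Theorem 2.3 and Lemma 2.5]{BP} standing in for the argument. So the only question is whether your reconstruction is sound, and it has two genuine gaps, both located exactly where the real content of the Bondal--Polishchuk result lives.

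First, the geometricity step does not close. Applying $\Hom^{\bullet}_{\CD}(E_a,-)$ to the triangle $L_{E_{p-1}}(E_p)\to V\otimes E_{p-1}\xrightarrow{ev} E_p\to L_{E_{p-1}}(E_p)[1]$ for $a<p-1$ gives
$$\Hom_{\CD}(E_a,E_{p-1})\otimes V\longrightarrow \Hom_{\CD}(E_a,E_p)\longrightarrow \Hom^1_{\CD}(E_a,L_{E_{p-1}}(E_p))\longrightarrow \Hom^1_{\CD}(E_a,V\otimes E_{p-1})=0,$$
so the vanishing of the outer terms in nonzero degrees does \emph{not} force $\Hom^1$ of the mutated object to vanish: it identifies it with the cokernel of the composition map $\Hom(E_a,E_{p-1})\otimes\Hom(E_{p-1},E_p)\to\Hom(E_a,E_p)$. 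Surjectivity of these composition maps for a geometric helix of type $(n,n)$ is precisely the nontrivial input (in the prototype $\CP^n$ it is the surjectivity of multiplication $H^0(\CO(a))\otimes H^0(\CO(b))\to H^0(\CO(a+b))$), and it is the substance of the cited Lemma 2.5 of Bondal--Polishchuk; your argument silently assumes it. Second, you explicitly leave the pureness crux --- the surjectivity of $ev$ in the heart, equivalently $\HO^1_{\CA}(L_{E_{p-1}}(E_p))=0$ --- as a ``strategy'' rather than a proof, and the Serre-duality pairing you sketch is not carried out. Since both unfinished points are instances of the same missing surjectivity statement, the proposal as written reproduces the formal bookkeeping around the theorem but not its mathematical core.
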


Finally we give some useful properties of exceptional objects in $D^b(\CP^2)$.

Here we recall the conception of (semi)stability of sheaves on $\CP^n$: fix an ample divisor on $\CP^n$, here we can choose the hyperplane $H$, then the slope of a pure sheaf (see \cite[Definition 1.1.2]{HL10}) $\CF$ is defined to be 
$$
	\mu(F) := \frac{c_1(\CF).H^{n-1}}{r(\CF)},
$$
where $r(\CF)$ is the rank of $\CF$.
\begin{definition}[(Semi)stability]
A pure sheaf $\CF$ on $\CP^n$ is called stable (resp. semi-stable), if for any subsheaf $\mathcal{E}$ with strictly smaller rank than $\CF$, then we have 
$$
	\mu(\mathcal{E}) < \mu(\CF) \ \text{(resp. } \mu(\mathcal{E}) \leq \mu(\CF) \text{)}
$$
\end{definition}
The following useful lemma is proved in\cite[Theorem 4.1]{GR87} and \cite[Proposition 2.9]{KO94}.
\begin{lemma}\label{stab_free}
	If $E$ is an exceptional object on $\CP^2$, then $E$ is a shift of a locally free and stable sheaf.
\end{lemma}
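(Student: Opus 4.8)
The plan is to prove the statement in three stages: (i) that $E$ is concentrated in a single cohomological degree, so that $E \cong F[m]$ for a coherent sheaf $F$; (ii) that $F$ is locally free; and (iii) that $F$ is stable. Two standard features of $\CP^2$ will be used repeatedly: it has cohomological dimension $2$, so $\Ext^p_{\CP^2}(-,-)=0$ unless $0\le p\le 2$, and Serre duality gives $\Ext^p(G,H)\cong \Ext^{2-p}(H,G(-3))^*$.

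For stage (i) I would let $a\le b$ be the extreme integers with $A:=\HO^a(E)\ne 0$ and $B:=\HO^b(E)\ne 0$, and study the local-to-global spectral sequence
\[
    E_2^{p,q}=\bigoplus_{i}\Ext^p_{\CP^2}\bigl(\HO^i(E),\HO^{i+q}(E)\bigr)\ \Longrightarrow\ \Hom^{p+q}_{\CD}(E,E),
\]
whose abutment is $\BC$ in degree $0$ and vanishes otherwise. The idea is to show that more than one nonzero cohomology sheaf is incompatible with this abutment. The $E_2^{0,0}$-term is $\bigoplus_i\End(\HO^i(E))$, which carries the idempotents $e_i=\mathrm{id}_{\HO^i(E)}$, while the extreme terms $\Hom(B,A)$ and $\Ext^2(A,B)$ (the latter Serre-dual to $\Hom(B,A(-3))$) govern the lowest and highest total degrees. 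I expect \emph{this stage to be the main obstacle}: a naive ``corner'' argument only shows that the outermost differentials are injective or surjective, not that the cross terms vanish, so one must genuinely exploit the algebra structure of $\RHom(E,E)=\BC$ — the edge homomorphism $\End_{\CD}(E)\to\bigoplus_i\End(\HO^i(E))$ forces the only liftable idempotent to be $\sum_i e_i$ — together with the Serre pairing between the rows $p=0$ and $p=2$, to rule out any nontrivial gluing between distinct cohomology degrees. This purity is exactly the content extracted from \cite{GR87}, and carrying it out carefully is the heart of the argument.

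Granting stage (i), we have $E\cong F[m]$ with $\RHom(F,F)=\BC$, i.e. $\End(F)=\BC$ and $\Ext^1(F,F)=\Ext^2(F,F)=0$; in particular $F$ is simple, hence indecomposable. For stage (ii) I would first rule out torsion: a nonzero torsion subsheaf of $F$ has nonzero self-extensions and, via local duality, contributes a nonzero summand to $\Ext^2(F,F)$, contradicting rigidity, so $F$ is torsion-free. The canonical map $F\to F^{\vee\vee}$ to the reflexive hull then has zero-dimensional cokernel $Q$; since reflexive sheaves on a smooth surface are locally free, $F^{\vee\vee}$ is a bundle, and if $Q\ne 0$ local duality again produces a nonzero class in $\Ext^2(F,F)$. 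Hence $Q=0$ and $F=F^{\vee\vee}$ is locally free.

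Finally, for stage (iii), from $\RHom(F,F)=\BC$ we read off $\chi(F,F)=1$. Writing $r$ for the rank and $\Delta(F)=\tfrac1r\bigl(c_2(F)-\tfrac{r-1}{2r}c_1(F)^2\bigr)$ for the Bogomolov discriminant, Riemann–Roch on $\CP^2$ gives $\chi(F,F)=r^2\bigl(1-2\Delta(F)\bigr)$, so $\Delta(F)=\tfrac12\bigl(1-r^{-2}\bigr)<\tfrac12$. To conclude I would argue by contradiction: were $F$ not stable, a Jordan–Hölder or Harder–Narasimhan analysis yields a short exact sequence $0\to F'\to F\to F''\to 0$ of semistable pieces with $\mu(F')\ge\mu(F'')$, and the Euler-characteristic formula $\chi(G,H)=r_Gr_H\bigl(P(\mu_H-\mu_G)-\Delta_G-\Delta_H\bigr)$ with $P(x)=\tfrac12 x^2+\tfrac32 x+1$, combined with the slope inequality, the bound $\Delta(F)<\tfrac12$, and the vanishing of $\Hom$ and $\Ext^2$ between the factors, forces a nonzero class in $\Ext^1(F,F)$ — contradicting rigidity. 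This is the remaining substantive computation, and it is where the rigidity of $E$ does the essential work, in line with \cite{KO94}.
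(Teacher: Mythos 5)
The paper does not actually prove this lemma: it is quoted from the literature, with the proof attributed to \cite[Theorem 4.1]{GR87} and \cite[Proposition 2.9]{KO94}. So the relevant comparison is between your sketch and those references, and measured against them your proposal is a sensible roadmap (purity $\Rightarrow$ local freeness $\Rightarrow$ stability is indeed how the result decomposes) but it is not a proof. The decisive gap is the one you flag yourself: stage (i). The corner terms of the spectral sequence only give $\Hom(B,A)=0$ and $\Ext^2(A,B)=0$, and the idempotent/edge-homomorphism idea does not close the argument — it only shows that the identities of the various $\HO^i(E)$ must be glued by the single differential $d_2\colon E_2^{0,0}\to\bigoplus_i\Ext^2(\HO^iE,\HO^{i-1}E)$, which forces these $\Ext^2$ groups to be large rather than producing a contradiction. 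The actual proofs of purity for exceptional objects on $\CP^2$ (and del Pezzo surfaces) in \cite{GR87,KO94} do not proceed by such a formal $\RHom$-algebra argument; they lean on the classification of exceptional sheaves via mutations/constructibility and on stability. Declaring this step ``the content extracted from \cite{GR87}'' amounts to assuming the hard part.

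Stages (ii) and (iii) also contain asserted-but-unjustified steps. In (ii), ``a nonzero torsion subsheaf contributes a nonzero summand to $\Ext^2(F,F)$'' does not follow from local duality alone: one has a surjection $\Ext^2(F,T)\twoheadrightarrow\Ext^2(T,T)$ but the map $\Ext^2(F,T)\to\Ext^2(F,F)$ need not be injective, so a diagram chase is required; moreover any correct argument must use something special to $\CP^2$, since on other del Pezzo surfaces there \emph{are} torsion exceptional sheaves (e.g.\ $\CO_\ell(k)$ on a $(-1)$-curve), so rigidity of the torsion part cannot be a purely formal consequence. In (iii), the Riemann--Roch identities you quote are correct, but the deduction ``instability forces a class in $\Ext^1(F,F)$'' needs the vanishing of $\Hom$ and $\Ext^2$ between the Harder--Narasimhan factors ($\Ext^2(F',F'')=\Hom(F'',F'(-3))^*$ only vanishes if $\mu(F')-\mu(F'')<3$, which is not automatic) and an argument that the resulting $\Ext^1(F'',F')$ actually injects into $\Ext^1(F,F)$ rather than being killed by the connecting map from $\End(F')$. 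Each of these can be repaired, but as written the proposal is an outline of the literature's strategy with the essential verifications, above all purity, left open.
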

A good proof of the following result is given by Bondal and Polishchuk \cite[Example 3.2]{BP}.
\begin{lemma}[Gorodentsev, Rudakov] \label{markov}
If $(E_0,E_1,E_2)$ is a full and strong exceptional collection in $D^b(\CP^2)$, then the positive integers $(a,b,c)$ defined by $a= \dim \Hom_{D^b(\CP^2)}(E_0,E_1)$, $b = \dim \Hom_{D^b(\CP^2)} (E_1,E_2)$ and $c=\dim \Hom_{D^b(\CP^2)}(E_0,E_2)$ satisfy the Markov equation:
$$
	a^2+b^2+c^2 = abc.
$$
\end{lemma}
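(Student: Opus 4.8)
The plan is to work entirely on the level of the Grothendieck group $K(\CP^2)\cong\BZ^3$ and its Euler pairing $\chi(E,F)=\sum_i(-1)^i\dim\Ext^i_{\CP^2}(E,F)$, and to read off the Markov relation from the trace of the Serre operator. First I would record the Gram matrix of $\chi$ in the basis $\bigl([E_0],[E_1],[E_2]\bigr)$. Fullness of $\BE$ guarantees these three classes generate $K(\CP^2)$, and since $K(\CP^2)$ is free of rank $3$ they form a basis. Exceptionality gives $\chi(E_i,E_i)=1$ and $\chi(E_j,E_i)=0$ for $i<j$, while strongness forces $\Ext^k(E_i,E_j)=0$ for $k\neq0$, so that $\chi(E_0,E_1)=a$, $\chi(E_1,E_2)=b$, $\chi(E_0,E_2)=c$ are exactly the dimensions in the statement. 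Hence the Gram matrix is the upper unitriangular matrix
$$
M=\begin{pmatrix} 1 & a & c \\ 0 & 1 & b \\ 0 & 0 & 1 \end{pmatrix}.
$$

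Next I would bring in Serre duality on $\CP^2$. Since $\dim\CP^2=2$ and $\omega_{\CP^2}=\CO(-3)$, the Serre functor is $S=-\otimes\CO(-3)[2]$, and because the shift $[2]$ acts trivially on $K$, the induced operator on $K(\CP^2)$ is $\sigma:=-\otimes\CO(-3)$. Serre duality then reads $\chi(x,y)=\chi(y,\sigma x)$ for all $x,y\in K(\CP^2)$. Writing $\chi(x,y)=x^{T}My$ and comparing with $\chi(y,\sigma x)=y^{T}M\sigma x=x^{T}\sigma^{T}M^{T}y$ yields $M=\sigma^{T}M^{T}$, i.e.
$$
\sigma=M^{-1}M^{T}.
$$
A direct computation of $M^{-1}M^{T}$ (using the explicit inverse $M^{-1}$, which is again unitriangular with entries $-a,-b,ab-c$) gives
$$
\mathrm{tr}\,\sigma=\mathrm{tr}\bigl(M^{-1}M^{T}\bigr)=3-\bigl(a^2+b^2+c^2\bigr)+abc .
$$
I note that the precise form $M^{-1}M^{T}$ versus $M^{T}M^{-1}$ or its inverse is immaterial, as all of these are conjugate or inverse to one another and hence share the same trace.

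Finally I would compute $\mathrm{tr}\,\sigma$ a second way. The operator $\sigma=-\otimes\CO(-3)$ is \emph{unipotent} on $K(\CP^2)$: tensoring by a line bundle preserves the topological (codimension-of-support) filtration on $K(\CP^2)$ and induces the identity on the associated graded, so all eigenvalues of $\sigma$ equal $1$. Therefore $\mathrm{tr}\,\sigma$ equals the rank of $K(\CP^2)$, namely $3$. Equating the two expressions, $3-(a^2+b^2+c^2)+abc=3$, gives the Markov equation $a^2+b^2+c^2=abc$. The positivity of $a,b,c$ is a separate and easier point: strongness together with the exceptional ordering forces complete orthogonality between any two members with vanishing $\Hom$, which would contradict fullness (equivalently, one invokes Lemma \ref{stab_free} to see each $E_i$ is an honest stable bundle and the consecutive $\Hom$-spaces are nonzero). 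I expect the only genuinely delicate step to be pinning down the sign conventions in the Serre-duality identity $\chi(x,y)=\chi(y,\sigma x)$ and hence the correct placement of $\sigma$; everything else is either the structural normal form of $M$ or the clean unipotence argument for $\mathrm{tr}\,\sigma=3$.
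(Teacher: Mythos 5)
Your argument is correct, and it is worth noting that the paper itself offers no proof of this lemma at all: it simply defers to Bondal--Polishchuk \cite[Example 3.2]{BP}. So you have supplied a self-contained derivation where the paper has only a citation. Your mechanism is the standard one underlying that reference: the three classes $[E_i]$ form a basis of $K(\CP^2)\cong\BZ^3$ by fullness, the Gram matrix $M$ of the Euler form is unitriangular with off-diagonal entries $a,b,c$ by strongness and exceptionality, Serre duality in the form $\chi(x,y)=\chi(y,\sigma x)$ with $\sigma=-\otimes\CO(-3)$ identifies $\sigma$ with $M^{-1}M^{T}$, and I confirm your trace computation: the diagonal entries of $M^{-1}M^{T}$ are $1-a^2+abc-c^2$, $1-b^2$, $1$, summing to $3-(a^2+b^2+c^2)+abc$. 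The unipotence of $\sigma$ (via the coniveau filtration, or just $\operatorname{ch}(\CO(-3))=e^{-3H}$) forces this trace to equal $3$, yielding the Markov equation. The one soft spot is your closing remark on positivity: full orthogonality of two members alone does not immediately contradict fullness, and Lemma \ref{stab_free} does not by itself give nonvanishing of consecutive $\Hom$'s. The clean route is the one the paper itself uses in Lemma \ref{inc}: once the Markov identity is established for the nonnegative integers $a,b,c$, the vanishing of any one of them forces $a=b=c=0$, whence $D^b(\CP^2)$ would decompose into completely orthogonal pieces, contradicting the connectedness of $\CP^2$ \cite[Example 3.2]{Bri99}. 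With that adjustment your proof is complete, and it has the merit of making explicit an argument the paper leaves entirely to the literature.
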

Using the above lemmas, we can compare the slopes between bundles in an exceptional collection in $D^b(\CP^2)$:
\begin{lemma}\label{inc}
Let $\BE=(E_0,E_1,E_2)$ be a full and strong exceptional collection consisting of sheaves in  $D^b(\CP^2)$, then for $j>i$, we have $\mu(E_j) > \mu(E_i)$.
\end{lemma}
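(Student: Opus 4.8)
The plan is to combine the structural classification of exceptional objects on $\CP^2$ with the elementary slope inequalities for (semi)stable sheaves. By Lemma \ref{stab_free} each $E_i$ is a shift of a stable locally free sheaf; since the collection is assumed to consist of honest sheaves (concentrated in degree $0$), each $E_i$ is in fact a stable vector bundle, in particular $\mu$-semistable and torsion-free. This is the input that converts categorical $\Hom$-vanishing and non-vanishing into numerical statements about slopes, so I would establish it first.

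Next I would record that for every pair $i<j$ the space $\Hom_{\CP^2}(E_i,E_j)$ is nonzero. Indeed, Lemma \ref{markov} asserts that $a,b,c$ are \emph{positive} integers, so $\Hom(E_0,E_1)$, $\Hom(E_1,E_2)$ and $\Hom(E_0,E_2)$ are all nonzero, and these exhaust the pairs with $i<j$. Fixing such a pair together with a nonzero morphism $f\colon E_i\to E_j$, I set $I=\Img f$. Since $I$ is a nonzero quotient of the semistable bundle $E_i$ one has $\mu(I)\geq\mu(E_i)$, and since $I$ is a nonzero subsheaf of the semistable bundle $E_j$ one has $\mu(I)\leq\mu(E_j)$; chaining these gives the non-strict inequality $\mu(E_i)\leq\mu(E_j)$.

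To upgrade to a strict inequality I would argue by contradiction, assuming $\mu(E_i)=\mu(E_j)$. Then $\mu(I)=\mu(E_i)=\mu(E_j)$, and the \emph{stability} (not merely semistability) of both bundles forces $f$ to be an isomorphism: a proper nonzero quotient of the stable bundle $E_i$ has slope strictly larger than $\mu(E_i)$, so $\ker f=0$ and $f$ is injective; dually a proper nonzero subsheaf of the stable bundle $E_j$ has slope strictly smaller than $\mu(E_j)$, so $I=E_j$ and $f$ is surjective. Hence $E_i\cong E_j$. But then $\Hom_{\CP^2}(E_j,E_i)\cong\Hom_{\CP^2}(E_i,E_i)=\BC\neq0$, contradicting the defining vanishing $\Hom^\bullet(E_j,E_i)=0$ for $i<j$ in an exceptional collection. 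Therefore $\mu(E_i)<\mu(E_j)$.

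The slope bookkeeping in the middle step is routine; the only delicate point is the equal-slope case, where I must use the full strength of stability together with torsion-freeness to exclude subsheaves or quotients of both smaller and full rank, and then exploit the antisymmetry built into the exceptional collection to reach a contradiction. I expect this equal-slope step to be the main obstacle, while everything else is a direct application of Lemmas \ref{stab_free} and \ref{markov}.
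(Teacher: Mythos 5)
Your proof is correct, and it reaches the conclusion by a genuinely different logical route than the paper, even though the stability ingredients overlap. The paper argues by contradiction: assuming $\mu(E_j)\leq\mu(E_i)$ for some $j>i$, it uses exactly your two stable-bundle observations (equal slopes force an isomorphism, which exceptionality forbids; a strictly larger source slope forbids any map) to conclude $\Hom_{\CP^2}(E_i,E_j)=0$ for that pair, then feeds this into the Markov equation $a^2+b^2+c^2=abc$ to deduce $a=b=c=0$, and finally derives a contradiction from the resulting semiorthogonal decomposition $D^b(\CP^2)=\langle E_0\rangle\oplus\langle E_1,E_2\rangle$, which would force $\CP^2$ to be disconnected. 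You instead run the argument forwards: you extract the non-vanishing of all three $\Hom$ spaces directly from the word ``positive'' in the statement of Lemma \ref{markov}, and then the image-of-a-nonzero-map slope comparison plus the equal-slope rigidity finishes the job. Both are valid, but note where the burden sits: the paper only uses the Markov \emph{equation} (to propagate a single vanishing to all three) and supplies its own indecomposability argument, so it does not rely on the positivity assertion; your proof does rely on it, and the positivity of $(a,b,c)$ in the Gorodentsev--Rudakov result is itself usually established by essentially the connectedness argument the paper writes out. So your route is shorter but outsources the key non-vanishing to the cited lemma, while the paper's is self-contained on that point. Your handling of the equal-slope case is slightly more careful than the paper's (you track the image sheaf and invoke torsion-freeness explicitly), which is a genuine plus.
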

\begin{proof}
	If not, we suppose $\mu(E_j)\leq \mu(E_i)$ for some $j > i$. Then we claim that $\Hom_{\CP^2}(E_i,E_j) = 0$ since
	\begin{enumerate}
		\item if $\mu(E_j) = \mu(E_i)$. Any non-trivial homomorphism between stable bundles of equal slopes must be an isomorphism. However, since $\Hom_{\CP^2}(E_j,E_i) = 0$  by definition of an exceptional sequence, then we could only have $\Hom(E_{i}, E_j) = 0$;
		\item if $\mu(E_j) < \mu(E_i)$, then $\Hom_{\CP^2}(E_i, E_{j}) = 0$ since there is no map between stable bundles from the one with bigger slope to the smaller slope.
	\end{enumerate}
 Then by Lemma \ref{markov}, we have $a= b = c = 0$. Thus there is no map among $E_0$, $E_1$ and $E_2$.
	
	Since the exceptional collection is full, we see that $D^b(\CP^2)$ can be decomposed into two subcategories $\langle     E_0\ran$ and $\lan E_1,\ E_2\ran$. But by \cite[Example 3.2]{Bri99}, this would imply that $\CP^2$ is not connected, which is absurd.
\end{proof}
The following known result will be useful to us:
\begin{theorem} \label{strong}
Let $\BE = (E_0,E_1,E_2)$ be a full exceptional collection which consists of sheaves in $D^b(\CP^2)$. Then $\BE$ is strong.
\end{theorem}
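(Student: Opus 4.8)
The plan is to verify the strong condition $\Hom^k(E_i,E_j)=0$ for $k\neq 0$ one pair at a time. For $i>j$ this is immediate, since by the defining property of an exceptional collection the whole graded space $\Hom^{\bullet}(E_j,E_i)$ already vanishes; for $i=j$ it holds because each $E_i$ is exceptional. So the theorem reduces to proving, for every $i<j$, that $\Ext^1(E_i,E_j)=0$ and $\Ext^2(E_i,E_j)=0$, these being the only higher groups that can survive on a surface. Throughout I would use that each $E_i$ is a stable locally free sheaf (Lemma \ref{stab_free}) with strictly increasing slopes $\mu(E_0)<\mu(E_1)<\mu(E_2)$ (Lemma \ref{inc}); note that the slope inequality itself only needs stability together with the connectedness argument, not strongness.

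The vanishing of $\Ext^2$ is the soft half. Serre duality on $\CP^2$ gives $\Ext^2(E_i,E_j)\cong\Hom(E_j,E_i(-3))^*$, and since $E_j$ and $E_i(-3)$ are stable with $\mu(E_j)>\mu(E_i)>\mu(E_i(-3))$, the absence of nonzero maps from a stable bundle to a stable bundle of strictly smaller slope forces this group to vanish. Hence $\Ext^2(E_i,E_j)=0$ for all $i<j$, and the Euler form reduces to $\chi(E_i,E_j)=\dim\Hom(E_i,E_j)-\dim\Ext^1(E_i,E_j)$. I would also record the bound $\mu(E_j)-\mu(E_i)<3$: the helix attached to $\BE$ has Serre period $E_{-1}=S_{\CD}(E_2)[-2]=E_2(-3)$, and applying the slope ordering to the adjacent thread $(E_2(-3),E_0,E_1)$ yields $\mu(E_2)-3<\mu(E_0)$, so that $\mu(E_j)-\mu(E_i)\le\mu(E_2)-\mu(E_0)<3$.

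The real work is $\Ext^1(E_i,E_j)=0$. Here I would invoke the rigidity of exceptional pairs on $\CP^2$ from the Gorodentsev--Rudakov and Drézet--Le Potier theory of exceptional bundles: for an exceptional pair of bundles the total complex $\Hom^{\bullet}(E_i,E_j)$ is concentrated in a single cohomological degree. Combined with the $\Ext^2$-vanishing above, that degree is either $0$ or $1$. To exclude degree $1$ --- which would mean $\Hom(E_i,E_j)=0$, $\Ext^1(E_i,E_j)\neq 0$, hence $\chi(E_i,E_j)<0$ --- I would compute $\chi(E_i,E_j)$ by Riemann--Roch, using the slope bound $0<\mu(E_j)-\mu(E_i)<3$ together with the discriminant identity characterising exceptional bundles (equivalently $\chi(E_i,E_i)=1$) to conclude $\chi(E_i,E_j)>0$. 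Positivity of the Euler characteristic then pins the concentration in degree $0$, giving $\Ext^1(E_i,E_j)=0$ and, with the $\Ext^2$-vanishing, the strong condition.

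The step I expect to be the genuine obstacle is the single-degree concentration of $\Hom^{\bullet}(E_i,E_j)$: there is no slope or dimension obstruction preventing a nonzero map and a nonsplit extension from coexisting between two stable bundles, so excluding this really requires the fine classification of exceptional bundles on $\CP^2$ rather than soft semistability. The Riemann--Roch positivity is more routine but still uses the exceptional discriminant identity in an essential way, since without $\chi(E_i,E_i)=1$ the sign of $\chi(E_i,E_j)$ is not forced by the slopes alone.
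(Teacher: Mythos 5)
Your proposal is correct and follows essentially the same route as the paper: both arguments rest on the Gorodentsev--Rudakov/Kuleshov--Orlov rigidity result that $\Hom^{\bullet}(E_i,E_j)$ for an exceptional pair on $\CP^2$ is concentrated in a single degree, rule out degree $2$, and then use the Riemann--Roch formula for exceptional sheaves together with the slope ordering $\mu(E_j)>\mu(E_i)$ to get $\chi(E_i,E_j)>0$ and hence concentration in degree $0$. The only (harmless) differences are that you rederive the $\Ext^2$-vanishing from Serre duality and stability rather than quoting it from \cite[Corollary 2.11]{KO94}, and that the upper bound $\mu(E_j)-\mu(E_i)<3$ you record is not actually needed, since the quadratic $x^2+3x$ is already positive for all $x>0$.
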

\begin{proof}
By \cite[Corollary 2.11]{KO94}, for $i<j$ there is at most one $\Ext^k_{\CP^2}(E_i, E_j)$ group which doesn't vanish, and such $k \neq 2$. So either $\Hom_{\CP^2}(E_i,E_j) \neq 0$ or $\Ext^1_{\CP^2}(E_i,E_j) \neq 0$. By using Riemann-Roch theorem for exceptional sheaves, we have
$$
    \chi(E_i,E_j) = \frac{1}{2}r(E_i)r(E_j) \left( (\mu(E_i)-\mu(E_j) )^2+ 3(\mu(E_j)-\mu(E_i))+\frac{1}{r^2(E_i)}+\frac{1}{r^2(E_j)} \right).
$$
By Lemma \ref{inc}, $\mu(E_j)> \mu(E_i)$ when $j>i$. Thus $\chi(E_i,E_j) >0$, which implies $\Ext^1_{\CP^2}(E_i,E_j) = 0$.
\end{proof}

\section{Proof of Main theorem I}
First we show for an object which classically generates the derived category of $\CP^2$, then the pull back of such object under bundle projection map $\pi:X = \Tot \Omega_{\CP^2} \rightarrow \CP^2$ will classically generate $D^b(X)$. We say $T\in D^b(X) $ is a generator in the sense of \cite[Section 2.1]{BVdB}, if $\Hom^{\bullet}_D(T, A) = 0$ implies $A = 0$ in $D^b(X)$.
\begin{lemma}\label{gen}
Let $E$ be an object which classically generates $D^b(\CP^2)$, then $\pi^* E$ classically generates $D^b(X)$.
\end{lemma}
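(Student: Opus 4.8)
The plan is to avoid proving classical generation directly by resolutions, and instead to reduce it to the weak (compact) generator condition recorded just before the statement, then quote the standard compact-generation machinery. First I would record the ambient facts: $X=\Tot\Omega_{\CP^2}$ is smooth, separated and of finite type over $\BC$, so $\operatorname{Perf}(X)=D^b(\Coh X)$ equals the subcategory $D_{qc}(X)^{c}$ of compact objects, and $D_{qc}(X)$ is compactly generated (Neeman, Bondal--Van den Bergh \cite{BVdB}). Since $\CP^2$ is smooth, $E$ is perfect, and $\pi$ is flat, the pullback $\pi^*E$ is perfect, hence compact. By Neeman's theorem that the thick closure of a single compact generator is exactly the subcategory of compact objects, it therefore suffices to prove that $\pi^*E$ is a generator of $D_{qc}(X)$ in the weak sense of \cite{BVdB}, i.e. that $\Hom^{\bullet}_{D(X)}(\pi^*E,A)=0$ forces $A=0$ for every $A\in D_{qc}(X)$.

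The heart of the argument is adjunction. The morphism $\pi$ is an affine bundle projection, so $\pi_*$ is exact on quasi-coherent sheaves and $R\pi_*=\pi_*$; in particular $\pi^*\dashv\pi_*$ on $D_{qc}$. Given $A$ with $\Hom^{\bullet}_{D(X)}(\pi^*E,A)=0$, adjunction gives $\Hom^{\bullet}_{D(\CP^2)}(E,\pi_*A)=0$. Now $E$ classically generates $D^b(\CP^2)=\operatorname{Perf}(\CP^2)=D_{qc}(\CP^2)^{c}$, hence is a compact generator of $D_{qc}(\CP^2)$, so this vanishing forces $\pi_*A=0$. Finally, because $\pi$ is affine the functor $\pi_*$ is faithful and exact on $\qcoh$, so it is conservative on $D_{qc}$: from $\HO^i(\pi_*A)=\pi_*\HO^i(A)=0$ for all $i$ we get $\HO^i(A)=0$, whence $A=0$. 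This establishes weak generation and, together with the first paragraph, classical generation.

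The main obstacle I would watch is the non-properness of $\pi$: the fibres are affine planes, so $\pi$ is not proper and $\pi_*\CO_X=\sym^{\bullet}\TT$ is only quasi-coherent; thus $\pi_*$ does not preserve $\Coh$, and the whole argument must be run inside $D_{qc}(X)$ rather than $D^b(\Coh X)$. This is precisely why I prefer the abstract route to a hands-on resolution of an arbitrary coherent sheaf by pullbacks $\pi^*(\bigoplus\CO(-n_j))$: such a resolution does exist, but truncating it after $\dim X=4$ steps leaves a locally free syzygy on $X$ that need not itself be a pullback from $\CP^2$, so the direct approach does not obviously terminate inside $\langle\pi^*E\rangle$. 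The two external inputs to verify carefully are that a classical generator of $\operatorname{Perf}$ is a compact generator of $D_{qc}$ on a smooth variety, and the Neeman--Thomason identification of $\langle\pi^*E\rangle_{\mathrm{thick}}$ with the compacts; both hold in our setting and are what upgrade the easy adjunction computation to the desired classical generation.
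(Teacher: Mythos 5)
Your argument is correct and is essentially the same as the paper's: both reduce classical generation to weak (compact) generation of the big quasi-coherent derived category via Bondal--Van den Bergh/Neeman, then conclude by the adjunction $\pi^*\dashv\pi_*$ for the affine morphism $\pi$ together with the exactness and conservativity of $\pi_*$. Your write-up is somewhat more explicit about the Neeman--Thomason identification of the thick closure with the compacts and about why the argument must live in $D_{qc}$ rather than $D^b(\Coh X)$, but the underlying proof is the one in the paper.
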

\begin{proof}
	 Let $\qcoh (X)$ be the category of quasi-coherent sheaves on $X$. By \cite[Theorems 2.1.2, Theorem 3.1.1]{BVdB}, it is equivalent to show that $\pi^*E$ generates the category $D\left(\qcoh (X)\right)$. Since $\pi :X \rightarrow \CP^2$ is affine and flat, then (non-derived) $\pi_*$ and $\pi^*$ are both exact. Thus we have $\BL \pi^* = \pi^* $ and $\BR \pi_* = \pi_*$. 
	 
	 For any object $F\in D\left(\qcoh X\right)$, we have the adjunction
	\begin{equation} \label{adj}
		\Hom^{\bullet}_{D\left(\qcoh X\right)}\left(\pi^*E, F\right) \cong \Hom^{\bullet}_{D\left(\qcoh \CP^2\right)} \left(E, \pi_*F\right).
	\end{equation}
	Then for any $F\in D(\qcoh X)$ suppose we have $\Hom^{\bullet}_{D(\qcoh X)}\left(\pi^*E, F\right) = 0$, then due to adjunction (\ref{adj})  and the fact that $E$ generates  $D(\qcoh(\CP^2))$, we have $\pi_*F = 0$. Since $\pi_*$ has no kernel (this can be verified by taking an affine cover on $\CP^2$ then applying the definition of $\pi_*$), we have $F\cong 0$. 
	
	Applying the same argument in reverse this means that $\pi^*E$ classically generates $D^b(X)$.
\end{proof}

We need the following results on vanishing of some cohomology groups. 
\begin{lemma} \label{vanish_4}
Suppose $\BE =(E_0,E_1,E_2)$ is a full exceptional sequence consisting of sheaves in $D^b(\CP^2)$ such that for any $i,\ j \in \{0,\ 1,\ 2\}$ we have $\mu(E_i)-\mu(E_j) \leq 2$. For any object $F\in D^b(\CP^2)$, we denote by $F|_H$ the restriction of $F$ to the hyperplane $H\subset \CP^2$.

Then we have for $n>0$
\begin{equation}\label{vanish_0}
	\HO^1\left(\CP^1, E^*_i \otimes E_j(n) |_H \right) = 0,
\end{equation}

\end{lemma}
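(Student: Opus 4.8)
The statement to prove is
$$\HO^1\!\left(\CP^1,\, E^*_i \otimes E_j(n)\big|_H\right) = 0 \qquad (n>0),$$
where $H\cong\CP^1$ is a hyperplane. Write $\mu_i=\mu(E_i)$ and $r_i$ for the rank. The plan is to compute the left side after Grothendieck-splitting the restricted bundle on $H$. First I would invoke Lemma \ref{stab_free} to treat each $E_i$ as an honest stable vector bundle, so that $E_i^*\otimes E_j(n)$ is locally free and its restriction to $H$ splits as $\bigoplus_s \CO_{H}(d_s)$. Since $\HO^1(\CP^1,\CO(d))=0$ exactly when $d\geq -1$, the vanishing is equivalent to $\min_s d_s \geq -1$, and $\min_s d_s = -\max(E_i|_H)+\min(E_j|_H)+n$, where $\max(\,\cdot\,)$ and $\min(\,\cdot\,)$ denote the largest and smallest splitting degrees. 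Thus everything reduces to the numerical estimate $\max(E_i|_H)-\min(E_j|_H)\leq n+1$.

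The second step is to pin down the splitting degrees, for which I claim each $E_i$ restricts to \emph{any} line $H$ as a balanced bundle, i.e. $\max(E_i|_H)=\lceil\mu_i\rceil$ and $\min(E_i|_H)=\lfloor\mu_i\rfloor$. I would prove this directly from exceptionality rather than citing a genericity theorem: tensoring the structure sequence $0\to\CO_{\CP^2}(-1)\to\CO_{\CP^2}\to\CO_H\to 0$ with the locally free sheaf $E_i^*\otimes E_i$ gives a short exact sequence whose long exact sequence reads, in the relevant spot,
$$\Ext^1_{\CP^2}(E_i,E_i)\longrightarrow \Ext^1_{H}(E_i|_H,E_i|_H)\longrightarrow \Ext^2_{\CP^2}\!\big(E_i,E_i(-1)\big).$$
The outer terms vanish: the first because $E_i$ is exceptional, and the second by Serre duality, since it is dual to $\Hom(E_i,E_i(-2))$ and there is no nonzero map between stable bundles from the larger slope $\mu_i$ to the smaller slope $\mu_i-2$. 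Hence $\Ext^1_H(E_i|_H,E_i|_H)=0$, and a bundle $\bigoplus\CO(d_s)$ on $\CP^1$ has vanishing self-$\Ext^1$ precisely when its splitting degrees differ by at most $1$; this forces the balanced form. Consequently the estimate to prove becomes $\lceil\mu_i\rceil-\lfloor\mu_j\rfloor\leq n+1$ for all pairs, and for $n\geq 1$ it suffices to show $\lceil\mu_i\rceil-\lfloor\mu_j\rfloor\leq 2$.

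The third and decisive step treats the dangerous pairs $i>j$ (for $i\leq j$ one has $\mu_i\leq\mu_j$ and the quantity is at most $1$ automatically). Here the defining property of an exceptional collection gives $\Hom^{\bullet}(E_i,E_j)=0$, so $\chi(E_i,E_j)=0$. Feeding this into the Riemann--Roch formula from Theorem \ref{strong} and setting $s=\mu_i-\mu_j$ yields the Markov-type quadratic
$$s^2-3s+\big(r_i^{-2}+r_j^{-2}\big)=0 .$$
Since the hypothesis gives $0<s\leq 2$ and since $r_i^{-2}+r_j^{-2}\in(0,2]$, evaluating the quadratic at $s=1$ and $s=2$ (both values are $\leq 0$) shows its two roots lie in $(0,1]$ and $[2,3)$; therefore either $s\leq 1$, or $s=2$ with $r_i=r_j=1$, i.e. $E_i,E_j$ are line bundles of integral slope differing by $2$. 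In the first case $\mu_i\leq\mu_j+1$ forces $\lceil\mu_i\rceil\leq\lfloor\mu_j\rfloor+2$, and in the second case $\lceil\mu_i\rceil-\lfloor\mu_j\rfloor=\mu_i-\mu_j=2$; either way the bound $\leq 2$ holds, completing the argument.

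I expect the main obstacle to be exactly this third step: the bare slope hypothesis $\mu_i-\mu_j\leq 2$ is \emph{not} enough, since by itself it only yields $\lceil\mu_i\rceil-\lfloor\mu_j\rfloor\leq 3$ and hence the weaker conclusion for $n\geq 2$. The point is that the exceptional Euler-characteristic relation upgrades the gap bound from $2$ to effectively $1$ (apart from the rigid line-bundle boundary case), and it is precisely this sharpening that makes the statement true down to $n=1$; the hypothesis $\mu_i-\mu_j\leq 2$ together with $n>0$ is tight. A secondary technical point worth checking is that the balancedness argument of the second step indeed holds for every line $H$, not merely a general one, which is what lets the conclusion be stated for an arbitrary hyperplane.
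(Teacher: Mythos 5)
Your proof is correct, but it follows a genuinely different route from the paper's. The paper works with the single bundle $E_i^*\otimes E_j$ directly: it tensors the restriction sequence of $H$ with $E_i^*\otimes E_j$, uses strongness (Theorem \ref{strong}) to kill $\Ext^1_{\CP^2}(E_i,E_j)$ and $\Ext^2_{\CP^2}(E_i,E_j)$, and bounds the connecting term $\Ext^2_{\CP^2}\bigl(E_i,E_j(-1)\bigr)\cong\Hom\bigl(E_j,E_i(-2)\bigr)^*$ by $\BC$ using stability and the slope hypothesis; this shows $\dim\HO^1\bigl(\CP^1,E_i^*\otimes E_j|_H\bigr)\le 1$, hence the minimal splitting degree of $E_i^*\otimes E_j|_H$ is at least $-2$, and twisting by $n>0$ finishes. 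You instead apply the same restriction-sequence trick to each $E_i$ separately (using only exceptionality and stability, not strongness) to prove that every $E_i|_H$ is balanced, reduce the statement to the numerical inequality $\lceil\mu_i\rceil-\lfloor\mu_j\rfloor\le 2$, and then establish that inequality for the pairs $i>j$ via the Riemann--Roch relation $\chi(E_i,E_j)=0$ and a root analysis of $s^2-3s+(r_i^{-2}+r_j^{-2})=0$. Both arguments land on the same effective bound (minimal degree $\ge -2+n$); the paper's is shorter and needs only one exact sequence per pair, while yours yields strictly more refined information — the exact splitting type of each $E_i|_H$ on every line and the sharpened slope gap $s\le 1$ outside the line-bundle boundary case — and makes transparent exactly where the hypothesis $\mu_i-\mu_j\le 2$ is tight. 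One thing worth noting is that your step 3 silently invokes Lemma \ref{inc} for $s>0$ when $i>j$; that is legitimate but should be cited.
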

\begin{proof}
Let $i,\ j\in \{0, 1, 2\}$. By assumptions and Lemma \ref{inc} we know $\mu\left(E_i(-2)\right) = \mu(E_i) - 2 \leq \mu(E_j)$. And since $E_j$ and $E_i(-2)$ are stable, we have $\Hom\left(E_j, E_i(-2) \right) = 0$  or\\ $\Hom\left(E_j, E_i(-2) \right) = \BC$.

Using the Serre duality we have 
$$
	\Ext^2_{\CP^2}\left(E_i, E_j(-1)\right) = \Hom_{\CP^2}\left(E_j, E_i(-2)\right)^*.
$$
Thus we have $\Ext^2\left(E_i, E_j(-1)\right) = \Hom\left(E_j, E_i(-2)\right)^* = 0$ or $\BC$.

 Let $H = \CP^1$ be a hyperplane in $\CP^2$, then by the Grothendieck splitting theorem, we denote the restriction of bundle $E_i$ to $H$ by $E_i|_H =\bigoplus_{m\in I} \CO(s_m)^{k_m}$, where $I$ is a finite set of indices, $s_m \in \BZ$, $k_m\in \mathbb{N}^+$. We denote by $E_i^{*}\otimes E_j|_H = \bigoplus_{m\in I'} \CO(a_m)^{u_m}$ for $a_m\in \BZ$ and $u_m\in \mathbb{N}^+$. Let $a_0$  be the smallest number of the set $\{a_m,\ m\in I'\}$. Now we consider the tensor product of the restriction sequence with $E_i^*\otimes E_j$:
$$
	\xymatrix{
		0 \ar[r] & E_i^{*}\otimes E_j(-1) \ar[r] & E_i^{*}\otimes E_j \ar[r] & E_i^{*}\otimes E_j|_H \ar[r] & 0.
	}
$$
The long exact sequence in cohomology gives 
$$
 \begin{tikzcd}
         \cdots \arrow[r]  & \Ext^1_{\CP^2}(E_i, E_j) \arrow[r] \arrow[d, phantom,""{coordinate,name=Z}]
            &   \Ext^1_H(E_i|_H,E_j|_H) \arrow[dll,
                                                            "",
                                                            rounded corners,
                                                            to path={ -- ([xshift=2ex]\tikztostart.east)
                                                        |- (Z) [near end]\tikztonodes
                                                        -| ([xshift=-2ex]\tikztotarget.west)
                                                        -- (\tikztotarget)}] \\
             \Ext^2_{\CP^2}\left(E_i, E_j(-1)\right) \arrow[r]
                & \Ext^2_{\CP^2}(E_i,E_j) \arrow[r] 
                    & \cdots
        \end{tikzcd}
    $$

Now, $\Ext^1_{\CP^2}(E_i,E_j) = \Ext^2_{\CP^2}(E_i,E_j) = 0 $ by the strong property of the exceptional collection due to the Theorem \ref{strong}, and $\Ext^2_{\CP^2}\left(E_i, E_j(-1)\right) = 0$ or $\BC$ by the previous argument, thus 
$$
\Ext^1_H(E_i|_H, E_j|_H) \cong \HO^1\left(\CP^1, E_i^{*}\otimes E_j|_H\right) = \HO^1(\CP^1, \bigoplus_{m\in I'} \CO(a_m)^{u_m}) = 0 \ \text{or } \BC. 
$$
Then we can deduce that $a_0 \geq -2$. Then since $n>0$ we have $ E_i^{*}\otimes E_j(n)|_H = \bigoplus \CO(a_m+n)^{u_m}$ where $a_m +n >-2$ for any $m\in I'$, thus
$$
	\HO^1\left(\CP^1,E_i^{*}\otimes E_j(n)|_H\right)  = \HO^1\left(\CP^1, \bigoplus_{m\in I'} \CO(a_i+n) \right)= 0.
$$
This finishes the proof of the lemma.
\end{proof}

\begin{proposition} \label{vanish3}
	Let $\BE =(E_0,E_1,E_2)$ be a full exceptional sequence consisting of sheaves on in $D^b(\CP^2)$. Suppose for a pair of indices $i,\ j \in \{0,1,2\}$ we have that
	$
		\mu(E_i)-\mu(E_j) \leq 2.
	$
	Then 
	$$
	\HO^k\left(\CP^2, E^*_i\otimes E_j (n)\right) = 0,
	$$
for $k>0$, $n\geq 0$. 
\end{proposition}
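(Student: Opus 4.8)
The plan is to split the statement into the three ranges $k\geq 3$, $k=2$, and $k=1$, writing $\CG := E_i^*\otimes E_j$, which is locally free because $E_i$ is a bundle by Lemma \ref{stab_free}. The case $k\geq 3$ is immediate: $\CP^2$ has dimension two, so $\HO^k$ of any coherent sheaf vanishes for $k\geq 3$.

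For $k=2$ I would pass to the dual statement by Serre duality on $\CP^2$, using $\omega_{\CP^2}=\CO(-3)$:
\[
    \HO^2\left(\CP^2, E_i^*\otimes E_j(n)\right) \cong \Hom_{\CP^2}\left(E_j, E_i(-n-3)\right)^*.
\]
Both $E_j$ and $E_i(-n-3)$ are stable bundles by Lemma \ref{stab_free}, and the slope hypothesis $\mu(E_i)-\mu(E_j)\leq 2$ gives $\mu(E_i(-n-3)) = \mu(E_i)-n-3 \leq \mu(E_j)-n-1 < \mu(E_j)$ for every $n\geq 0$. Since there is no nonzero morphism from a stable sheaf to a stable sheaf of strictly smaller slope (the same principle used in Lemma \ref{inc}), this $\Hom$-space vanishes, whence $\HO^2(\CP^2,\CG(n))=0$ for all $n\geq 0$.

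The remaining and main case is $k=1$. Here I would anchor an induction at the twist $n=0$: the base case is $\HO^1(\CP^2,\CG)=\Ext^1_{\CP^2}(E_i,E_j)$, which vanishes because $\BE$ is strong by Theorem \ref{strong}. To propagate upward, I restrict to a hyperplane $H=\CP^1$ and use the cohomology long exact sequence of
\[
    0 \to \CG(n-1) \to \CG(n) \to \CG(n)|_H \to 0.
\]
The relevant segment is $\HO^1(\CP^2,\CG(n-1)) \to \HO^1(\CP^2,\CG(n)) \to \HO^1(\CP^1,\CG(n)|_H)$, and for $n>0$ the last term vanishes by Lemma \ref{vanish_4} (whose proof only invokes the slope inequality for the pair $(i,j)$, so it applies under the present weaker hypothesis). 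Hence each restriction map $\HO^1(\CP^2,\CG(n-1)) \to \HO^1(\CP^2,\CG(n))$ is surjective for $n\geq 1$, and starting from $\HO^1(\CP^2,\CG(0))=0$ this forces $\HO^1(\CP^2,\CG(n))=0$ for all $n\geq 0$.

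The step most likely to be mishandled is the \emph{direction} of the induction in the $k=1$ case. It is tempting to descend from Serre vanishing $\HO^1(\CG(n))=0$ for $n\gg 0$, but the hyperplane sequence only yields surjections $\HO^1(\CG(n-1)) \twoheadrightarrow \HO^1(\CG(n))$, which run the wrong way for a descending argument. The correct observation is that strong-exceptionality (Theorem \ref{strong}) pins down the base twist $n=0$, after which these surjections carry the vanishing \emph{upward}; the $\CP^1$-vanishing of Lemma \ref{vanish_4} is exactly what makes the connecting term drop out for $n>0$. Everything else is routine bookkeeping about slopes and twists.
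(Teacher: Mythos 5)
Your proof is correct and its core --- the $k=1$ case, an upward induction on the twist $n$ anchored at $n=0$ by strong-exceptionality and propagated through the hyperplane restriction sequence together with Lemma \ref{vanish_4} --- is exactly the paper's argument; your remark that Lemma \ref{vanish_4} only needs the slope bound for the single pair $(i,j)$ is a correct and worthwhile clarification of the hypotheses. The only divergence is that you dispatch $k=2$ separately via Serre duality and stability, whereas the paper folds all $k\geq 2$ into the same induction (the $\HO^{\geq 2}$ term on $H\cong\CP^1$ vanishes for dimension reasons); both work, and the difference is cosmetic.
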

\begin{proof}
Let $H = \CP^1$ be a hyperplane in $\CP^2$. We argue by induction on $n$. For $n=0$, the claim follows from the strong property of the exceptional collection.  Now assume $\HO^k\left(\CP^2, E_i^{*}\otimes E_j(n-1)\right) = 0$ for $n>1$ and $k>0$, then we consider the short exact sequence
$$
	\xymatrix{
		0 \ar[r] & E_i^{*} \otimes E_j(n-1) \ar[r] & E_i^{*}\otimes E_j(n) \ar[r] & E_i^{*}\otimes E_j(n)|_H \ar[r] & 0.
	}
$$
By taking the associated long exact sequence and using the vanishing of $\HO^k\left(\CP^1,E_i^{*}\otimes E_j(n)|_H \right)$ for $n>0$ ($k=1$ follows from Lemma \ref{vanish_0} and $k>1$ always vanish because of dimension $1$), we have 
$$
	\HO^k\left(\CP^2, E_i^{*}\otimes E_j(n-1)\right) = \HO^k \left(\CP^2, E_i^{*}\otimes E_j(n)\right))=0.
$$
This proves the inductive step.

\end{proof}
%Since we have Proposition \ref{vanish1}, we are able to show that:
We have the following useful exact sequence:
\begin{lemma}[Symmetric product of Euler sequence]
We write $\CP^2$ as $\CP(V)$, where $V$ is a linear space of dimension $3$ (or for general $n$). Then we have the short exact sequence of sheaves for $k\geq 1$:
\begin{equation}\label{sym-euler}
		\xymatrix{
			0\ar[r] & \sym^{k-1} \left(V\otimes \CO(1)\right) \ar[r] & \sym^k\left(V\otimes \CO(1)\right) \ar[r] & \sym^k\TT \ar[r] & 0.
		}
	\end{equation}
\end{lemma}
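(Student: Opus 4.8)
The plan is to deduce the claim directly from the Euler sequence
\[
0 \to \CO \to V\otimes\CO(1) \to \TT \to 0
\]
on $\CP(V)$ by applying symmetric powers. Writing $A=\CO$, $B=V\otimes\CO(1)$ and $C=\TT$, we have a short exact sequence $0\to A\to B\to C\to 0$ of locally free sheaves in which the sub $A$ is a line bundle, and since $A=\CO$ is trivial the desired sequence (\ref{sym-euler}) is exactly the instance
\[
0 \to A\otimes\sym^{k-1}B \to \sym^k B \to \sym^k C \to 0
\]
of the following general fact, which I would isolate and prove: for any short exact sequence $0\to A\to B\to C\to 0$ of locally free sheaves with $A$ of rank one, the natural maps below form a short exact sequence. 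Specializing $A=\CO$ turns the left-hand term $A\otimes\sym^{k-1}B$ into $\sym^{k-1}(V\otimes\CO(1))$, giving precisely (\ref{sym-euler}).

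First I would construct the two maps. The surjection $\sym^k B\to\sym^k C$ is the functoriality of $\sym^k$ applied to $B\twoheadrightarrow C$; it is surjective because $\sym^k$ is right exact. The left map is multiplication $A\otimes\sym^{k-1}B\to\sym^k B$, $s\otimes x\mapsto s\cdot x$, where $s$ is the image in $B$ of a section of $A$ (when $A=\CO$, one takes $s$ to be the image of $1$ under $\CO\to B$). The composite $A\otimes\sym^{k-1}B\to\sym^k B\to\sym^k C$ vanishes, since it factors through the zero map $A\to C$; hence the image of the multiplication map lies in $\ker(\sym^k B\to\sym^k C)$.

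It then remains to show that the resulting complex is short exact, and this is the step I expect to require the most care, because the Euler sequence does not split globally. The key point is that exactness of a complex of locally free sheaves is a local property, and that $0\to A\to B\to C\to 0$ does split on a sufficiently small affine open $U$, where $B|_U\cong A|_U\oplus C|_U$. Over such $U$ one has
\[
\sym^k(A\oplus C)\;\cong\;\bigoplus_{i=0}^{k}\sym^i A\otimes\sym^{k-i}C\;\cong\;\bigoplus_{i=0}^{k}A^{\otimes i}\otimes\sym^{k-i}C,
\]
using $\sym^i A\cong A^{\otimes i}$ for the line bundle $A$. Under this decomposition the map $\sym^k B\to\sym^k C$ is the projection onto the $i=0$ summand, so its kernel is $\bigoplus_{i\geq 1}A^{\otimes i}\otimes\sym^{k-i}C$, and the multiplication map identifies $A\otimes\sym^{k-1}B=\bigoplus_{i\geq 1}A^{\otimes i}\otimes\sym^{k-i}C$ with exactly this kernel. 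In particular the multiplication map is injective locally, hence injective, with image precisely $\ker(\sym^k B\to\sym^k C)$, establishing exactness.

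As a consistency check the ranks match: if $B$ has rank $r$, then $A$ has rank $1$ and $C$ has rank $r-1$, and Pascal's identity
\[
\binom{r+k-2}{k-1}+\binom{r+k-2}{k}=\binom{r+k-1}{k}
\]
records that $\operatorname{rank}(A\otimes\sym^{k-1}B)+\operatorname{rank}(\sym^k C)=\operatorname{rank}(\sym^k B)$. For $\CP^2=\CP(V)$ one has $r=3$, but the argument is uniform in $\dim V$, which is why the lemma is stated for general $n$. The only genuine subtlety is the passage from the local splitting to the global exactness, which I would make precise by checking the complex on stalks (equivalently on fibers), where the sequence splits and the computation above applies verbatim.
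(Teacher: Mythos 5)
Your proof is correct, and it follows the same overall strategy as the paper's: both reduce the statement to the split situation and verify exactness there. The difference lies in how exactness in the middle is established. The paper states a lemma for a short exact sequence of free modules $0\to A\to N\to P\to 0$ over a ring $A$ and argues by an element chase: a pure tensor $n_1\otimes\cdots\otimes n_k\in\sym^k N$ mapping to zero must have some $n_i$ with $p(n_i)=0$, hence lies in the image of $\widetilde{e}$. You instead pass to a local splitting $B|_U\cong A|_U\oplus C|_U$ and invoke the canonical decomposition $\sym^k(A\oplus C)\cong\bigoplus_{i=0}^k\sym^iA\otimes\sym^{k-i}C$, identifying the kernel of $\sym^kB\to\sym^kC$ with the $i\geq 1$ summands and matching it against the image of the multiplication map. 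Your version buys some robustness: the paper's element argument as written only treats pure tensors (a general element of $\sym^kN$ is a sum of such, and a sum can map to zero without any individual term doing so), and the step ``some $n_i$ has $p(n_i)=0$'' implicitly uses that $\sym^\bullet P$ is a domain; your direct-sum decomposition avoids both issues and makes the injectivity of the left-hand map transparent. You are also more explicit than the paper about the globalization step, namely that the maps are defined globally while exactness is checked on stalks where the Euler sequence splits. One pedantic remark: checking on stalks is the right criterion; checking on fibers is not literally equivalent for injectivity in general, though it is harmless here since the complex is locally split with locally free terms.
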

\begin{proof}
This follows directly from the following simple result in commutative algebra: suppose there is a short exact sequence of free $A$-modules
$$
\xymatrix{
0\ar[r] & A \ar[r]^{e}&  N \ar[r]^p & P \ar[r]& 0,
}
$$
then there is a short exact sequence
$$
    \xymatrix{
    0 \ar[r] & A\otimes_A  \sym^{k-1} N \ar[r]^-{\wtil{e}} & \sym^k N \ar[r]^-{\wtil{p}} & \sym^k P \ar[r] & 0
    }
$$
where $\wtil{e} := e\otimes id^{\otimes k-1}$ and $\wtil{p} := p^{\otimes k}$. Obviously $\wtil{e}$ is injective and $\wtil{p}$ is surjective. For an element $n_1\otimes\cdots \otimes n_k\in \sym^k N$ whose image under $\wtil{p}$ is $0$, we have at least one $n_i$ such that $p(n_i) = 0$. Since we are in the symmetric power of modules we can assume that $i=1$, so $n_1 = e(a)$ for some $a\in A$. Thus $n_1\otimes \cdots \otimes n_k$ is in the image of $\wtil{e}$. 
\end{proof}
\begin{corollary} \label{sym_euler_2}
    For a pair of vector bundles $E$ and $F$ on $\CP^2$, we suppose;. 
    $$
    \Ext^k_{\CP^2}\left(E,F(n)\right) = 0
    $$
    for $n\geq 0$, then 
    $$
    \Ext^k_X(\pi^*E, \pi^*F) = 0.
    $$
\end{corollary}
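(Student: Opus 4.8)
The plan is to push the whole computation down to $\CP^2$ via the affine, flat projection $\pi$ and then feed in the hypothesis through the symmetric Euler sequence (\ref{sym-euler}). Recall from the proof of Lemma \ref{gen} that $\pi$ affine and flat gives $\BL\pi^* = \pi^*$ and $\BR\pi_* = \pi_*$, together with the adjunction (\ref{adj}). Applying this with the second argument $\pi^* F$ yields
\[
\Ext^\bullet_X(\pi^* E, \pi^* F) \cong \Ext^\bullet_{\CP^2}\left(E, \pi_*\pi^* F\right).
\]
Since $X = \Tot\Omega_{\CP^2} = \operatorname{Spec}_{\CP^2}\sym^\bullet\TT$, one has $\pi_*\CO_X \cong \bigoplus_{m\geq0}\sym^m\TT$, and the projection formula (valid as $F$ is locally free) gives $\pi_*\pi^* F \cong \bigoplus_{m\geq0}\left(F\otimes\sym^m\TT\right)$. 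As $E$ is a vector bundle on the smooth variety $\CP^2$ it is a perfect complex, hence a compact object, so $\Ext^\bullet_{\CP^2}(E,-)$ commutes with this coproduct and I obtain
\[
\Ext^k_X(\pi^* E, \pi^* F) \cong \bigoplus_{m\geq0}\Ext^k_{\CP^2}\left(E, F\otimes\sym^m\TT\right).
\]
It therefore suffices to show that each summand vanishes.

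First I would dispose of $m=0$, where $\sym^0\TT = \CO$ and the summand is $\Ext^k_{\CP^2}(E,F) = \Ext^k_{\CP^2}(E, F(0))$, which vanishes by hypothesis. For $m\geq1$ I would tensor the symmetric Euler sequence (\ref{sym-euler}) by the locally free sheaf $F$ (preserving exactness) to get
\[
0 \to F\otimes\sym^{m-1}(V\otimes\CO(1)) \to F\otimes\sym^m(V\otimes\CO(1)) \to F\otimes\sym^m\TT \to 0,
\]
and apply $\Ext^\bullet_{\CP^2}(E,-)$. The decisive observation is that $\sym^j(V\otimes\CO(1)) \cong \sym^j V\otimes\CO(j)$ is a direct sum of copies of $\CO(j)$, so $\Ext^\bullet_{\CP^2}(E, F\otimes\sym^j(V\otimes\CO(1)))$ is a direct sum of copies of $\Ext^\bullet_{\CP^2}(E, F(j))$. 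Reading off the relevant segment of the long exact sequence,
\[
\Ext^k(E, F\otimes\sym^m(V\otimes\CO(1))) \to \Ext^k(E, F\otimes\sym^m\TT) \to \Ext^{k+1}(E, F\otimes\sym^{m-1}(V\otimes\CO(1))),
\]
the outer terms are sums of $\Ext^k(E, F(m))$ and $\Ext^{k+1}(E, F(m-1))$, both with twist $\geq0$, hence both vanish by the hypothesis; exactness then forces the middle term to vanish. Summing over $m$ gives the claim.

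The one genuine subtlety, and the step I would flag explicitly, is that the long exact sequence couples degree $k$ with degree $k+1$: to kill the connecting map into $\Ext^{k+1}(E, F(m-1))$ I must invoke the vanishing hypothesis in degree $k+1$ as well as $k$. This is automatic in the intended application, where Proposition \ref{vanish3} supplies $\Ext^\bullet(E_i, E_j(n)) = 0$ for all positive cohomological degrees at once; accordingly I would read the corollary as asserting $\Ext^k_X(\pi^* E, \pi^* F) = 0$ for every $k>0$ whenever $\Ext^k_{\CP^2}(E, F(n)) = 0$ for all $k>0$ and $n\geq0$. Two routine bookkeeping points I would also verify are that $\BR\pi_* = \pi_*$ (affineness of $\pi$) and that comparing $\Ext$ groups computed in $D^b(\Coh X)$ with those in $D(\qcoh X)$ is legitimate for these bounded coherent inputs; both are standard.
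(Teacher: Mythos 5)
Your proposal is correct and follows essentially the same route as the paper: reduce to $\bigoplus_{m\geq 0}\Ext^k_{\CP^2}(E,F\otimes\sym^m\TT)$ via adjunction and $\pi_*\CO_X\cong\sym^{\bullet}\TT$, then kill each summand using the long exact sequence from the symmetric Euler sequence (\ref{sym-euler}). Your observation that the connecting map couples degree $k$ to $k+1$, so the hypothesis must be read as vanishing for all $k>0$ simultaneously, is a fair and correct reading of what the paper's own argument implicitly uses.
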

\begin{proof}
      We have 
      $$
      \Ext^k_X(\pi^*E,\pi^*F) = \Ext^k_{\CP^2}(E,F\otimes \sym^{\bullet}\TT) = \bigoplus_{n=0} \HO^k(\CP^2, E^*\otimes F \otimes \sym^n \TT)
      $$ 
      by adjunction and $\pi_* \CO_{X} \cong \sym^{\bullet} \TT$. For each $n\geq 1$ we tensor the exact sequence (\ref{sym-euler}) with $E^*\otimes F$, and note that  $\sym^n\left(V(1)\right) \cong \CO(n)^{\binom{n+2}{n}}$. Thus we get an exact sequence:
    $$
		\xymatrix{	
			0\ar[r] & \CO(n-1)^{\binom{n+1}{n-1}}\otimes E^*\otimes F \ar[r] & \CO(n)^{\binom{n+2}{n}}\otimes E^*\otimes F \ar[r] & \sym^n \TT\otimes E^*\otimes F\ar[r] & 0.
		}
	$$
	Then we take the long exact sequence of cohomology groups:
    $$
    \begin{tikzcd}
         \cdots \arrow[r]  & \Ext^1_{\CP^2}\left(E, F(n)\right)^{\oplus \binom{n+2}{n}} \arrow[r] \arrow[d, phantom,""{coordinate,name=Z}]
            &    \Ext^1_{\CP^2}\left(E,  F\otimes \sym^n \TT_{\CP^2} \right) \arrow[dll,
                                                            "",
                                                            rounded corners,
                                                            to path={ -- ([xshift=2ex]\tikztostart.east)
                                                        |- (Z) [near end]\tikztonodes
                                                        -| ([xshift=-2ex]\tikztotarget.west)
                                                        -- (\tikztotarget)}] \\
             \Ext^2_{\CP^2}\left(E, F(n-1)\right)^{\oplus \binom{n+1}{n-1}} \arrow[r]
                &  \Ext^2_{\CP^2}\left(E,F(n)\right)^{\oplus \binom{n+2}{n}} \arrow[r] 
                    & \cdots
        \end{tikzcd}
    $$
	so $\HO^k(\CP^2,E^*\otimes F \otimes \sym^n\TT) $ vanishes due to the assumptions.
\end{proof}
\begin{theorem}\label{tilt_obj}
	Let $\BE=(E_0,E_1,E_2)$ be full  exceptional collection consisting of sheaves in $D^b(\CP^2)$ such that $\mu(E_2)-\mu(E_0) \leq 2$,  then $T=\bigoplus^2_{i=0}\pi^*E_i$ is a tilting object in $D^b(X)$.
\end{theorem}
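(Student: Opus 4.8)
The plan is to verify the two defining properties of a tilting object in turn, since nearly all of the needed machinery has been assembled in the preceding results. Write $T=\bigoplus_{i=0}^{2}\pi^*E_i$. We must show (a) $T$ classically generates $D^b(X)$, and (b) $\Hom_X(T,T[n])=0$ for all $n\neq 0$.

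Generation is the quick part. Since $\BE=(E_0,E_1,E_2)$ is a full exceptional collection consisting of sheaves on $\CP^2$, it is strong by Theorem \ref{strong}, so $\bigoplus_i E_i$ is a tilting object on $\CP^2$ and in particular a classical generator of $D^b(\CP^2)$. Lemma \ref{gen} then gives at once that $\pi^*\!\left(\bigoplus_i E_i\right)=T$ classically generates $D^b(X)$, disposing of (a).

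For (b), note first that by Lemma \ref{stab_free} each $E_i$ is (up to the shift we have normalized away) a locally free sheaf, so each $\pi^*E_i$ is a vector bundle on $X$ and $T$ is a genuine sheaf; hence $\Ext^n_X(\pi^*E_i,\pi^*E_j)=0$ automatically for $n<0$, and it remains to treat $n>0$. The engine here is Corollary \ref{sym_euler_2}, which reduces the desired vanishing $\Ext^n_X(\pi^*E_i,\pi^*E_j)=0$ to the family of vanishings $\Ext^n_{\CP^2}(E_i,E_j(m))=\HO^n\!\left(\CP^2,E_i^*\otimes E_j(m)\right)=0$ for all $m\geq 0$ and $n>0$. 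This last statement is exactly the conclusion of Proposition \ref{vanish3}, applied to the pair $(i,j)$, whose sole hypothesis is the slope bound $\mu(E_i)-\mu(E_j)\leq 2$.

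The one point requiring care—the genuine content behind the single assumption $\mu(E_2)-\mu(E_0)\leq 2$—is that the hypothesis of Proposition \ref{vanish3} must be checked for \emph{every} ordered pair $(i,j)$ with $i,j\in\{0,1,2\}$, including the diagonal pairs. This is where Lemma \ref{inc} enters: it gives the strict ordering $\mu(E_0)<\mu(E_1)<\mu(E_2)$. For $i\leq j$ we then have $\mu(E_i)-\mu(E_j)\leq 0\leq 2$; for the diagonal $\mu(E_i)-\mu(E_i)=0$; and for $i>j$ the largest possible difference is $\mu(E_2)-\mu(E_0)$, which is $\leq 2$ by hypothesis, while the intermediate differences $\mu(E_1)-\mu(E_0)$ and $\mu(E_2)-\mu(E_1)$ are each strictly dominated by $\mu(E_2)-\mu(E_0)\leq 2$. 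Thus the slope bound propagates to all pairs, Proposition \ref{vanish3} applies across the board, Corollary \ref{sym_euler_2} transports the vanishing up to $X$, and combined with the automatic vanishing in negative degrees we conclude $\Hom_X(T,T[n])=0$ for $n\neq 0$. Together with generation this shows $T$ is a tilting object, completing the proof.
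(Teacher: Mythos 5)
Your proposal is correct and follows essentially the same route as the paper: Lemma \ref{gen} for classical generation, and Proposition \ref{vanish3} combined with Corollary \ref{sym_euler_2} for the $\Ext$-vanishing. The only difference is that you explicitly verify the slope hypothesis $\mu(E_i)-\mu(E_j)\leq 2$ for every ordered pair using Lemma \ref{inc}, a point the paper's proof leaves implicit; this is a worthwhile clarification but not a different argument.
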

\begin{proof}
We firstly show that $\Ext^n_X(\pi^*T, \pi^* T) = 0$, for $n>0$. Since the Ext functor commutes with direct sum functor,  it is enough to show that for any $i,\ j$
$$
\Ext^n_X(\pi^* E_i, \pi^* E_j) = 0, \text{ for } n>0.
$$

By  Proposition \ref{vanish3} and Corollary \ref{sym_euler_2}, we have
	$$
		\HO^n\left(\CP^2, E^*_i\otimes E_j \otimes \sym^{\bullet}\TT\right) = 0,
	$$
	for $n>0$. Thus $\Ext^n_X(\pi^*T, \pi^* T) = 0$ for $n>0$.
	
	Combining with  Lemma \ref{gen} that $\pi^*T$ is a classical generator, we conclude that $\pi^* T$ is tilting object in $D^b(X)$.
\end{proof}
As a corollary, we have
\begin{corollary}[Main theorem I]\label{tilting}
	Let $\BE=(E_0,E_1,E_2)$ be a full exceptional collection consisting of sheaves in $D^b(\CP^2)$. We denote the corresponding helix by $\BH$. There exists a thread $\BE' = (E_i, E_{i+1},E_{i+2})$ in $\BH$ such that $\BE'$ satisfies the conditions in Proposition \ref{vanish3}. Thus the pull back of the direct sum of the objects in $\BE'$ is a tilting object in $D^b(X)$.
\end{corollary}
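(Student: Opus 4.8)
The plan is to track the slopes of the sheaves that appear in the helix $\BH$ and, by a simple averaging argument over one period, to locate a thread whose extreme slopes differ by at most $2$; for that thread Theorem \ref{tilt_obj} applies verbatim.

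First I would pin down the helix. Since the Serre functor on $D^b(\CP^2)$ is $S_{\CD} = -\otimes\CO(-3)[2]$ and here $n=2$, the helix generated by $\BE$ is of type $(2,2)$, so its defining relation $E_{i-3} = S_{\CD}(E_i)[-2] = E_i(-3)$, equivalently $E_{i+3} = E_i(3)$, carries no homological shift. In particular every $E_i$ is a Serre twist of one of the original sheaves $E_0, E_1, E_2$, so the entire helix consists of sheaves. By definition of a helix each thread $(E_i, E_{i+1}, E_{i+2})$ is a full exceptional collection, and being a collection of sheaves it is strong by Theorem \ref{strong}; hence Lemma \ref{inc} yields the strict inequalities $\mu(E_i) < \mu(E_{i+1}) < \mu(E_{i+2})$ for every $i \in \BZ$.

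Next I would set $s_i := \mu(E_i)$ and use the two structural facts just obtained: the gaps $d_i := s_{i+1} - s_i$ are strictly positive, and the twist relation $E_{i+3}=E_i(3)$ forces $s_{i+3} = s_i + 3$, so that $d_i + d_{i+1} + d_{i+2} = 3$ and $d_{i+3} = d_i$. The slope spread of the thread starting at index $i$ is $s_{i+2} - s_i = d_i + d_{i+1}$, and summing over the three residues modulo $3$ gives $(d_0+d_1)+(d_1+d_2)+(d_2+d_0) = 2(d_0+d_1+d_2) = 6$. Therefore the minimum of the three numbers $s_{i+2}-s_i$ is at most their average $2$, which produces an index $i$ with $\mu(E_{i+2}) - \mu(E_i) = s_{i+2}-s_i \leq 2$.

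For such a thread $\BE' = (E_i, E_{i+1}, E_{i+2})$ every pairwise slope difference is bounded by $2$, since the slopes are increasing; thus $\BE'$ satisfies the hypothesis of Theorem \ref{tilt_obj}, and $\bigoplus_{k=i}^{i+2}\pi^*E_k$ is a tilting object in $D^b(X)$, which finishes the proof. The only genuinely delicate point is the first step: one must be certain the helix is of type $(2,2)$, so that $E_{i+3}=E_i(3)$ is shift-free and all helix elements remain honest sheaves. Otherwise neither Theorem \ref{strong} nor Lemma \ref{inc} would be applicable and the slope bookkeeping would collapse. Once that is in place, the combinatorial heart of the argument, namely the averaging over the three threads of a period, is immediate.
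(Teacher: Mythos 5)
Your proposal is correct and follows essentially the same route as the paper: both arguments reduce to Theorem \ref{tilt_obj} by noting that the Serre functor $-\otimes\CO(-3)[2]$ makes the helix consist of sheaves with $\mu(E_{i+3})=\mu(E_i)+3$, that Lemma \ref{inc} forces the slopes to be strictly increasing, and that the three thread-spreads over one period sum to $6$, so one of them is at most $2$. Your explicit remark that the $(2,2)$-type of the helix kills the homological shift (so every thread really is a strong exceptional collection of sheaves) is a point the paper leaves implicit, but the averaging argument is identical.
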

\begin{proof}
	 We denote  the corresponding helix  by $\BH = (\cdots,E_0, E_1, E_2, E_3, E_4, \cdots)$. Since the Serre functor on $\CP^2$ is $-\otimes \CO(-3)[2]$, we have 
	 $$
	 \mu(E_{i-3}) = \mu\left(E_i \otimes \CO(-3)\right)  = \mu(E_i) - 3.
	 $$
	 
	 We can consider the numbers $a_0 = \mu(E_2)-\mu(E_0)$, $a_1=\mu(E_3)-\mu(E_1)$ and $a_2=\mu(E_4)-\mu(E_2)$. By Lemma \ref{inc}, we  actually have that the slopes in the helix are strictly increasing, thus  $a_i >0$. The sum $a_0+a_1+a_2 = \mu(E_4)-\mu(E_1) + \mu(E_3)-\mu(E_0) = 6$. Then  one of the $a_i$ must be  $\leq 2$. Then we let $T = \bigoplus_{j=i}^{i+2}\pi^* E_j$, and due to Theorem \ref{tilt_obj}, we have $T$ a tilting object in $D^b(X)$.
\end{proof}
%%%%%%%%%%%%%%%%%%%%%
\section{Secondary Quiver and Universal Extension}
Let $\BE=(E_0,E_1,E_2)$ be a full  exceptional collection consisting of sheaves in $D^b(\CP^2)$ (then $\BE$ is strong by Theorem \ref{strong}), and we denote $T_i := \pi^* E_i$, $T = \oplus_i T_i$. In general, $T$ will not be a tilting object, since we  may have  $\Ext^n_X(T,T) \neq 0$, for $n>0$. 
\begin{example}
Let $\BE = \left(\Omega(1), \CO,\CO(2)\right)$, note that $\mu\left(\CO(2)\right) - \mu\left(\Omega(1)\right) = 5/2$. Then
\begin{eqnarray*}
    \Ext^1_X\left(\pi^*\CO(2),\pi^*\Omega(1)\right) &\cong&  \BC^3.
\end{eqnarray*}
so $T= \pi^* \Omega(1)\oplus \pi^* \CO \oplus \CO(2)$ is not a tilting object.
\end{example}
However, in general we have
\begin{lemma}\label{van_geq2}
	For any $i,\ j = 0,\ 1,\ 2$, we have
	\begin{enumerate}
	\item 
	    $$
		    \Ext^{\geq 2}_X(T_i, T_j) = 0,
    	$$
    \item 
        if $i\leq j$ then
        $$
            \Ext^{\geq 1}_X(T_i,T_j) = 0.
        $$
    \end{enumerate}
\end{lemma}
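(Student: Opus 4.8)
The plan is to reduce every $\Ext$-group to sheaf cohomology on $\CP^2$ by means of the identity
\[
\Ext^k_X(T_i,T_j)=\Ext^k_X(\pi^*E_i,\pi^*E_j)=\bigoplus_{n\geq 0}\HO^k\left(\CP^2,\ E_i^*\otimes E_j\otimes\sym^n\TT\right)
\]
recorded in the proof of Corollary \ref{sym_euler_2} (it follows from $\pi$ being affine and $\pi_*\CO_X\cong\sym^\bullet\TT$). Because $\CP^2$ is a surface, each $\HO^k$ on the right vanishes for $k\geq 3$; in particular $\Ext^{\geq 3}_X(T_i,T_j)=0$ for free. Thus part (1) reduces to showing $\Ext^2_X(T_i,T_j)=0$, and part (2) reduces to this together with the vanishing of $\Ext^1$.

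Part (2) is the easy half. For $i\leq j$, Lemma \ref{inc} gives $\mu(E_i)\leq\mu(E_j)$, hence $\mu(E_i)-\mu(E_j)\leq 0\leq 2$; Proposition \ref{vanish3} then yields $\Ext^k_{\CP^2}(E_i,E_j(n))=0$ for all $k\geq 1$ and $n\geq 0$, which is exactly the hypothesis of Corollary \ref{sym_euler_2}. Applying that corollary gives $\Ext^{\geq 1}_X(\pi^*E_i,\pi^*E_j)=0$ immediately.

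Part (1) is the substantive half, since for $i>j$ the difference $\mu(E_i)-\mu(E_j)$ is positive and Proposition \ref{vanish3} no longer applies. I would first record the uniform bound $\mu(E_i)-\mu(E_j)<3$ for all $i,j$: exactly as in the proof of Corollary \ref{tilting}, the thread $(E_{-1},E_0,E_1)$ of the helix consists of sheaves with $E_{-1}=E_2\otimes\CO(-3)$, and since the slopes strictly increase along the helix (Lemma \ref{inc}) one has $\mu(E_2)-3=\mu(E_{-1})<\mu(E_0)$, i.e. $\mu(E_2)-\mu(E_0)<3$. Granting this, I tensor the symmetric Euler sequence (\ref{sym-euler}) with $E_i^*\otimes E_j$ and extract from the long exact sequence the segment
\[
\HO^2\left(E_i^*\otimes E_j(n)\right)^{\oplus\binom{n+2}{n}}\longrightarrow \HO^2\left(\sym^n\TT\otimes E_i^*\otimes E_j\right)\longrightarrow \HO^3\left(E_i^*\otimes E_j(n-1)\right)^{\oplus\binom{n+1}{n-1}}.
\]
The right-hand term vanishes for dimension reasons, and the left-hand term I kill by Serre duality: $\HO^2(E_i^*\otimes E_j(n))\cong\Hom(E_j,E_i(-n-3))^*$, and a nonzero map between the stable bundles $E_j$ and $E_i(-n-3)$ (stable by Lemma \ref{stab_free}) would force $\mu(E_i)-\mu(E_j)\geq n+3\geq 4$, contradicting the bound. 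The case $n=0$ is just the strongness of $\BE$ (Theorem \ref{strong}), so $\Ext^2_X(T_i,T_j)=0$ for all $i,j$.

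The main obstacle is precisely this case $i>j$ of part (1): one cannot invoke Proposition \ref{vanish3}, so the uniform helix slope bound $\mu(E_i)-\mu(E_j)<3$ together with the Serre-duality computation of the top cohomology is what forces the higher $\Ext$-groups to vanish. Everything else is formal bookkeeping with the Euler long exact sequence and dimension vanishing on the surface $\CP^2$.
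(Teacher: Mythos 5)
Your proof is correct, and part (2) coincides with the paper's (reduce to Proposition \ref{vanish3} via Lemma \ref{inc} and Corollary \ref{sym_euler_2}). For part (1), however, you take a genuinely different route. The paper also reduces to showing $\HO^{n}\left(\CP^2, E_i^*\otimes E_j(k)\right)=0$ for $n\geq 2$ and $k\geq 0$, but it does so by restricting $E_i^*\otimes E_j(k)$ to a degree-$k$ curve $C\subset\CP^2$: in the resulting long exact sequence the term $\HO^n(\CP^2,E_i^*\otimes E_j)=\Ext^n_{\CP^2}(E_i,E_j)$ vanishes for $n\geq 1$ by strongness/exceptionality, and $\HO^{\geq 2}(C,-)=0$ because $C$ is a curve; no Serre duality, no stability, and no helix input are needed. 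You instead kill the only possibly nonzero group $\HO^2\left(E_i^*\otimes E_j(n)\right)\cong\Hom\left(E_j,E_i(-n-3)\right)^*$ by Serre duality together with stability (Lemma \ref{stab_free}) and the uniform helix slope bound $\mu(E_2)-\mu(E_0)<3$ (the same bound the paper derives later in Proposition \ref{class_of_2nd}). Both arguments are sound; the paper's is more economical in its inputs, while yours makes explicit why the Calabi--Yau twist by $\CO(-3)$ forces the vanishing and has the pleasant feature of observing up front that $\Ext^{\geq 3}_X(T_i,T_j)=0$ for free since $\CP^2$ is a surface, so that only $\Ext^2$ ever needs an argument. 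One cosmetic remark: for $i>j$ the $n=0$ case follows from the definition of an exceptional collection ($\Hom^{\bullet}(E_i,E_j)=0$) rather than strongness per se, though the paper's notion of strong covers all pairs, so your citation is defensible.
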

\begin{proof}
	By Corollary \ref{sym_euler_2}, it is enough to show that for any $i,\ j$, 
	$$
		\HO^n\left(\CP^2, E_i^*\otimes E_j (k)\right) = 0,\ n \geq 2,
	$$
	where $k\geq 0$. By restricting the bundle $E_i^*\otimes E_j$ to a degree $k$ curve  $C$ in $\CP^2$ where $k>0$, we have the short exact sequence:
	$$
		\xymatrix{
		0\ar[r] & E_i^*\otimes E_j \ar[r] & E_i^*\otimes E_j(k) \ar[r] & E_i^*\otimes E_j(k) |_C  \ar[r] & 0
		}
	$$
	Then  we take the cohomology groups, and use the vanishing of $\HO^{\geq 2}(C, E_i^*\otimes E_j(k)|_C) = 0$ so we proved the first statement.
	
	The second statement follows from a combination of Lemma \ref{inc} and Proposition \ref{vanish3}.
\end{proof}

Now we package the non-tilting information of $T$ into the following algebraic structure, and we call it the secondary quiver:
\begin{definition}[Secondary quiver] 
Let $\BE$ be a full exceptional collection on $\CP^2$ consisting of sheaves and $T_i := \pi^* E_i$. Then let $Q_{\BE}$ be the quiver with $3$ vertices indexed by $0,\ 1,\ 2$, and the  arrows from $i$ to $j$ correspond to a basis of $\Ext^1_X(T_j,T_i)$.  The  quiver $Q_{\BE}$ is called the secondary quiver of $\BE$. The category of right finitely generated module of $\BC Q_{\BE}$ is denoted by\\  $\Rep\ Q_{\BE} = \mod \BC Q_{\BE}$. We denote the simple modules in $\Rep\ Q_{\BE}$ by $S_i$.
\end{definition}
\begin{remark}
With the notations above, we have $\Ext^1_{\BC Q_{\BE}}(S_i , S_j) \cong \Ext^1_X(T_i,T_j)$.
\end{remark}
\begin{proposition} \label{class_of_2nd}
	 $Q_{\BE}$ could only be one of the following forms:
	\begin{enumerate}
		\item
			$$
				\xymatrix{
					& 0 \ar@<1ex>[dl] \ar@{}[dl]|-{..} \ar@<-1ex>[dl]_{a_{01}} \ar@<1ex>[dr]^{a_{02}} \ar@{}[dr]|-{..} \ar@<-1ex>[dr] & \\
					1  & & 2 
									}
			$$
			where $a_{0i}\geq 0,\ i\in \{1,\ 2\}$ denote the number of arrows. Then any object $M$ in $\Rep\  Q_{\BE}$ fits into the short exact sequence:
			$$
				\xymatrix{
					0\ar[r] & S_0^{m_0} \ar[r] & M \ar[r] & S_1^{m_1} \oplus S_2^{m_2} \ar[r] & 0
				}
			$$
		    where $m_i \in \mathbb{N}$
		\item 
			$$
				\xymatrix{
					0  \ar@<1ex>[dr] \ar@{}[dr]|-{..} \ar@<-1ex>[dr]_{a_{02}}&   & 1 \ar@<1ex>[dl]^{a_{12}} \ar@{}[dl]|-{..} \ar@<-1ex>[dl]\\
				 & 2 & 
				}
			$$
			where $a_{i2}\geq 0, \ i\in \{0,\ 1\}$ denote the number of arrows. Then any object $M$ in $\Rep\  Q_{\BE}$ fits into the short exact sequence:
	       $$
				\xymatrix{
					0 \ar[r] & S_0^{m_0} \oplus S_1^{m_1} \ar[r] & M \ar[r] & S_2^{m_2} \ar[r] & 0
				}
			$$
			$m_i \in \mathbb{N}$.
	\end{enumerate}
\end{proposition}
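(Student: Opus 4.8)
The plan is to read off the shape of $Q_{\BE}$ from the groups $\dim\Ext^1_X(T_j,T_i)$, and then exclude the single forbidden configuration. Since an arrow $i\to j$ of $Q_{\BE}$ records a basis vector of $\Ext^1_X(T_j,T_i)$, the whole quiver is determined by these dimensions. By Lemma \ref{van_geq2}(2) we have $\Ext^{\geq 1}_X(T_i,T_j)=0$ whenever $i\leq j$, so $\Ext^1_X(T_j,T_i)\neq 0$ forces $j>i$; equivalently every arrow runs from a smaller to a larger index, leaving only the three candidates $0\to 1$, $0\to 2$, $1\to 2$. Lemma \ref{van_geq2}(1) kills everything in degree $\geq 2$, so these three numbers are the entire story and $Q_{\BE}$ is automatically acyclic.

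The crux is to show that the arrows $0\to 1$ and $1\to 2$ cannot coexist. First I would extract a local slope obstruction: combining Proposition \ref{vanish3} (applied with the roles of $i,j$ exchanged) with Corollary \ref{sym_euler_2}, if $\mu(E_j)-\mu(E_i)\leq 2$ then $\Ext^1_{\CP^2}(E_j,E_i(n))=0$ for all $n\geq 0$ and hence $\Ext^1_X(T_j,T_i)=0$. Taking the contrapositive, the presence of an arrow $i\to j$ forces $\mu(E_j)-\mu(E_i)>2$. Next I would bound the total slope spread of a thread using the helix: the Serre functor gives $E_3=E_0\otimes\CO(3)$, so $\mu(E_3)=\mu(E_0)+3$, while $(E_1,E_2,E_3)$ is again a thread, hence a full exceptional collection of sheaves, so Lemma \ref{inc} yields $\mu(E_2)<\mu(E_3)$ and therefore $\mu(E_2)-\mu(E_0)<3$. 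If both $0\to 1$ and $1\to 2$ occurred, the local obstruction would give $\mu(E_1)-\mu(E_0)>2$ and $\mu(E_2)-\mu(E_1)>2$, whence $\mu(E_2)-\mu(E_0)>4$, contradicting the bound $<3$. Hence at least one of $a_{01},a_{12}$ vanishes.

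This dichotomy is precisely the assertion that $Q_{\BE}$ is of the first form (when $a_{12}=0$, so all arrows leave the source $0$) or of the second form (when $a_{01}=0$, so all arrows enter the sink $2$); the arrow $0\to 2$ is permitted in either case, and a collection with $a_{01}=a_{12}=0$ falls under both descriptions. It remains to record the two short exact sequences, which are now purely representation-theoretic. In the first form no arrow enters vertex $0$, so by the Remark $\Ext^1_{\BC Q_{\BE}}(S_0,S_j)=0$ for all $j$; consequently the $S_0$-isotypic part of any right $\BC Q_{\BE}$-module $M$ is a subrepresentation $\cong S_0^{m_0}$, and the quotient, supported on $\{1,2\}$ where no arrow survives, is the semisimple module $S_1^{m_1}\oplus S_2^{m_2}$. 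The second form is the mirror image: no arrow leaves vertex $2$, so $\Ext^1_{\BC Q_{\BE}}(S_j,S_2)=0$ for all $j$, making $S_2^{m_2}$ the canonical quotient, while the kernel is supported on $\{0,1\}$ with no connecting arrow and is therefore the semisimple submodule $S_0^{m_0}\oplus S_1^{m_1}$.

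The main obstacle is the middle step, the clean separation into two forms. The vanishing inputs and the representation-theoretic tail are bookkeeping, but the non-coexistence of $0\to 1$ and $1\to 2$ genuinely requires the interaction between the \emph{local} constraint ($\mu(E_j)-\mu(E_i)>2$ per arrow) and the \emph{global} constraint ($\mu(E_2)-\mu(E_0)<3$ per thread) imposed by the helix; this is where the geometry of $\CP^2$, rather than formal quiver combinatorics, does the real work.
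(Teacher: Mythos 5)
Your argument is correct and follows the paper's proof essentially verbatim: arrows can only go from a smaller to a larger index by Lemma \ref{van_geq2}, and the linear configuration $0\to 1\to 2$ is excluded because each arrow forces a slope gap $\mu(E_j)-\mu(E_i)>2$ (contrapositive of Proposition \ref{vanish3} via Corollary \ref{sym_euler_2}), while the helix relation $\mu(E_3)=\mu(E_0)+3$ combined with Lemma \ref{inc} bounds $\mu(E_2)-\mu(E_0)$ strictly below $3$. The only difference is that you also justify the two short exact sequences (via projectivity of the source/sink simple), which the paper merely asserts; that addition is correct.
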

From now on, we call the first quiver case 1, and the second quiver case 2.

\begin{proof}
	There is no arrow from $i$ to $j$ for $i\geq j$ due to Lemma \ref{van_geq2}. The only case we need to exclude is 
	$$
		\xymatrix{
			0 \ar@<1ex>[r]^{a_{01}} \ar@{}[r]|-{:} \ar@<-1ex>[r] & 1 \ar@<1ex>[r]^{a_{12}} \ar@{}[r]|-{:} \ar@<-1ex>[r] & 2
		}
	$$
	where $a_{21}, a_{10} >0$. By definition of the arrows in the secondary quiver, this is equivalently to say  
	$$
		\Ext^1_X(T_i,T_{i-1}) \neq 0,\ i=2,\ 1.
	$$
	Using the contraposition of the Proposition \ref{vanish3} we have $\mu(E_i)-\mu(E_{i-1}) > 2$, thus 
	\begin{equation} \label{slope1}
		\mu(E_2) -\mu(E_0) = \left(\mu(E_2)-\mu(E_1)\right) + \left(\mu(E_1)-\mu(E_0)\right)>4.
	\end{equation}
    However, consider the helix $\BH = (\cdots, E_0, E_1, E_2, E_3,\cdots)$ generated by $\BE$. We have $\mu(E_3) = \mu\left(E_0(3)\right) = \mu(E_0)+3$, and $\mu(E_2) < \mu(E_3) = \mu(E_0)+3$ by Lemma \ref{inc}. So (\ref{slope1}) contradicts with the above facts.
\end{proof}

Following an idea in \cite[Section 3]{HP14}, we construct the tilting objects from the secondary quiver $Q_{\BE}$. The procedure is called universal extension in loc. cit. and we will use that name.
\begin{definition}[Universal extension] \label{uni_ext}
	 Let $\BE=(E_0,E_1,E_2)$ be a full  exceptional collection consisting of sheaves in $D^b(\CP^2)$ (then $\BE$ is strong by Theorem \ref{strong}),  $T_i := \pi^* E_i$ and $T = \oplus_i T_i$.
	 \begin{enumerate}
	 	 \item 
	    For the secondary quiver of case 1 in Proposition \ref{class_of_2nd}, we consider the objects $\wtil{T_m}\in \Coh X$ ($m=1,\ 2$) which are obtained from the extensions:
	     \begin{equation}\label{univ_exten_1}
	    	\xymatrix{
		    	0 \ar[r] & T_0\otimes\Ext^1_X(T_m, T_0)^*  \ar[r] & \wtil{T_m}     \ar[r] &  T_m \ar[r]& 0.
	    	}
	     \end{equation}
     where the connection map is the dual of the canonical evaluation map in $D^b(X)$:
    $$
       can: T_m\otimes \Ext^1_X(T_m,T_0) \longrightarrow T_0[1].
    $$
	 \item
	 For the secondary quiver of case 2 in Proposition \ref{class_of_2nd}, we consider the object $\wtil{T_2}\in \Coh X$, which is obtained from the following extension:
	 \begin{equation}
	 	\xymatrix{
			0 \ar[r] & \bigoplus_{i=0, 1} T_i\otimes \Ext^1_X(T_2,T_i)^* \ar[r] & \wtil{T_2} \ar[r] & T_2 \ar[r] & 0
		}
	 \end{equation}
	 where the connection map is the dual of the canonical evaluation map in $D^b(X)$:
	 $$
	    can: \oplus_{i=0,1} T_2\otimes \Ext^1_X(T_2,T_i) \longrightarrow \oplus_{i=0,1}T_i[1].
	 $$
    \end{enumerate}
\end{definition}

\begin{theorem}\label{obj_tilt2}
    For the secondary quiver of case 1 in Proposition \ref{class_of_2nd}, the object $\wtil{T}:=T_0\oplus \wtil{T_1}\oplus \wtil{T_2}$ is a tilting object in $D^b(X)$.
    
    For the secondary quiver of case 2 in Proposition \ref{class_of_2nd}, the object $\wtil{T} :=T_0\oplus T_1\oplus \wtil{T_2}$ is a tilting object in $D^b(X)$.
\end{theorem}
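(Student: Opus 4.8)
The plan is to verify directly the two defining properties of a tilting object for $\wtil{T}$: that $\wtil{T}$ classically generates $D^b(X)$, and that $\Ext^n_X(\wtil{T},\wtil{T})=0$ for $n\neq 0$. Since every indecomposable summand of $\wtil{T}$ lies in $\Coh X$ and $X$ is smooth, the negative $\Ext$-groups vanish automatically, so only the vanishing for $n>0$ is at stake. Throughout I would feed in Lemma \ref{van_geq2}, which already gives $\Ext^{\geq 2}_X(T_i,T_j)=0$ for all $i,j$ and $\Ext^{\geq 1}_X(T_i,T_j)=0$ whenever $i\leq j$; the only possibly nonzero higher $\Ext$ among the $T_i$ are the degree-one groups $\Ext^1_X(T_j,T_i)$ with $j>i$, which are precisely the arrows recorded by the secondary quiver $Q_{\BE}$.

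Generation is the easy half. In case 1 the subcategory $\langle\wtil{T}\rangle$ contains $T_0$ directly, and the defining sequence (\ref{univ_exten_1}) realizes each $T_m$ ($m=1,2$) through a triangle whose other two terms, $\wtil{T_m}$ and a direct sum of copies of $T_0$, already lie in $\langle\wtil{T}\rangle$; hence $T_0,T_1,T_2\in\langle\wtil{T}\rangle$. In case 2 the summands $T_0,T_1$ are present and the defining sequence recovers $T_2$ in the same way. Either way $T=\bigoplus_i T_i\in\langle\wtil{T}\rangle$, and $T$ classically generates $D^b(X)$ by Lemma \ref{gen} together with the fullness of $\BE$; therefore $\langle\wtil{T}\rangle=D^b(X)$.

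For rigidity I would compute every $\Ext^n$ ($n>0$) between indecomposable summands by applying the appropriate $\Hom$-functor to the universal-extension triangles and chasing the resulting long exact sequences. In case 1 I first establish $\Ext^{>0}_X(\wtil{T_m},T_0)=0$ and $\Ext^{>0}_X(T_0,\wtil{T_m})=0$: the degree $\geq 2$ part is immediate from Lemma \ref{van_geq2}, while in degree one the relevant end term vanishes and the connecting map $\Hom_X(T_0,T_0)\otimes\Ext^1_X(T_m,T_0)\to\Ext^1_X(T_m,T_0)$ is surjective, because its identity component is exactly the contraction with the universal class $\mathrm{id}\in\End(\Ext^1_X(T_m,T_0))$ defining the extension. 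Having annihilated the $\Ext$'s into and out of $T_0$, the group $\Ext^{>0}_X(\wtil{T_m},\wtil{T_{m'}})$ collapses first to $\Ext^{>0}_X(\wtil{T_m},T_{m'})$ and then to $\Ext^{>0}_X(T_m,T_{m'})$, which vanishes because in case 1 the quiver $Q_{\BE}$ has no arrow between vertices $1$ and $2$, i.e.\ $\Ext^1_X(T_2,T_1)=0$. Case 2 runs along identical lines: one shows $\Ext^{>0}_X(\wtil{T_2},T_i)=0$ and $\Ext^{>0}_X(T_i,\wtil{T_2})=0$ for $i=0,1$, now using that the absent arrows of the case-2 quiver force $\Ext^1_X(T_j,T_i)=0$ for all $i,j\in\{0,1\}$ (in particular $\Ext^1_X(T_1,T_0)=0$), and finally reduces $\Ext^{>0}_X(\wtil{T_2},\wtil{T_2})$ to $\Ext^{>0}_X(\wtil{T_2},T_2)=0$.

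I expect the main obstacle to be the degree-one bookkeeping rather than any deep input: one must repeatedly check that the only $\Ext^1$ which could survive the reduction is precisely the one the universal extension is built to kill, and that the connecting homomorphism is genuinely surjective onto that summand. This forces a combined use of the precise shape of the secondary quiver (the list of \emph{absent} arrows distinguishing case 1 from case 2) and of the \emph{universal} choice of the extension class; every remaining contribution is already zero by Lemma \ref{van_geq2}.
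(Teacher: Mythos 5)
Your proposal is correct and follows essentially the same strategy as the paper: generation comes from the universal-extension triangles together with Lemma \ref{gen}, and rigidity is proved by applying $\Hom$-functors to those triangles, feeding in Lemma \ref{van_geq2} and the surjectivity of the Yoneda connecting map $\Hom_X(T_0,T_0)\otimes\Ext^1_X(T_m,T_0)\to\Ext^1_X(T_m,T_0)$ to kill $\Ext^{\geq 1}_X(\wtil{T_m},T_0)$. The one place where you diverge is the mixed term $\Ext^{>0}_X(\wtil{T_m},\wtil{T_{m'}})$ with $m\neq m'$: you first strip the second argument down to $T_{m'}$ (using $\Ext^{\geq 1}_X(\wtil{T_m},T_0)=0$) and then the first argument down to $T_m$ (using $\Ext^{\geq 1}_X(T_0,T_{m'})=0$), so every flanking term in your long exact sequences genuinely vanishes before you invoke the absence of arrows between vertices $1$ and $2$. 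The paper reduces in the opposite order, and its final sequence still contains the term $\Ext^i_X(T_n,T_0)\otimes\Ext^1_X(T_m,T_0)^*$, which need not vanish for $i=1$ and is passed over without comment; your ordering sidesteps this, so your reduction is actually the tighter of the two. One small conflation to fix in writing it up: the surjectivity of the connecting map is only needed for $\Ext^{\geq 1}_X(\wtil{T_m},T_0)$, whereas for $\Ext^{\geq 1}_X(T_0,\wtil{T_m})$ both flanking terms already vanish by Lemma \ref{van_geq2} and no surjectivity argument is required.
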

\begin{proof}
    We prove the case 1, the proof of the case 2 is similar.
     \begin{enumerate}
      \item
    $\wtil{T}$ generates the whole category, since $T_i \in \langle \wtil{T} \rangle$ by definition, and $\langle T=\oplus_i T_i \rangle = D^b(X)$ by Lemma \ref{gen}.
    
    \item   We firstly show that $\Ext^{\geq 1}_X (T_0,\wtil{T_m}) = 0$ for $m=1,\ 2$. We take the long exact sequence of (\ref{univ_exten_1}) associated with the functor $\Hom(T_0,-)$:
    \begin{equation*} \label{47long1}
        \begin{tikzcd}
           \cdots \arrow[r] 
            &  \Ext^1_X(T_0,\wtil{T_m}) \arrow[r] \arrow[d,phantom,""{coordinate, name=Y}] 
                & \Ext^1_X(T_0,T_m)   \arrow[dll,
                                                            "",
                                                            rounded corners,
                                                            to path={ -- ([xshift=2ex]\tikztostart.east)
                                                        |- (Y) [near end]\tikztonodes
                                                        -| ([xshift=-2ex]\tikztotarget.west)
                                                        -- (\tikztotarget)}] \\
           \Ext^2_X(T_0,T_0)\otimes\Ext^1_X(T_m,T_0)^* \arrow[r] 
            & \Ext^2_X(T_0, \wtil{T_m}) \arrow[r]
                & \cdots  .
        \end{tikzcd}
    \end{equation*}
    We have $\Ext^k_X(T_0,T_0) =0$ and $\Ext^k_X(T_0,T_m) = 0$ ($k\geq 1$) because of the second statement of Lemma \ref{van_geq2}. Thus $\Ext^{\geq 1}_X (T_0,\wtil{T_m}) = 0$ since the terms on the two sides both vanish. 
    
    \item Now we show that $\Ext^{\geq 1}_X( \wtil{T_m},T_0) = 0$ for $m=1,\ 2$. By taking the long exact sequence associated with the functor $\Hom_X(-,T_0)$:
    $$
        \begin{tikzcd}
           & \cdots \arrow[r] \arrow[d, phantom,""{coordinate,name=Z}]
            &   \Hom_X(T_0,T_0)\otimes \Ext^1_X(T_m,T_0)  \arrow[dll,
                                                            "can_*",
                                                            rounded corners,
                                                            to path={ -- ([xshift=2ex]\tikztostart.east)
                                                        |- (Z) [near end]\tikztonodes
                                                        -| ([xshift=-2ex]\tikztotarget.west)
                                                        -- (\tikztotarget)}] \\
             \Ext^1_X(T_m,T_0) \arrow[r]
                & \Ext^1_X(\wtil{T_m},T_0) \arrow[r] \arrow[d,phantom,""{coordinate, name=Y}]
                    &\Ext^1_X(T_0,T_0)\otimes \Ext^1_X(T_m,T_0) \arrow[dll,
                                                            "",
                                                            rounded corners,
                                                            to path={ -- ([xshift=2ex]\tikztostart.east)
                                                        |- (Y) [near end]\tikztonodes
                                                        -| ([xshift=-6ex]\tikztotarget.west)
                                                        -- (\tikztotarget)}] \\
            \Ext^2_X(T_m,T_0) \arrow[r]
                & \Ext^2_X(\wtil{T_m},T_0) = 0 \arrow[r] 
                    & \cdots
        \end{tikzcd}
    $$
    Note that the connection map is the Yonneda product\cite[Definition 4.16]{May}
    \begin{eqnarray*}
        can_*: \Hom_X(T_0,T_0)\otimes \Ext^1_X(T_m,T_0) &  \longrightarrow& \Ext^1_X(T_m,T_0) \\
        id \otimes \eta &\longmapsto & \eta.
    \end{eqnarray*}
    Thus $can_*$ is surjective, then we can see that $\Ext^1_X(\wtil{T_m},T_0) = 0$. 
    
    Since $\Ext^{\geq 2}_X(T_m,T_0) = 0$ due to the first statement of Lemma \ref{van_geq2} and $\Ext^{\geq 1}_X(T_0,T_0) = 0$, we have that $\Ext^{\geq 2}_X(\wtil{T_m},T_0) = 0$ since terms on the two sides both vanish.

    \item Then we check that $\Ext^{\geq 1}_X(\wtil{T_m}, \wtil{T_m}) = 0$. By taking the long exact sequence associated with functor $\Hom_X(\wtil{T_m},-)$ ($i\geq 1$):
        \begin{equation}\label{47long2}
             \rightarrow  \Ext^i_X(\wtil{T_m},T_0)\otimes \Ext^1_X(T_m,T_0)^* \rightarrow \Ext^i_X(\wtil{T_m},\wtil{T_m}) \rightarrow \Ext^i_X(\wtil{T_m}, T_m) \rightarrow
        \end{equation}
        since $\Ext^i_X(\wtil{T_m},T_0 ) = 0$ by the above arguments,  so it is equivalent to check the vanishing of $ \Ext^i_X(\wtil{T_m},T_m)$. 
        
        Again we take the  functor $\Hom_X(-,T_m)$ of the universal extension sequence:
        $$
                \rightarrow \Ext^i_X(T_m,T_m)  \rightarrow \Ext^i_X(\wtil{T_m},T_m)    \rightarrow \Ext^i_X(T_0,T_m)\otimes \Ext^1_X(T_m,T_0) \rightarrow
        $$
        note that $\Ext^{\geq 1}_X(T_0,T_m) = 0$ since $0< m=1,\ 2$ and $\Ext^i_X(T_m,T_m) = 0$ because of Lemma \ref{van_geq2}. 
       The cohomology groups on the both sides vanish, so we have $\Ext^i(\wtil{T_m},T_m) = 0$ for $i\geq 1$. Return back to (\ref{47long2}), and we obtain that $\Ext^i_X(\wtil{T_m},\wtil{T_m}) = 0$.
    \item Finally we check that $\Ext^{\geq 1} _X(\wtil{T_m},\wtil{T_n}) = 0$ when $m,\ n \in\{1,\ 2\}$ ($m\neq n$).  Consider the universal extension of $T_n$ and we take the  functor $\Hom_X(-,\wtil{T_m})$, then we have a long exact sequence:
        $$
\rightarrow \Ext^i_X(T_n, \wtil{T_m}) \rightarrow \Ext^i_X(\wtil{T_n},\wtil{T_m}) \rightarrow
                        \Ext^i_X(T_0,\wtil{T_m})\otimes \Ext^1_X(T_n,T_0) 
                        \rightarrow
        $$
        because $\Ext^{\geq 1}_X(T_0,\wtil{T_m} ) = 0 $ by the previous arguments, so it is equivalent to check that $\Ext^i_X(T_n,\wtil{T_m}) = 0$.
        
        Again we take the functor $\Hom_X(T_n,-)$ to the universal extension of $T_m$:
        $$  
            \rightarrow \Ext^i_X(T_n,T_0)\otimes \Ext^1_X(T_m,T_0)^* \rightarrow \Ext^i_X(T_n,\wtil{T_m}) \rightarrow \Ext^i_X(T_n,T_m)  \rightarrow
        $$
        since $\Ext^{\geq 1}_X (T_n,T_m) = 0$ by our assumption of  case $1$ , thus we obtain that $$\Ext^i_X(T_n, \wtil{T_m}) = 0.$$
\end{enumerate}
    
\end{proof}

%%%%%%%%%%%%%%%%%%%%%%%%%%%%%%%%%%%%%%
\section{Functor $\CF_Q$}
The object of this section is to construct a functor from the category of representations of the secondary quiver $\Rep Q_{\BE}$ to $\Coh(X)$, and we obtain that the functor sends tilting objects to tilting objects and simple modules $S_i$ to $T_i$.

We have the following simple lemma:
\begin{lemma} \label{alg_mor}
  Let $\BE$ be a full exceptional collection on $\CP^2$ consisting of sheaves, then there is an injective map of algebras  from $\BC Q_{\BE}$ to $\End \wtil{T}$ where $\wtil{T}$ is the object obtained by the universal extension as in the Definition \ref{uni_ext} and Theorem \ref{obj_tilt2}:
\end{lemma}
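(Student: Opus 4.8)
The plan is to define the homomorphism on the two natural families of generators of the path algebra---the vertex idempotents and the arrows---and then verify multiplicativity and injectivity. I treat the case-$1$ quiver of Proposition \ref{class_of_2nd}; case $2$ is identical after relabelling. Write $\wtil{T_0}:=T_0$, so that $\wtil{T}=\wtil{T_0}\oplus\wtil{T_1}\oplus\wtil{T_2}$, and let $\epsilon_i\in\End\wtil{T}$ denote the idempotent projecting onto the summand $\wtil{T_i}$; set $\Phi(e_i):=\epsilon_i$. For an arrow $\alpha$ from $0$ to $m$ (with $m\in\{1,2\}$), which by construction is a chosen basis vector $\eta_\alpha$ of $\Ext^1_X(T_m,T_0)$, the dual basis vector $\eta_\alpha^*$ yields an inclusion
$$
T_0 \;=\; T_0\otimes\eta_\alpha^* \;\hookrightarrow\; T_0\otimes\Ext^1_X(T_m,T_0)^* \;\hookrightarrow\; \wtil{T_m}
$$
coming from the universal extension (\ref{univ_exten_1}); I set $\Phi(\alpha)$ to be this morphism, viewed as an element of $\Hom(\wtil{T_0},\wtil{T_m})\subset\End\wtil{T}$.

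Next I would check that $\Phi$ extends to an algebra homomorphism. By Proposition \ref{class_of_2nd} every arrow of $Q_{\BE}$ emanates from the single vertex $0$ (case $1$) or terminates at the single vertex $2$ (case $2$); in either shape no two arrows are composable, so $\BC Q_{\BE}$ has no paths of length $\geq 2$ and is spanned as a vector space by the $e_i$ together with the arrows. Multiplicativity therefore reduces to the idempotent--arrow identities, which are immediate from $\Phi(\alpha)=\epsilon_m\circ\Phi(\alpha)\circ\epsilon_0$ (valid because $\Phi(\alpha)$ is defined on the summand $\wtil{T_0}$ and lands in $\wtil{T_m}$), together with the fact that every product of two of the chosen arrows is zero on both sides. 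With the paper's module conventions this realises $\Phi$ as a map into $\End\wtil{T}$ rather than its opposite.

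Finally I would establish injectivity, and this is the only step requiring an actual argument. The map $\Phi$ respects the idempotent decomposition of $\BC Q_{\BE}$ into components $e_i(\BC Q_{\BE})e_j$, each of which it carries into the corresponding morphism space between the summands $\wtil{T_i}$, $\wtil{T_j}$; so it suffices to prove injectivity componentwise. On the diagonal components the images $\epsilon_i$ are nonzero and pairwise orthogonal because the summands $\wtil{T_i}$ are nonzero, so the three idempotents remain linearly independent. On an off-diagonal component spanned by the arrows from $0$ to $m$, a linear combination $\sum_\alpha c_\alpha\,\Phi(\alpha)$ equals the composite of the map $T_0 \to T_0\otimes\Ext^1_X(T_m,T_0)^*$ given by $\sum_\alpha c_\alpha\,\eta_\alpha^*$ with the monomorphism $T_0\otimes\Ext^1_X(T_m,T_0)^*\hookrightarrow\wtil{T_m}$ of the universal extension; since $\{\eta_\alpha^*\}$ is a basis and $T_0\neq 0$, this composite is nonzero unless all $c_\alpha=0$. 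The crux---the main obstacle---is exactly this linear independence: it rests on the inclusion in (\ref{univ_exten_1}) being a genuine subobject inclusion, which is what makes the extension ``universal'' and forces $\Phi$ to be injective.
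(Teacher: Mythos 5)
Your proposal is correct and follows essentially the same route as the paper: send the vertex idempotents to the projections onto the summands of $\wtil{T}$ and each arrow to the corresponding component of the inclusion $T_0\otimes\Ext^1_X(T_m,T_0)^*\hookrightarrow\wtil{T_m}$ from the universal extension, with injectivity coming from this inclusion being a monomorphism. You supply details the paper leaves implicit (no composable arrows, hence no paths of length $\geq 2$; componentwise linear independence), but the argument is the same.
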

\begin{proof}
   The algebra $\End \wtil{T}$ contains three idempotents $\wtil{e_i}$ which are projection maps $\wtil{T} \rightarrow \wtil{T_i}$. We denote the idempotents in $\BC Q_{\BE}$ by $e_i$. 
   \begin{enumerate}
       \item For case 1,

            $$
				\xymatrix{
					& 0\ar@<1ex>[dl] \ar@{}[dl]|-{..} \ar@<-1ex>[dl]_{a_{01}} \ar@<1ex>[dr]^{a_{02}} \ar@{}[dr]|-{..} \ar@<-1ex>[dr] & \\
					1 & & 2 
									}
			$$
	
	 define a map $\iota: \BC Q_{\BE} \longrightarrow \End \wtil{T}$ which sends $e_i $ to  $  \wtil{e_i}$; and the set of arrows between $0$ and $m$ ($m= 1,\ 2$) to one  component of
	$$
	   \begin{tikzcd}
	         0 \arrow[r] 
	            & T_0\otimes\Ext^1_X(T_m,T_0)^*\arrow[r]
	                & \wtil{T_m}
        \end{tikzcd}
	$$
	 in the universal extension sequence.
	 
	Then $\iota$ is a map of algebras. $\iota$ is injective since $\Ext^1_X(T_m,T_0)\subset \Hom_X(T_0, \wtil{T_m})$ by definition of universal extension sequence;
	\item similarly for case 2,
			$$
				\xymatrix{
					0  \ar@<1ex>[dr] \ar@{}[dr]|-{..} \ar@<-1ex>[dr]_{a_{02}}&   & 1 \ar@<1ex>[dl]^{a_{12}} \ar@{}[dl]|-{..} \ar@<-1ex>[dl]\\
				 & 2 & 
				}
			$$
		then we define a map  $\iota: \BC Q_{\BE} \longrightarrow \End \wtil{T}$ which sends $e_i$ to $\wtil{e_i}$ and the set of arrows between $i$ and $2$ ($i= 0,\ 1$) are mapped to one component of
		$$
		\begin{tikzcd}
	         0 \arrow[r] 
	            & \bigoplus_{i=0,1}T_i\otimes \Ext^1_X(T_2,T_i)^* \arrow[r]
	                & \wtil{T_2}
        \end{tikzcd}
		$$
	    in the universal extension sequence.
	  
	\end{enumerate}
\end{proof}

\begin{definition}
We denote $A:= \BC Q_{\BE}$ and $B:=\End \wtil{T}$, where $\wtil{T}$ is defined in theorem \ref{obj_tilt2}. Then we define a functor $\CF_Q$ from 
\begin{eqnarray*}
	\CF_Q: \text{mod-} A &\longrightarrow& \Coh \left(X\right) 
\end{eqnarray*}
by the composition:
$$
    \xymatrix{
        \text{mod-}A \ar[r]^{-\otimes B} & \text{mod-}B \ar[r]^-{\otimes_B \wtil{T} }  & \Coh(X)
    }
$$
\end{definition}
We have
\begin{proposition}
    $\CF_Q$ is an exact functor.
\end{proposition}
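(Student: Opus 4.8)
The plan is to present $\CF_Q$ as a single tensor functor and reduce its exactness to the short-exactness of the universal extension sequences of Definition~\ref{uni_ext}. First I would note that, by associativity of the tensor product together with $B\otimes_B\wtil T\cong \wtil T$, the composite defining $\CF_Q$ collapses to
$$
\CF_Q(M)\;\cong\; M\otimes_A \wtil T,
$$
where $\wtil T$ is viewed as a left $A$-module through the algebra homomorphism $\iota\colon A=\BC Q_{\BE}\to B=\End\wtil T$ of Lemma~\ref{alg_mor}. In this form $\CF_Q$ is visibly a composition of right exact functors, hence right exact, and the entire content of the proposition is that $\CF_Q$ also preserves monomorphisms.

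To control the failure of left exactness I would use that $A=\BC Q_{\BE}$ is hereditary: by Proposition~\ref{class_of_2nd} the secondary quiver is acyclic and it carries no relations, so $\operatorname{gldim}A\le 1$ and in particular $\Tor^A_{i}(-,\wtil T)=0$ for all $i\ge 2$. Consequently $\CF_Q$ is exact if and only if $\Tor^A_1(M,\wtil T)=0$ for every $M\in\text{mod-}A$. A standard induction on length reduces this to the simple modules: for any short exact sequence $0\to M'\to M\to M''\to 0$ the long exact $\Tor$-sequence forces $\Tor^A_1(M,\wtil T)=0$ as soon as it vanishes on $M'$ and $M''$, and every finitely generated $A$-module admits a filtration with simple subquotients. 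Hence it suffices to prove $\Tor^A_1(S_i,\wtil T)=0$ for each simple $S_i$.

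The core step is to compute $\Tor^A_1(S_i,\wtil T)$ from minimal projective resolutions. Writing $P_i=e_iA$ for the indecomposable projectives, one has $\CF_Q(P_i)\cong \wtil e_i\,\wtil T=\wtil{T_i}$ because $\iota(e_i)=\wtil e_i$. For a simple supported at a source vertex of $Q_{\BE}$ the projective cover is the simple itself, and $\Tor^A_1$ vanishes trivially. For the remaining simples — $S_1,S_2$ in case~1 and $S_2$ in case~2 — the minimal resolution has the shape $0\to \bigoplus_{j} P_j^{\,a_{jm}}\to P_m\to S_m\to 0$, with differential given by left multiplication by the arrows of $Q_{\BE}$ ending at $m$. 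Applying $\CF_Q$ and using that $\iota$ carries these arrows to the components of the connecting map of the universal extension (Lemma~\ref{alg_mor}), I would identify the resulting two-term complex with the universal extension sequence
$$
0\to \bigoplus_{j} T_j\otimes\Ext^1_X(T_m,T_j)^*\to \wtil{T_m}\to T_m\to 0
$$
of Definition~\ref{uni_ext}. Since this is by construction a genuine short exact sequence of coherent sheaves, its left-hand arrow is a monomorphism and $\Tor^A_1(S_m,\wtil T)=0$, completing the reduction.

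The step I expect to be the main obstacle is precisely this last identification: one must verify that tensoring the resolution differential $\bigoplus_j P_j\to P_m$ with $\wtil T$ reproduces exactly the inclusion appearing in the universal extension, and not merely some map with the same source and target. This is where the explicit description of $\iota$ in Lemma~\ref{alg_mor} — matching a chosen basis of each $\Ext^1_X(T_m,T_j)$ with the arrows $j\to m$ and with the corresponding summands of $\wtil{T_m}$ — must be used with care. Once the two maps are seen to coincide, the vanishing of $\Tor^A_1$ on all of $\text{mod-}A$ follows, and with it the exactness of $\CF_Q$.
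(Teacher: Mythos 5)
Your proposal is correct and follows essentially the same route as the paper: both reduce exactness to the simple modules via the finite length of $\text{mod-}A$, and both apply $\CF_Q$ to the length-one projective resolution $0\to \bigoplus_j P_j^{a_{jm}}\to P_m\to S_m\to 0$ and identify the result, via the explicit description of $\iota$ from Lemma \ref{alg_mor}, with the universal extension short exact sequence. The paper phrases the conclusion as $\BL\CF_Q$ preserving the heart (working with morphisms of distinguished triangles and the commutativity $\CF_Q(\sum_k x^k_{0m}\cdot)\cong can^*$), which is equivalent to your $\Tor^A_1$-vanishing formulation.
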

\begin{proof}
     We show the result only for case 1 in Proposition \ref{class_of_2nd}, since  case 2 is quite similar. In this case $\wtil{T} = T_0\oplus \wtil{T_1}\oplus \wtil{T_2}$.
    
    Since $\text{mod-}A$ is a category of finite length (where each object has a finite Jordan-H\"{o}lder filtration), we only need to show that the derived functor of $\CF_Q$ sends  $S_i$ to the objects in $\Coh X$, that is, it preserves the t-structure. 
    
    We consider the left derived functor of $\CF_Q$:
    $$
        \begin{tikzcd}
            \mathbb{L} \CF_Q : D^b(\text{mod-}A) \arrow[r,"-\otimes^{\BL}_A B"] 
                & D^b(\text{mod-} B) \arrow[r,"-\otimes^{\BL}_B \wtil{T}"] 
                    & D^b(X) 
        \end{tikzcd}
    $$
    
    In this case $S_0 = P_0$ where $P_0:= e_0 A$. We also denote the canonical projective modules in $\text{mod-}B$ by $\wtil{P_i} = \wtil{e_i} B \cong \Hom_X(\wtil{T}, \wtil{T_i})$ for $i = 1,\ 2$ and $\wtil{P_0} \cong \Hom_X(\wtil{T}, T_0)$. Then by definition
    $$
        \BL \CF_Q (P_0) \cong \wtil{P_0} \otimes^{\BL}_B \wtil{T} \cong \wtil{T_0}
    $$
    in $D^b(X)$. Thus $\CF_Q$ is exact on $S_0$.
    
    For $S_m$ where $m=1,\ 2$. We have distinguished triangle in $D^b(\text{mod-}A)$:
    $$
        \xymatrix@C=4em{
            S_0^{a_{0m}} = S_0\otimes \Ext^1_X(S_m,S_0)^* \ar[r] & P_m \ar[r] & S_m \ar[r]^-{\eta=\sum_k x^k_{0m}.} & S_m[1]
        }
    $$
    where $x^k_{0m}$ denotes the arrow from the vertex $0$ to $m$.
    
    By taking the functor $-\otimes^{\BL}_A B$,  we have a distinguished triangle in $D^b(\text{mod-}B)$:
    \begin{equation}\label{exact1}
        \xymatrix@C=4em{
          \wtil{P_0}^{a_{0m}} \ar[r] & \wtil{P_m} \ar[r] & S_m\otimes^{\BL}_A B \ar[r]^{\iota\left(\sum_k x^k_{0m} \right).} & \wtil{P_0}^{a_{0m}}[1]
        }
    \end{equation}

    Then by taking the functor $-\otimes_B^{\BL} \wtil{T}$, and under the canonical isomorphisms $\wtil{P_i}\otimes_B \wtil{T} \cong \wtil{T_i}$, we have morphisms between distinguished triangles in $D^b(X)$:
    $$
        \xymatrix@C=4em{
        \wtil{P_0}^{a_{0m}}\otimes \wtil{T} \ar[d]_{\cong} \ar[r] & \wtil{P_m}\otimes \wtil{T} \ar[d]_{\cong}\ar[r] & S_m\otimes_A B\otimes_B \wtil{T} \ar[d]^f \ar[r]^-{\CF_Q(\sum_k x^k_{0m}.)} & \wtil{P_0}^{a_{0m}}\otimes \wtil{T}[1]\ar[d] \\
        \wtil{T_0}^{a_{0m}} \ar[r] & \wtil{T_m} \ar[r] & T_m \ar[r]^-{can^*} &  \wtil{T_0}^{a_{0m}}[1]
        }
    $$
 the diagram is commutative since $\CF_Q(\sum_k x^k_{0m}.) \cong can^*$ by definition. 
 $f$ is induced from the morphisms between the distinguished triangles. Thus we get $T_m \cong S_m\otimes_A B\otimes_B \wtil{T} =  \CF_Q(S_m)$ in $D^b(X)$. Moreover, since the lower triangle all lies in the canonical heart of $D^b(X)$:  $\Coh(X)$. Thus we actually have a short exact sequence:
    $$
        \xymatrix{
            0\ar[r] & \CF_Q(S_m)^{a_{0m}}\cong \wtil{T_0}^{a_{0m}} \ar[r] & \CF_Q(P_m) \cong  \wtil{T_m} \ar[r] & \CF_Q(S_m)\cong T_m \ar[r] & 0.
        }
    $$
    So we have tested the exactness on any simple module $S_m$ in $\text{mod-} A$, we have shown the exactness of $\CF_Q$.
\end{proof}
The proof in the above theorem also implies
\begin{corollary}\label{corr_simp}
    For simple modules $S_i\in \text{mod-} A$, we have $\CF_Q(S_i) = T_i$.
\end{corollary}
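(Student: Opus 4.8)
The plan is to read off the value of $\CF_Q$ on each simple module directly from the computations already carried out in the proof of the preceding proposition, treating the source vertex separately from the others. I work throughout in case 1 (case 2 being entirely symmetric), so $\wtil{T} = T_0\oplus\wtil{T_1}\oplus\wtil{T_2}$ and in particular $\wtil{T_0}=T_0$.

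First I would dispatch the source vertex $0$. In case 1 this vertex has no incoming arrows, so the simple module $S_0$ coincides with the projective $P_0 = e_0 A$. Under $-\otimes_A B$ it goes to $\wtil{P_0}=\wtil{e_0}B\cong\Hom_X(\wtil{T},T_0)$, and the standard identification $\wtil{P_i}\otimes_B\wtil{T}\cong\wtil{T_i}$ then yields $\CF_Q(S_0)\cong\wtil{T_0}=T_0$. No derived correction intervenes, since $P_0$ is already projective.

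For the remaining simples $S_m$ with $m=1,2$ I would use the two-term projective presentation coming from the quiver,
$$0\longrightarrow S_0^{a_{0m}}\longrightarrow P_m\longrightarrow S_m\longrightarrow 0,$$
whose connecting class is $\eta=\sum_k x^k_{0m}$, the sum over the arrows from $0$ to $m$. Applying $\BL\CF_Q$ to the associated triangle and invoking $\wtil{P_i}\otimes_B\wtil{T}\cong\wtil{T_i}$, I would compare the resulting triangle with the universal extension triangle attached to $\wtil{T_m}$. The decisive point is that the algebra map $\iota$ of Lemma \ref{alg_mor} was constructed precisely so that $\CF_Q(\eta)$ is identified with the dual evaluation map $can^*$; the two triangles are therefore isomorphic, and $\BL\CF_Q(S_m)$ is identified with $T_m$. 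Because the universal extension is a genuine short exact sequence of sheaves, $\BL\CF_Q(S_m)$ is concentrated in degree $0$, so the underived value is $\CF_Q(S_m)=T_m$.

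The only step needing care is this matching of connecting maps, namely that $\CF_Q(\eta)$ and $can^*$ coincide; but that is exactly what the construction of $\iota$ records, sending a chosen basis of $\Ext^1_X(T_m,T_0)$ to the components of the universal extension sequence. Granting the identification, the comparison of distinguished triangles is purely formal, and assembling the case $i=0$ with the cases $i=1,2$ finishes the proof.
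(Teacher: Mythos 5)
Your proposal is correct and follows essentially the same route as the paper: the paper derives this corollary from its proof of the exactness of $\CF_Q$, where $S_0=P_0$ is handled via $\wtil{P_0}\otimes_B\wtil{T}\cong T_0$ and the simples $S_m$ ($m=1,2$) are handled by applying $\BL\CF_Q$ to the projective presentation and matching the resulting triangle with the universal extension triangle through the identification $\CF_Q\bigl(\sum_k x^k_{0m}\bigr)\cong can^*$ built into $\iota$. The only point you flag as delicate, the matching of connecting maps, is treated at exactly the same level of detail in the paper.
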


Here we say $M$ is a tilting object in some abelian category, we mean $M$ is a tilting object in its corresponding derived category.
\begin{theorem}\label{F_Q}
	Let $\BE$ be a full exceptional collection consisting of sheaves on $\CP^2$, $Q_{\BE}$ be its secondary quiver and $A$ be the path algebra of $Q_{\BE}$. Then $\CF_Q$ sends tilting objects in  $\text{mod-} A$ to tilting objects in $\Coh (X)$.
\end{theorem}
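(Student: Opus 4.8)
The plan is to verify the two defining properties of a tilting object for $\CF_Q(M)$ — that it classically generates $D^b(X)$ and that $\Ext^n_X(\CF_Q M, \CF_Q M)=0$ for $n\neq 0$ — starting from the fact that, since $A=\BC Q_{\BE}$ is the path algebra of an acyclic quiver and hence hereditary, a tilting object $M\in\mathrm{mod}\text{-}A$ is a classical tilting module: it is a classical generator of $D^b(A)$ and satisfies $\Ext^1_A(M,M)=0$ (higher and negative self-extensions vanish automatically).

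Generation is immediate. Each simple $S_i$ lies in the thick subcategory $\langle M\rangle$ because $M$ is a classical generator, and since $\CF_Q$ is exact it induces a triangulated functor $\BL\CF_Q$, so $T_i=\BL\CF_Q(S_i)\in\langle\CF_Q(M)\rangle$ by Corollary \ref{corr_simp}. As $T=\bigoplus_i T_i$ classically generates $D^b(X)$ by Lemma \ref{gen}, so does $\CF_Q(M)$. For the Ext-vanishing I would first resolve $\CF_Q(M)$ by summands of $\wtil T$: the hereditary hypothesis gives a length-one projective resolution $0\to Q^{-1}\to Q^0\to M\to 0$ with each $Q^i$ a sum of indecomposable projectives $P_k$, and applying the exact functor $\CF_Q$ together with $\CF_Q(P_k)=\wtil{T_k}$ produces a short exact sequence
\begin{equation*}
0\to\CF_Q(Q^{-1})\to\CF_Q(Q^0)\to\CF_Q(M)\to 0
\end{equation*}
in $\Coh X$ with $\CF_Q(Q^i)\in\mathrm{add}(\wtil T)$. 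Feeding this into $\Hom_X(\wtil T,-)$ and using that $\wtil T$ is tilting (Theorem \ref{obj_tilt2}) yields $\Ext^{>0}_X(\wtil T,\CF_Q M)=0$; since both objects are sheaves the negative Ext also vanish, so $\CF_Q(M)$ lies in the heart $\{W:\Ext^{\neq 0}_X(\wtil T,W)=0\}$ of the t-structure attached to $\wtil T$. This argument applies verbatim to every object of $\mathrm{mod}\text{-}A$, so under the derived Morita equivalence $G=-\otimes^{\BL}_B\wtil T:D^b(\mathrm{mod}\text{-}B)\xrightarrow{\sim}D^b(X)$ (with $B=\End_X\wtil T$) the object $N:=G^{-1}(\CF_Q M)=M\otimes^{\BL}_A B$ is concentrated in degree zero; in particular $\Tor^A_{>0}(M,B)=0$, the algebra $B$ is projective as a left $A$-module, and $N=M\otimes_A B$ is an honest $B$-module with $\mathrm{pd}_B N\leq 1$.

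Next I would apply $\Hom_X(-,\CF_Q M)$ to the short exact sequence above. The vanishing $\Ext^{>0}_X(\mathrm{add}\,\wtil T,\CF_Q M)=0$ forces $\Ext^{\geq 2}_X(\CF_Q M,\CF_Q M)=0$ at once and identifies $\Ext^1_X(\CF_Q M,\CF_Q M)$ with the cokernel of the map $\Hom_X(\CF_Q Q^0,\CF_Q M)\to\Hom_X(\CF_Q Q^{-1},\CF_Q M)$ induced by the inclusion $\CF_Q Q^{-1}\hookrightarrow\CF_Q Q^0$. Transporting through $G$ and using the adjunction $(-\otimes^{\BL}_A B)\dashv\mathrm{Res}_A$, this cokernel is
\begin{equation*}
\Ext^1_X(\CF_Q M,\CF_Q M)\;\cong\;\Ext^1_B(M\otimes_A B,\,M\otimes_A B)\;\cong\;\Ext^1_A\!\big(M,\ \mathrm{Res}_A(M\otimes_A B)\big).
\end{equation*}
Everything therefore reduces to the single claim that $\mathrm{Res}_A(M\otimes_A B)$, regarded as a right $A$-module via $\iota:A\hookrightarrow B$ (Lemma \ref{alg_mor}), lies in $\mathrm{Gen}(M)=\{Y:\Ext^1_A(M,Y)=0\}$, i.e. is generated by $M$.

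This reduced claim is the main obstacle, and it is exactly where the rigidity $\Ext^1_A(M,M)=0$ must combine with the explicit shape of $B$. I would attack it through the bimodule $B=\End_X\wtil T$: knowing already that $B$ is projective as a left $A$-module, it remains to analyse the right $A$-module structure of $M\otimes_A B$ using the description of $\wtil T$ from the universal extensions of Definition \ref{uni_ext} in the two cases of Proposition \ref{class_of_2nd}, so as to exhibit a surjection onto it from a direct sum of copies of $M$. Granting this, both defining properties hold, and $G$ then transports the resulting tilting $B$-module $M\otimes_A B$ to the tilting object $\CF_Q(M)=(M\otimes_A B)\otimes_B\wtil T$ in $D^b(X)$, completing the proof.
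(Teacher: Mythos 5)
Your proposal is incomplete at exactly the point where the real content of the theorem lies. The generation argument, the vanishing of $\Ext^{\geq 2}_X(\CF_Q M,\CF_Q M)$ via the length-one projective resolution $0\to Q^{-1}\to Q^0\to M\to 0$, and the identification of $\Ext^1_X(\CF_Q M,\CF_Q M)$ with $\Ext^1_A\bigl(M,\mathrm{Res}_A(M\otimes_A B)\bigr)$ are all sound (and organizing the Morita-theoretic bookkeeping through the tilting heart of $\wtil{T}$ is clean). But the final claim --- that $\mathrm{Res}_A(M\otimes_A B)$ lies in $\mathrm{Gen}(M)=\{Y:\Ext^1_A(M,Y)=0\}$ --- is precisely the statement that $\CF_Q M$ is rigid, and you explicitly defer it (``the main obstacle \ldots Granting this''). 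Nothing in your reduction makes it easier than the original problem: $M\otimes_A B$ is visibly generated by $M$ only as a right $B$-module, not as a right $A$-module through the embedding $\iota:A\hookrightarrow B$ of Lemma \ref{alg_mor}, and $\mathrm{Gen}(M)$ is a proper subclass of $\text{mod-}A$. As written the proof therefore has a genuine gap.

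For comparison, the paper closes this gap by a concrete diagram comparison rather than an abstract reduction. By Proposition \ref{class_of_2nd} a tilting module $R$ sits in $0\to S_0^{m_0}\to R\to S_1^{m_1}\oplus S_2^{m_2}\to 0$, which $\CF_Q$ carries to $0\to T_0^{m_0}\to \CF_Q(R)\to T_1^{m_1}\oplus T_2^{m_2}\to 0$. Applying the various $\Hom$ functors to both sequences yields two commutative ladders of long exact sequences; the defining isomorphism $\Ext^1_A(S_i,S_j)\cong\Ext^1_X(T_i,T_j)$ of the secondary quiver identifies the crucial middle terms, so the surjectivity of the map $\eta$ whose image kills $\Ext^1_X(\CF_Q R,\CF_Q R)$ is pulled back to the surjectivity of its quiver analogue $\wtil{\eta}$, which in turn is a diagram chase from $\Ext^1_A(R,R)=0$. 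To complete your version you would need an analogous explicit computation --- using the universal extensions of Definition \ref{uni_ext} in the two cases of Proposition \ref{class_of_2nd} --- to actually produce the asserted surjection $M^{\oplus k}\twoheadrightarrow\mathrm{Res}_A(M\otimes_A B)$; at that point you would be redoing the paper's argument in different packaging.
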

\begin{proof}
	We prove  case 1, the proof of case 2 is essentially the same. Firstly we show the property of generating. Let $R\in \text{mod-} A$ be a generator for $D^b(\text{mod-}A)$, then $S_i\in \langle R \rangle$ for any $i$. Then under the correspondence of $\CF_Q$, we have 
	$$
	    \CF_Q(S_i) = T_i\in \langle \CF_Q(R) \rangle
	$$
	in $D^b(X)$ due to Corollary \ref{corr_simp}. Now because $\bigoplus_{i=0}^2 T_i$ generates $D^b(X)$, thus we have  that $\CF_Q(R)$ generates the whole category.

	Given a tilting object $R\in \text{mod-}A$, we assume $R$ fits into 
	\begin{equation}\label{ses_R}
		\xymatrix{
			0 \ar[r] & S_0^{m_0} \ar[r] & R \ar[r]& S_1^{m_1}\oplus S_2^{m_2} \ar[r] & 0,
		}
	\end{equation}
	and $\Ext^1_A(R,R) = 0$. For any pair of objects $M,\ N\in\text{mod-}A$, $\Ext^{\geq 2}_A(M,N) = 0$ since $A$ is the path algebra of an acyclic quiver without relations.  For the corresponding object $\wtil{T} = \CF_Q(R)$ under the above definition,  we have 
	\begin{equation} \label{ses_T}
		\xymatrix{
			0 \ar[r] & T_0^{m_0} \ar[r] & \wtil{T}\ar[r] & T_1^{m_1}\oplus T_2^{m_2} \ar[r] & 0.
		}
	\end{equation}
    By applying the functors $\Hom_X(-, T_1^{m_1}\oplus T_2^{m_2})$, $\Hom_X(-,T_0^{m_0})$ and $\Hom_X(-, \wtil{T})$ to (\ref{ses_T}), we get the corresponding long exact sequences and put them in the first, second and third row separately. Finally we get the commutative diagram: 
	$$
	\resizebox{\displaywidth}{!}{%
		\xymatrix@R=3em{
		 0 \ar[r] \ar[d] 
		    & \End_X(T_1^{m_1}\oplus T_2^{m_2}) \ar[r]\ar[d]^-{\alpha_1}
		        & \Hom_X(\wtil{T},T_1^{m_1}\oplus T_2^{m_2}) \ar[r] \ar[d]^{\gamma_1} 
		            & \Hom_X(T_0^{m_0}, T_1^{m_1}\oplus T_2^{m_2}) \ar[d] \\
		 \End_X(T_0^{m_0}) \ar[r]^-{\beta_1} \ar[d] 
		    & \Ext^1_X(T_1^{m_1}\oplus T_2^{m_2}, T_0^{m_0}) \ar[d]^{\alpha_2}\ar[r]^{\beta_2} 
		        & \Ext^1_X(\wtil{T},T_0^{m_0}) \ar[r]\ar[d]^{\gamma_2} 
		            & \Ext^1_X(T_0^{m_0},T_0^{m_0}) = 0 \ar[d] \\
		 \Hom_X(T_0^{m_0},\wtil{T}) \ar[r]^-{\theta_1} 
		    & \Ext^1_X(T_1^{m_1}\oplus T_2^{m_2}, \wtil{T}) \ar[r]^-{\theta_2} 
		        & \Ext^1_X(\wtil{T},\wtil{T}) \ar[r]\ar[d] 
		            & \Ext^1_X(T_0^{m_0},\wtil{T}) = 0  \\
		    &	
		        &	\Ext^1_X(\wtil{T}, T_1^{m_1}\oplus T_2^{m_2}) = 0 
		             &
		}
		}%
	$$
    where the morphisms on the columns are obtained in the following way: we first apply the functors $\Hom_X(T_0^{m_0},-)$, $\Hom_X(T_1^{m_1}\oplus T_2^{m_2},-)$, $\Hom_X(\wtil{T},-)$ and $\Hom_X(T_0^{m_0},-)$ on (\ref{ses_T}) separately, then take the  long exact  sequences. The diagram is commutative due to the naturality of derived functors. 
    
    We have a chain complex (not necessarily exact)
	\begin{equation} \label{ses1}
		\xymatrix@C=3em{
			\End_X(T_1^{m_1}\oplus T_2^{m_2}) \bigoplus \End_X(T_0^{m_0}) \ar[r]^-{         \eta = \binom{\alpha_1}{\beta_1}} 
			    & \Ext^1_X(T_1^{m_1}\oplus T_2^{m_2}, T_0^{m_0}) \ar[r]^-{f = \theta_2 \circ \alpha_2}_-{=\gamma_2\circ \beta_2} 
			        & \Ext^1_X(\wtil{T},\wtil{T})
		}
	\end{equation}
	 where $f$ is surjective. To obtain $\Ext^1_X(\wtil{T},\wtil{T}) = 0$, we need to show that $\eta$ is surjective, since $f\circ\eta = 0$. So we consider the corresponding diagram in $\Rep Q_{\BE}$ by applying Hom functors similarly to (\ref{ses_R}):
	 $$\resizebox{\displaywidth}{!}{%
		\xymatrix{
		 0 \ar[r] \ar[d] 
		    & \End_A(S_1^{m_1}\oplus S_2^{m_2}) \ar[r]^{\wtil{\phi}}\ar[d]^{\wtil{\alpha_1}} 
		        & \Hom_A(R,S_1^{m_1}\oplus S_2^{m_2}) \ar[r] \ar[d]^{\wtil{\gamma_1}}
		            & \Hom_A(S_0^{m_0}, S_1^{m_1}\oplus S_2^{m_2})=0 \ar[d]\\
		 \End_A(S_0^{m_0}) \ar[r]^{\wtil{\beta_1}} \ar[d] 
		    & \Ext^1_A(S_1^{m_1}\oplus S_2^{m_2}, S_0^{m_0}) \ar[d]^{\wtil{\alpha_2}}\ar[r]^{\wtil{\beta_2}} 
		        & \Ext^1_A(R,S_0^{m_0}) \ar[r]\ar[d]^{\wtil{\gamma_2}} 
		            & \Ext^1_A(S_0^{m_0},S_0^{m_0}) = 0 \ar[d]\\
		 \Hom_A(S_0^{m_0},R) \ar[r]^{\wtil{\theta_1}} \ar[d]
		    & \Ext^1_A(S_1^{m_1}\oplus S_2^{m_2}, R) \ar[r]^{\wtil{\theta_2}} \ar[d] 
		        & \Ext^1_A(R,R) \ar[r]\ar[d] 
		            & \Ext^1_A(S_0^{m_0},R) = 0  \\
		\Hom_A(S_0^{m_0},S_1^{m_1}\oplus S_2^{m_2}) = 0 
		    &	\Ext^1_A(S_1^{m_1}\oplus S_2^{m_2}, S_1^{m_1}\oplus S_2^{m_2})=0
		        &	\Ext^1_A(R, S_1^{m_1}\oplus S_2^{m_2}) = 0 
		            &
		}
		}%
	$$
	
	As before, we have a chain complex:
	 $$
	 	\xymatrix@C=3em{
			\End_A(S_1^{m_1}\oplus S_2^{m_2}) \bigoplus \End_A(S_0^{m_0}) \ar[r]^-{\wtil{\eta} = \binom{\wtil{\alpha_1}}{\wtil{\beta_1}}}  
			    & \Ext^1_A(S_1^{m_1}\oplus S_2^{m_2},S_0^{m_0})\ar[r]^-{\wtil{f} = \wtil{\theta}\circ\wtil{\alpha_2}} 
			        & \Ext^1_A(R,R)=0,
				}
	 $$
	but now it is exact at the middle term by diagram chasing: for any $q \in ker(\wtil{f})$, there exists $s\in \Hom_A(S_0^{m_0},R)$ such that $\wtil{\theta_1}(s) = \wtil{\alpha_2}(q)$. Since $\End_A(S_0^{m_0}) \cong \Hom_A(S_0^{m_0},R)$, we denote $p_1$ the corresponding element of $s$ in $\End_A(S_0^{m_0})$. Now consider $q' = q- \wtil{\beta_1}(p_1)$, then we have $\wtil{\alpha_2} (q') = 0$. By the exactness we have $p_2\in \End_A(S_1^{m_1}\oplus S_2^{m_2})$ such that $\wtil{\alpha_1}(p_2)=q'$. So we proved that exists an element $(p_1, p_2) \in	\End_A(S_1^{m_1}\oplus S_2^{m_2}) \bigoplus \End_A(S_0^{m_0})$ such that $\wtil{\eta}(p_1,p_2) = q$.

	Thus $\wtil{\eta}$ is surjective since $\Ext^1_A(R,R) = 0$ by assumption. Comparing the above exact sequence with the chain complex (\ref{ses1}), we have a commutative diagram:
	$$
		\xymatrix{
			\End_A(S_1^{m_1}\oplus S_2^{m_2}) \bigoplus \End_A(S_0^{m_0}) \ar[r]^-{\wtil{\eta}} \ar[d]^{\CF_Q}  
			    & \Ext^1_A(S_1^{m_1}\oplus S_2^{m_2},S_0^{m_0})\ar[d]^{\cong} \\
			\End_X(T_1^{m_1}\oplus T_2^{m_2}) \bigoplus \End_X(T_0^{m_0}) \ar[r]^-{\eta} 
			    & \Ext^1_X(T_1^{m_1}\oplus T_2^{m_2},T_0^{m_0})   
			}
	$$	 
	 Now $\eta$ is surjective since $\wtil{\eta}$ is surjective. Thus by the above arguments,  we have shown that $\Ext^1_X(\wtil{T},\wtil{T}) = 0$.
	 
	 The higher cohomology $\Ext^{>1}_X(\wtil{T},\wtil{T})$ vanishes since $\Ext^{>1}_X(T,T) = 0$.
\end{proof}

\section{Simple tilts and Ext-quivers}
\subsection{t-structures in $D^b(X)$}\label{cal_tilt}
 In the final section we calculated some simple tilts from a finite length heart in $D^b_0(X)$, where we denote by  $D^b_0(X)$ the full subcategory in $D^b(X)$ consisting of objects whose cohomology sheaves are set-theoritically supported on the zero section $\CP^2\subset \Tot \Omega_{\CP^2}$. Similarly, for some noetherian graded algebra $W$ we denote by $D^b_0(\text{mod-}W)$ the full subcategory consisting of objects whose cohomology modules are nilpotent. The derived equivalence (\ref{d_morita}) induced by the tilting object will restrict to the following case:
\begin{lemma}
Let $T$ be a tilting object in $D^b(X)$ and $B= \End(T)$, then the equivalence 
$$
    \RHom(T,-): D^b(X) \longrightarrow D^b(\text{mod-} B)
$$
will restrict to give an equivalence of full subcategories 
$$
    \RHom(T,-): D^b_0(X) \longrightarrow D^b_0(\text{mod-} B).
$$
\end{lemma}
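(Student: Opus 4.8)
The plan is to deduce the restriction from the fact that $\Phi:=\RHom(T,-)$ is linear over the ring of global functions $Z:=\Gamma(X,\CO_X)=\bigoplus_{n\ge0}\HO^0(\CP^2,\sym^n\TT)$, where $\pi_*\CO_X\cong\sym^\bullet\TT$. Indeed, $Z$ acts $\CO_X$-linearly on $D^b(X)$, so each $z\in Z$ is a natural endomorphism of $\mathrm{id}_{D^b(X)}$; in particular $z\mapsto z\cdot\mathrm{id}_T$ is a ring homomorphism $Z\to Z(B)$ into the centre of $B=\End_X T$, and for every $F$ one has $\Phi(z\cdot\mathrm{id}_F)=z\cdot\mathrm{id}_{\Phi(F)}$, the right-hand side being the central action of $z$ on the $B$-module $\Phi(F)$. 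Write $\mathfrak{m}=Z_{>0}$ for the irrelevant ideal.

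Next I would check that $\mathfrak{m}$ cuts out the zero section. Every positive-degree global function is a section of some $\sym^n\TT$ with $n\ge1$, i.e. a fibrewise polynomial of positive degree, so it vanishes along the zero section; thus $\mathfrak{m}$ is contained in the ideal of $\CP^2\subset X$. Conversely the degree-one piece $\HO^0(\CP^2,\TT)=\mathfrak{sl}_3$ already separates points off the zero section, because $PGL_3$ acts transitively on $\CP^2$, so the evaluation $\HO^0(\CP^2,\TT)\to T_{[\ell]}\CP^2$ is surjective at every point and some global vector field pairs nontrivially with any nonzero cotangent vector. Hence $V(\mathfrak{m}\,\CO_X)$ is the zero section set-theoretically, and a coherent sheaf $G$ on $X$ lies in $\Coh_0(X)$ exactly when $\mathfrak{m}$ acts locally nilpotently on $G$.

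The key step is a t-structure-free description of both subcategories. I would prove that $F\in D^b_0(X)$ if and only if every $z\in\mathfrak{m}$ acts nilpotently as an endomorphism of $F$, that is $z^N\cdot\mathrm{id}_F=0$ in $\Hom_{D^b(X)}(F,F)$ for $N\gg0$. For the forward direction one uses the standard amplitude bound: an endomorphism of a bounded complex inducing $0$ on every cohomology object is nilpotent of order at most its cohomological amplitude plus one (via the truncation triangles), applied to a power of $z$ that annihilates all cohomology sheaves by the previous paragraph; the converse is read off on cohomology. The same characterisation holds on the other side: since the affinisation $X\to\Spec Z$ is projective, $B$ is module-finite over its central subalgebra $Z$, so nilpotence of the $B_{>0}$-action on the cohomology modules is equivalent to nilpotence of the $\mathfrak{m}$-action, and hence to $z^N\cdot\mathrm{id}_M=0$ for all $z\in\mathfrak{m}$. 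Thus $M\in D^b_0(\text{mod-}B)$ is detected by the identical condition.

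Finally, the condition ``$z^N\cdot\mathrm{id}=0$ for all $z\in\mathfrak{m}$'' is preserved verbatim by the equivalence $\Phi$, because it is fully faithful and $\Phi(z\cdot\mathrm{id}_F)=z\cdot\mathrm{id}_{\Phi F}$. Therefore $F\in D^b_0(X)$ if and only if $\Phi(F)\in D^b_0(\text{mod-}B)$, and $\Phi$ restricts to the desired equivalence of full subcategories. The main obstacle is exactly this middle reformulation: under $\Phi$ the two sides carry \emph{different} t-structures, so ``supported on the zero section'' and ``nilpotent cohomology'' cannot be compared directly, and the point is to replace both by the manifestly invariant nilpotent-endomorphism condition. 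Getting the amplitude lemma and the passage between $\mathfrak{m}$-nilpotence and $B_{>0}$-nilpotence (via module-finiteness over $Z$) right is where the genuine content lies.
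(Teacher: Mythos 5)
Your proof is correct, but it takes a genuinely different route from the paper's. The paper argues in one direction only: for $F\in D^b_0(X)$ it runs the spectral sequence $\Ext^q_X(T,H^p(F))\Rightarrow\Ext^{p+q}_X(T,F)$, observes that each term is finite-dimensional because $H^p(F)$ is supported on the proper subvariety $\CP^2$, and concludes ``finite dimensional, thus nilpotent.'' Your argument instead reformulates both subcategories by the single $\Phi$-invariant condition that the irrelevant ideal $\mathfrak{m}\subset Z=\Gamma(X,\CO_X)$ acts nilpotently on endomorphisms, using the amplitude bound and the module-finiteness of $B$ over $Z$. This buys you two things the paper's proof does not deliver. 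First, it is symmetric: it shows $F\in D^b_0(X)$ \emph{iff} $\Phi(F)\in D^b_0(\text{mod-}B)$, whereas the paper only checks that the functor lands in $D^b_0(\text{mod-}B)$ and never verifies essential surjectivity of the restriction. Second, it repairs a genuine weak point: since $X\to \mathrm{Spec}\,Z$ contracts only the zero section, a skyscraper at a point of $X$ off the zero section already has finite-dimensional $\Ext^\bullet_X(T,-)$ without being $\mathfrak{m}$-nilpotent, so ``finite dimensional, thus nilpotent'' is not a valid inference by itself; one must use, as you do, that the support lies on the zero section, i.e.\ that $\mathfrak{m}$ (not merely some maximal ideal) kills the cohomology. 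The only places where you should spell out details are the identification of $V(\mathfrak{m}\CO_X)$ with the zero section via global generation of $\TT$, and the comparison of $B_{>0}$-nilpotence with $\mathfrak{m}$-nilpotence, which needs $B_{\geq d+1}\subseteq\mathfrak{m}B$ for $d$ bounding the degrees of module generators of $B$ over $Z$; both go through as you indicate.
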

\begin{proof}
    We only need to show that given an object $F\in D_0^b(X)$, $\BR \Hom(T,-)$ sends to $D^b_0(\text{mod-} B)$. We consider the spectral sequence
    $$
        E^{p,q}_{2} = \Ext^q_{X}\left(T,H^p(F)\right)\ \Rightarrow \ \Ext^{p+q}_{X}(T,F).
    $$
    Each $E^{p,q}_2$ is finite dimensional since $H^p(F)$ is supported on $\CP^2$, thus the spectral sequence converges to terms which are all finite dimensional, thus nilpotent over $B$.
\end{proof}
The standard t-structure on $D^b(\text{mod-} B)$ induces one on $D^b_0(\text{mod-}B)$ in the obvious way, and we pull this back using the equivalence
$$
    \RHom_X(T,-): D^b_0(X) \longrightarrow D^b_0(\text{mod-} B)
$$
of the above lemma gives a bounded t-structure on $D^b_0(X)$. If $T = \bigoplus \pi^* E_i$ for some exceptional collection on $\CP^2$ in Theorem \ref{tilt_obj},  we denote the induced heart by $\CB(E_0,E_1,E_2)$. Then $\CB(E_0,E_1,E_2)$ is a finite length abelian category with $3$ simple objects, which we can write down explicitly: denote the dual exceptional collection of $\BE$ by $\BF = (F_0,F_1,F_2)$ as in Definition-Lemma \ref{dual_coll}. Then the simple objects in $\CB(E_0,E_1,E_2)$ will be 
\begin{equation}\label{simple_exc}
S_0 :=s_*F_0, \quad S_1:=s_*F_1, \quad S_2:=s_* F_2.     
\end{equation}
where we denote the inclusion map $s: \CP^2\rightarrow X$.

 Here we introduce a conception which encode the information of simple tilts:
\begin{definition}[Ext-quivers]
Let $\HH$ be a  t-structure of finite length in triangulated category $D$, then the Ext-quiver of $\HH$ is the graded quiver $Q(\HH)$ whose vertexes are labeled by simple objects $S_i$, and whose graded $k$ arrows $S_i\rightarrow S_j$ corresponds to a basis  of $\Hom_D(S_j, S_i[k])$.
\end{definition}
We denote the heart of t-structure by $\CA$ which is obtained from the tilting object $T = \pi^* \CO(-2)\oplus \pi^* \CO(-1)\oplus \pi^* \CO $, that is $\CA =\CB(\CO(-2),\CO(-1),\CO)$. 

For $\CA$ we have
\begin{enumerate}
    \item the simple objects which are $S_0 = s_* \CO(-2), \ S_1 = s_* \Omega(-1)[1], \ S_2= s_* \CO(-3)[2]$;
    \item the dual objects in $D^b(X)$ which are $P_0 = \pi^*\CO(-2),\ P_1 = \pi^*\CO(-1),\ P_2 = \pi^*\CO$ such that $\Hom^{\bullet}_{D^b(X)} ( P_i, S_j) = \delta_{ij} \BC$;
    \item the arrows in the Ext-quiver which will be: 
        \begin{eqnarray*}
            \Ext^i_X(S_0,S_1) &=& \Ext^i_X\left(s_*\CO(-2), s_*\Omega(-1)[1]\right) \\
                            &=& \Ext^{i+1}_{\CP^2}\left(s^*s_*\CO(-2),\Omega(-1)\right) \\ 
                            &=& \bigoplus_{k=0}^2\Ext^{i+1-k}_{\CP^2}\left(\CO(-2)\otimes \wedge^k\TT, \Omega(-1)\right) \\
                            &=& \bigoplus_{k=0}^2\HO^{i+1-k}\left(\CP^2, \Omega(1)\otimes \wedge^k\Omega\right),
        \end{eqnarray*}
        in the third equation we use the formula obtained from the Koszul resolution $s^*s_* \CO_{\CP^2} = \bigoplus \wedge^k \TT[k] $ in $D^b(\CP^2)$ \cite[Proposition 11.1]{Huy06}. We firstly tensor the vector bundle $\Omega(1)$ by the exterior powers of the Euler exact sequence:
        $$
            \xymatrix{
                0 \ar[r] & \wedge^k \Omega \ar[r] & \wedge^k V(-1) \ar[r] & \wedge^{k-1} \Omega \ar[r] & 0
            }
        $$
        Then we take the long exact sequence of cohomology groups, and do the inductions on $k$ of $\wedge^k \Omega$. Finally we get
        $$
            \HO^p(\CP^2, \Omega(1)\otimes \wedge^q \Omega) = \left\{ \begin{array}{cc}
                                                                        \BC& p=1 \text{ and } q=1,\ 2\\
                                                                        \BC^3 & p=2 \text{ and } q = 2 \\
                                                                        0 & \text{ otherwise.}
                                                                        \end{array}\right.
        $$
        
        Thus we have
        $$
            \Ext^i_X(S_0,S_1) = \left\{\begin{array}{cc}
                                        \BC^3 & i = 1,\ 3\\
                                        0 & \text{otherwise}
                                        \end{array}
                                \right.
        $$
        Similarly we have
        \begin{eqnarray*}
            \Ext^i_X(S_1,S_2) &=& \left\{\begin{array}{cc}
                                        \BC^3 & i = 1,\ 3\\
                                        0 & \text{otherwise}
                                        \end{array}
                                \right. \\
            \Ext^i_X(S_0,S_2) &=& \left\{\begin{array}{cc}
                                        \BC^3 & i = 2\\
                                        0 & \text{otherwise}
                                        \end{array}
                                \right. \\   
           (m=0,\ 2)\  \Ext^i_X(S_m,S_m) &=& \left\{\begin{array}{cc}
                                        \BC & i = 0,\ 2,\ 4\\
                                        0 & \text{otherwise}
                                        \end{array}
                                \right. \\ 
                \Ext^i_X(S_1,S_1) &=& \left\{\begin{array}{cc}
                                        \BC & i=0,\ 4\\
                                        \BC^{10} & i = 2\\
                                        0 & \text{otherwise}
                                        \end{array}
                                \right.
        \end{eqnarray*}
        the other arrows could be obtained from the Serre duality\\ $\Hom_D(S_i, S_j[k]) \cong \Hom_D(S_j,S_i[4-k])^*$.
    \item We draw the Ext-quiver of $\CA$ as the following: the black arrows are of degree $1$, the red arrows are of degree $2$:
        \begin{equation}\label{Ext-A}
            \begin{tikzcd}
                S_0 \arrow[out=120,in=180,loop,red,"1"]   \arrow[r,bend left, "3"] &  S_1 \arrow[out=60,in=120,loop,red,"10"] \arrow[l,bend left, "3"] \arrow[r,bend left,"3"] & S_2 \arrow[out=0,in=60,loop,red,"1"] \arrow[l,bend left, "3"]
                \arrow[curve={height=-60pt},red,"3", from=1-1, to=1-3] \arrow[curve={height=-60pt},red,"3", from=1-3, to=1-1] 
           \end{tikzcd}
        \end{equation}
         we omit the arrows of other degree since they can be read from the Serre duality.   
\end{enumerate}
\subsection{Mukai flops}
According to the Ext-quiver (\ref{Ext-A}) of $\CA$,  we  see that there is a symmetry between $S_0$ and $S_2$. In fact, we can write down explicitly the auto-equivalence of $D^b(X)$ which interchanges  $S_0$ and $S_2$. This is quite useful for simplifying the calculations of the simple tilts. 
\begin{definition}[Mukai flop]
Let $g: Z\rightarrow X$ a blow-up along the zero section of the projection $\pi: X\rightarrow \CP^2$. The exceptional locus $E(\subset Z)$ of $g$ is the incidence variety in $\CP^2\times (\CP^2)^{\vee}$ where $(\CP^2)^{\vee}$ is the dual projective space. By blowing down $Z$ along the projection $E\rightarrow (\CP^2)^{\vee}$, we obtain a birational map $g^+:Z\rightarrow X^+ = \Tot \Omega_{(\CP^2)^{\vee}}$. The resulting birational map
$$
   \xymatrix{ \phi = g^+\circ g^{-1}:X \ar@{..>}[r] & X^+} 
$$
is called a Mukai flop. 
\end{definition}
The following result were proved in \cite[Theorem 3.1]{Nam03} \cite[Corollary 5.7]{Kaw02}:
\begin{theorem}[Derived equivalence for Mukai flop]\label{Mukai}
Let $\xymatrix{ \phi:X \ar@{..>}[r] & X^+}$ denote the Mukai flop between $X$ and $X^+$, then we have a derived equivalence $\Phi: D^b(X) \rightarrow D^b(X^+)$. Moreover, the derived equivalence can be restricted to $D^b_0(X) \rightarrow D^b_0(X^+)$.
\end{theorem}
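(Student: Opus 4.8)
The plan is to realise $\Phi$ as a Fourier--Mukai transform along the common resolution $Z$ and to verify that its kernel satisfies the orthogonality conditions on a spanning class of point objects. Recall from the construction of the flop that $g : Z \to X$ and $g^+ : Z \to X^+$ exhibit $Z$ as the blow-up of either side along the respective zero section, with common exceptional divisor $E \subset \CP^2 \times (\CP^2)^{\vee}$ the incidence variety. Both $X$ and $X^+$ are resolutions of the same affine singularity $X_0 = \operatorname{Spec}\HO^0(X,\CO_X)$, the closure of the minimal nilpotent orbit in $\mathfrak{sl}_3$, and $Z$ furnishes a correspondence between the two sitting over $X_0$. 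I would take $\Phi := \BR g^+_*\,\BL g^*(-)$, i.e.\ the Fourier--Mukai functor $D^b(X) \to D^b(X^+)$ whose kernel is $\CO_Z$, regarded as a sheaf on $X \times X^+$ via the closed embedding $(g,g^+): Z \hookrightarrow X \times X^+$.

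First I would establish full faithfulness. Away from the two zero sections the flop is an isomorphism, so $\Phi$ restricts to the induced identification there, and the orthogonality $\Hom^\bullet(\Phi\,\CO_x, \Phi\,\CO_y) \cong \Hom^\bullet(\CO_x,\CO_y)$ is immediate for points $x,y$ lying off the zero section. The substantive computation is over the exceptional loci: for a point $x \in \CP^2 \subset X$ one must identify $\Phi\,\CO_x$ explicitly in terms of the geometry of the fibre $g^{-1}(x) \subset E$ and its image under $g^+$, and then check the point-object criterion in the form adapted to quasi-projective varieties (via compactly supported spanning classes and the proper map to $X_0$). Because $X$ is holomorphically symplectic the canonical bundles of $X$ and $X^+$ are trivial, so once $\Phi$ is fully faithful its image is a full triangulated subcategory of $D^b(X^+)$ stable under the (shifted) Serre functor; as $D^b(X^+)$ is indecomposable this forces $\Phi$ to be essentially surjective, hence an equivalence. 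This is exactly the content of \cite{Nam03,Kaw02}, and I would cite their verification of the point-object orthogonality rather than reproduce it.

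The hard part is precisely this orthogonality computation over the zero section, where the fibres of $g$ and $g^+$ are positive-dimensional and the derived pushforwards $\BR g^+_*$ of the relevant twists of $\CO_Z$ must be controlled; the non-compactness of $X$ and $X^+$ is the secondary complication, since one cannot invoke the Bondal--Orlov/Bridgeland criterion verbatim and must instead work with objects proper over $X_0$.

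For the final \emph{moreover} assertion, I would argue that $\Phi$ preserves the support condition. If $F \in D^b_0(X)$ is set-theoretically supported on the zero section $\CP^2$, then $\BL g^* F$ is supported on $E = g^{-1}(\CP^2)$, and since $g^+(E) = (\CP^2)^{\vee}$ is the zero section of $X^+$, the object $\Phi(F) = \BR g^+_*\,\BL g^* F$ is supported on $(\CP^2)^{\vee}$; thus $\Phi(D^b_0(X)) \subseteq D^b_0(X^+)$. Running the same argument for the inverse transform (again with kernel $\CO_Z$, now from $X^+$ to $X$) gives the reverse inclusion, so $\Phi$ restricts to an equivalence $D^b_0(X) \to D^b_0(X^+)$.
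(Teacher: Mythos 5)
Your choice of kernel is the critical error. You define $\Phi := \BR g^+_*\,\BL g^*(-)$, the Fourier--Mukai functor with kernel $\CO_Z$ supported on the common blow-up $Z$. For the Mukai flop this functor is \emph{not} an equivalence: this is precisely the main negative result of \cite{Nam03} (and the reason the Mukai flop is a celebrated test case), namely that for $\Tot\Omega_{\CP^n}$ with $n\geq 2$ the pull--push through the common resolution fails to be fully faithful, in contrast with the three-fold Atiyah flop where the Bondal--Orlov argument applies. The equivalence that \cite{Nam03} and \cite{Kaw02} actually construct uses the kernel $\CO_{X\times_{X_0}X^+}$, the structure sheaf of the fibre product over the affinization $X_0$; this fibre product is reducible, containing $Z$ as one irreducible component and the product $\CP^2\times(\CP^2)^{\vee}$ of the two zero sections as another, and the extra component is essential. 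So the sentence ``this is exactly the content of \cite{Nam03,Kaw02}, and I would cite their verification of the point-object orthogonality'' cites those papers for a statement they in fact refute: the orthogonality $\Hom^{\bullet}(\Phi\,\CO_x,\Phi\,\CO_y)\cong\Hom^{\bullet}(\CO_x,\CO_y)$ genuinely breaks down for points on the zero section, exactly at the place you flag as ``the hard part'' and defer.

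The paper sidesteps the kernel question entirely (following Toda--Uehara): since $\phi$ is an isomorphism in codimension one, the tilting bundle $\varepsilon=\bigoplus_{i=0}^{2}\CO_X(-i)$ corresponds to a reflexive sheaf $\varepsilon'\cong\bigoplus_{i=0}^{2}\CO_{X^+}(i)$ on $X^+$ with isomorphic endomorphism algebra $A$, and the equivalence is the composite of the two tilting equivalences $D^b(X^+)\simeq D^b(A)\simeq D^b(X)$; the restriction to nilpotent supports is then immediate on the module side. If you want to repair your argument, either switch to the kernel $\CO_{X\times_{X_0}X^+}$ (your ``moreover'' step about supports then goes through essentially verbatim, since that kernel is still supported over the zero fibre of $X_0$), or adopt the tilting-bundle route.
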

\begin{proof}
Here we adopt the proof from \cite[Example 5.3]{TodUeh}: put $\CO_X(1) := \pi^*\CO(1)$. Then we have a tilting object $\varepsilon = \oplus_{i=0}^2 \CO_X(-i)$ by Theorem \ref{tilt_obj}. Since $\phi$ is an isomorphism in codimension one, there is an equivalence between the categories of reflexive sheaves on $X$ and $X^+$. Denote $\varepsilon'$ the sheaf corresponding to $\varepsilon$ on $X^+$. By results in \cite[Lemma 1.3]{Nam03}, we have 
$$
    \varepsilon' \cong \bigoplus_{i=0}^2\CO_{X^+}(i).
$$
Since we have the isomorphism of rings
$$
    \xymatrix{
        \phi_*:\End_{X}(\varepsilon) = A  \ar[r]^-{\cong} & A' = \End_{X^+}(\varepsilon')
    }
$$
so we have the equivalence of derived categoires:
$$
    \xymatrix{
        \Phi: D^b(X') \ar[r] & D^b(A') \ar[r] & D^b(A) \ar[r] & D^b(X).
    }
$$
By restricting to full subcategories, we have an equivalence:
$$
    \xymatrix{
        \uline{\Phi}: D^b_0(X') \ar[r] & D^b_0(A') \ar[r] & D^b_0(A) \ar[r] & D^b_0(X).
    }
$$
\end{proof}
Then under the canonical isomorphism between $\CP^2$ and $(\CP^2)^{\vee}$, we can view the $\Phi$ as an autoequivalence of $D^b(X)$ (resp. $D^b_0(X)$).
\begin{proposition}\label{swap}
Let $\BE = \left(\CO(-2),\CO(-1), \CO\right)$,  $\CA$ be the associated bounded t-structure on $D^b_0(X)$, and $S_i$ be the simple objects in $\CA$. Let $\Psi := \left(-\otimes^{\BL} \CO_X(-2)\right)\circ \Phi$, then we have $ \Psi(\CA) = \CA$, and $\Psi(S_j) = S_{2-j}$ for $j=0,\ 1,\ 2$.
\end{proposition}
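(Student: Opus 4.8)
The plan is to reduce the whole statement to the behaviour of $\Psi$ on the biorthogonal collection of projective generators $P_0 = \pi^*\CO(-2)$, $P_1 = \pi^*\CO(-1)$, $P_2 = \pi^*\CO$, which satisfy $\Hom^{\bullet}_{D^b(X)}(P_i,S_j)=\delta_{ij}\BC$. The key reduction is that both assertions follow once I show $\Psi(P_i)\cong P_{2-i}$: this will immediately give $\Psi(T)\cong T$ for the tilting object $T=\bigoplus_i P_i$ (hence $\Psi(\CA)=\CA$), and then the biorthogonality rigidly determines the permutation of the simples. So the substance of the proof is a single line-bundle computation; everything around it is formal.

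First I would pin down the action of $\Phi$ on line bundles. From the proof of Theorem \ref{Mukai}, $\Phi$ is assembled from the strict-transform equivalence of reflexive sheaves attached to the flop $\phi$, which is an isomorphism in codimension one, together with the identification $\phi_*:\End_X(\varepsilon)\xrightarrow{\cong}\End_{X^+}(\varepsilon')$; on the summands this is exactly Namikawa's matching $\CO_X(-i)\leftrightarrow\CO_{X^+}(i)$, so that $\Phi(\CO_{X^+}(i))\cong\CO_X(-i)$ for $i=0,1,2$. Because $\Phi$ is induced by the strict transform (equivalently, its Fourier--Mukai kernel is supported on the fibre product), it is equivariant for the twist functors with a sign, $\Phi\bigl(F\otimes\CO_{X^+}(k)\bigr)\cong\Phi(F)\otimes\CO_X(-k)$; after the identification $X\cong X^+$ coming from $\CP^2\cong(\CP^2)^{\vee}$ this reads $\Phi(\CO_X(k))\cong\CO_X(-k)$ for all $k\in\BZ$. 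I expect this twist-equivariance (in particular, getting the negative twists $\CO_X(-1),\CO_X(-2)$ rather than only the summands already visible in the construction) to be the main obstacle; the rest is bookkeeping.

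Granting Step 1, the projective computation is immediate: writing $P_i=\CO_X(i-2)$ and $\Psi=(-\otimes^{\BL}\CO_X(-2))\circ\Phi$, I get $\Psi(P_i)=\Phi(\CO_X(i-2))\otimes\CO_X(-2)\cong\CO_X(2-i)\otimes\CO_X(-2)\cong\CO_X(-i)=P_{2-i}$. As $i$ runs over $\{0,1,2\}$ so does $2-i$, so $\Psi$ permutes the summands of $T=\bigoplus_i P_i$ and $\Psi(T)\cong T$. Now $\Psi$ is an autoequivalence of $D^b_0(X)$ (by Theorem \ref{Mukai} for $\Phi$, and because tensoring by a line bundle preserves the support condition), and the heart is $\CA=\{F\in D^b_0(X):\Hom_{D^b(X)}(T,F[k])=0\ \text{for}\ k\neq 0\}$. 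From $\Psi(T)\cong T$ and full faithfulness, $\Hom(T,\Psi F[k])\cong\Hom(\Psi T,\Psi F[k])\cong\Hom(T,F[k])$ for every $k$, so $F\in\CA\iff\Psi F\in\CA$; since $\Psi$ is essentially surjective this yields $\Psi(\CA)=\CA$.

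Finally I would identify the permutation on simples. Restricted to the finite-length abelian category $\CA$, the autoequivalence $\Psi$ is exact, hence sends simples to simples, so $\Psi(S_j)\cong S_{\sigma(j)}$ for some permutation $\sigma$ of $\{0,1,2\}$. Applying $\Psi$ to the biorthogonality and invoking $\Psi(P_i)\cong P_{2-i}$ gives
$$
\delta_{ij}\,\BC=\Hom^{\bullet}_{D^b(X)}(P_i,S_j)\cong\Hom^{\bullet}_{D^b(X)}(\Psi P_i,\Psi S_j)\cong\Hom^{\bullet}_{D^b(X)}(P_{2-i},S_{\sigma(j)})=\delta_{2-i,\sigma(j)}\,\BC .
$$
For fixed $j$ the left side is nonzero only at $i=j$ and the right side only at $i=2-\sigma(j)$, forcing $\sigma(j)=2-j$. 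Hence $\Psi(S_j)=S_{2-j}$ for $j=0,1,2$, which together with $\Psi(\CA)=\CA$ completes the proof.
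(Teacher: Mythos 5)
Your proposal is correct and follows essentially the same route as the paper: both deduce $\Psi(P_i)\cong P_{2-i}$ from the action of $\Phi$ on the line bundles $\pi^*\CO(-2),\pi^*\CO(-1),\pi^*\CO$ given by Theorem \ref{Mukai}, conclude $\Psi(\CA)=\CA$ from invariance of the tilting object, and then read off the permutation of simples from the biorthogonality $\Hom^{\bullet}(P_i,S_j)=\delta_{ij}\BC$. The twist-equivariance you flag as the main obstacle is not actually needed in general: only $\Phi(\CO_X(j))\cong\CO_X(-j)$ for $j\in\{-2,-1,0\}$ is used, and that is exactly what the construction of $\Phi$ via the matched tilting bundles already provides.
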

\begin{proof}
Since by Theorem \ref{Mukai}, $\pi^*\CO(i)$ ($-2\leq i\leq 0$) is sent to $\pi^*\CO(-i)$ via $\Phi$, then it follows that $\Psi\left(\pi^*\CO(i)\right) = \pi^*\CO(-i-2)$ for $-2\leq i \leq 0$. Thus we have $\Psi\left(\bigoplus_{i=-2}^0 \pi^* \CO(i)\right) = \bigoplus_{i=-2}^0 \pi^* \CO(i)$, and we get that $\Psi(\CA) \cong \CA$. 

If we denote by $P_0 = \pi^*\CO(-2)$, $P_1 =\pi^*\CO(-1)$ and $P_2 = \pi^*\CO$, then the $\Psi(S_j)$ is uniquely determined by the dual relation with $\Psi\left(P_i\right)$, i.e 
$$
    \Hom_{D^b(X)}^{\bullet}\left(\Psi(P_i), \Psi(S_j)\right) =             \left\{\begin{array}{cc}
                \BC & -i=j\\
                0 & \text{otherwise}
                \end{array}
    \right.
$$
Since $\Psi\left(P_i\right) = P_{2-i}$, thus we have $\Phi(S_j) = S_{2-j}$ for $j = 0,\ 1,\ 2$. 
\end{proof}

We proceed to calculate the simple tilts at $S_i$ of $\CA$, and their Ext-quivers:
\subsection{Simple tilt at $S_1$}
        Denote by $\mathcal{C} := L_{S_1} \CA$, we list the following information:
        \begin{enumerate}
            \item by Proposition \ref{sim_tilt} the simple objects  are $U_1  = S_1[-1] = s_*\Omega(-1), \ U_0,\ U_2 $ where $U_0$ and $U_2$  fit into the following short exact sequences:
             \begin{eqnarray}
                     0 \rightarrow s_*\CO(-2) \rightarrow U_0  \rightarrow s_*\Omega(-1)[1]^3 = S_1\otimes \Ext^1(S_1,S_0) \rightarrow 0
                  \label{univ_seq1}\\
                 0 \rightarrow s_*\CO(-3)[2] \rightarrow U_2 \rightarrow s_*\Omega(-1)[1]^3 = S_1\otimes \Ext^1(S_1,S_2) \rightarrow 0
                \label{univ_seq2} 
             \end{eqnarray}
             note that by Proposition \ref{swap} we have $\Psi(U_0) = U_2$ and $\Psi(U_2) = U_0$.
             
            We need the following lemma for our calculations:
            \begin{lemma}
                 $U_0 = s_* P[1]$ where $P\in \Coh \CP^2$ fits into the following short exact sequence:
                    \begin{equation} \label{PP}
                        \xymatrix{
                        0\ar[r] & P \ar[r] & \Omega(-1)\otimes \Hom_{\CP^2}\left(\Omega(-1), \CO(-2) \right) \ar[r] & \CO(-2) \ar[r]& 0
                        }
                    \end{equation}
                    that is, $P = L_{\Omega(-1)} \CO(-2)$ (the left mutation).
             \end{lemma}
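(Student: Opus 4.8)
The plan is to recognize the abstractly defined simple $U_0$ as the (shifted) pushforward of an honest sheaf on $\CP^2$, by matching the triangle that defines $\phi_{S_1}(S_0)$ in Proposition \ref{sim_tilt} against the mutation triangle of $\CO(-2)$ through $\Omega(-1)$ downstairs, and then transporting everything along the exact, fully faithful functor $s_*$.

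First I would unwind the recipe of Proposition \ref{sim_tilt}. Since $S_1[-1]=s_*\Omega(-1)$ and $S_0=s_*\CO(-2)$ are both honest sheaves, $U_0=\phi_{S_1}(S_0)$ is the cone of the evaluation map $s_*\Omega(-1)\otimes\Ext^1_X(S_1,S_0)\to s_*\CO(-2)$, which after rotation is precisely the triangle underlying (\ref{univ_seq1}). The first computation is to pin down the coefficient space. Using the adjunction together with the Koszul resolution $s^*s_*\CO_{\CP^2}\cong\bigoplus_k\wedge^k\TT[k]$ (exactly as in the Ext-quiver calculation for $\CA$), only the $k=0$ summand survives in cohomological degree $0$, so
$$
\Ext^1_X(S_1,S_0)=\Hom^0_X\bigl(s_*\Omega(-1),s_*\CO(-2)\bigr)=\Hom_{\CP^2}\bigl(\Omega(-1),\CO(-2)\bigr)=\HO^0\bigl(\CP^2,\TT(-1)\bigr)\cong\BC^3 .
$$
Because $s_*$ is fully faithful on coherent sheaves and the evaluation map is functorial, the degree-$0$ morphism appearing in the cone is exactly $s_*$ of the $\CP^2$-evaluation $\mathrm{ev}\colon\Omega(-1)\otimes\Hom_{\CP^2}(\Omega(-1),\CO(-2))\to\CO(-2)$.

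Next I would verify that $\mathrm{ev}$ is a surjection of sheaves, which is the one genuinely geometric input. Identifying $\mathcal{H}om(\Omega(-1),\CO(-2))\cong\TT(-1)$, the map $\mathrm{ev}$ factors as
$$
\Omega(-1)\otimes\HO^0\bigl(\TT(-1)\bigr)\longrightarrow\Omega(-1)\otimes\TT(-1)\longrightarrow\CO(-2),
$$
where the first arrow is induced by the global-generation map of $\TT(-1)$ — surjective, since $\TT(-1)$ is realized as a quotient of a trivial bundle by the twisted Euler sequence and $\HO^0$ is an isomorphism onto the generating sections — and the second is the canonical contraction, which is surjective because $\Omega(-1)$ is locally free. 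Hence $\mathrm{ev}$ is surjective, its kernel $P$ is a (locally free) sheaf, and $P$ sits in the short exact sequence (\ref{PP}). That $P=L_{\Omega(-1)}\CO(-2)$ then follows because $\Hom^\bullet_{\CP^2}(\Omega(-1),\CO(-2))$ is concentrated in degree $0$, so the defining triangle of the left mutation is exactly this short exact sequence; one also records $\Hom^\bullet_{\CP^2}(\CO(-2),\Omega(-1))=\HO^\bullet(\Omega(1))=0$, which is what makes the left mutation well defined.

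Finally, since $s_*$ is exact, applying it to (\ref{PP}) produces the short exact sequence $0\to s_*P\to s_*\Omega(-1)\otimes\BC^3\to s_*\CO(-2)\to 0$, whose associated triangle identifies $\cone(s_*\mathrm{ev})$ with $s_*P[1]$. Comparing with the rotated defining triangle of $U_0$ yields $U_0=s_*P[1]$, as claimed. I expect the middle step to be the main obstacle: one must simultaneously confirm that the relevant derived $\Hom$ on $X$ collapses onto a single degree-$0$ space on $\CP^2$ (so the cone is computed purely by sheaf data) and that the evaluation is genuinely surjective; once both are in hand, the exactness of $s_*$ makes the remaining identification formal.
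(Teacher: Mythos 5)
Your proposal is correct and follows essentially the same route as the paper: identify $\Ext^1_X(S_1,S_0)$ with $\Hom_{\CP^2}(\Omega(-1),\CO(-2))$ via the closed immersion, push the mutation sequence forward along the exact functor $s_*$, and conclude by uniqueness of the cone. You additionally verify the surjectivity of the evaluation map (so that $P$ really is a sheaf) and the vanishing $\Hom^\bullet(\CO(-2),\Omega(-1))=0$ making the left mutation well defined, both of which the paper leaves implicit.
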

             \begin{proof}
                 Since $s$ is closed immersion, then $s_*$ is exact. Then we push-forward the above short exact sequence by $s_*$, then shift the resulting short exact sequence by degree $1$. Note that 
                    \begin{eqnarray*}
                        \Ext^1(S_1,S_0) &=& \Ext^1_X(s_*\Omega(-1)[1], s_*\CO(-2)) \\
                                        &=& \Hom_X(s_* \Omega(-1), s_*\CO(-2)) \\
                                        &=& \Hom_{\CP^2}(\Omega(-1), \CO(-2)),
                    \end{eqnarray*}
                    we have $U_0 \cong s_* P[1]$ by the uniqueness of the cone. 
             \end{proof}
             Moreover, $P$ is again an exceptional object   and $(\CO(-3), \Omega(-1), P)$ forms a full and strong exceptional collection on $\CP^2$ by Theorem 2.10.

             \item The dual objects in $D^b(X)$ are $\wtil{T_0},\ \pi^*\CO(-2),\ \pi^*\CO$, where $\wtil{T_0}$ fits into the universal extension sequence 
             $$
                  \xymatrix{
                      0 \ar[r] & \pi^*\Omega(-1) \ar[r] & \wtil{T_0} \ar[r] & \pi^* \CO^3 \ar[r] & 0.
                }
             $$

            \item 
            \begin{itemize}
                \item 
                 To calculate $\Ext^i_X(U_1,U_0)$, we take the long exact sequence of (\ref{univ_seq1}) associated with $\Hom_X(U_1,-) = \Hom_X(S_1[-1],-)$:
                  \begin{equation*} 
                       \begin{tikzcd}
                           \Ext^1_X(U_1,S_0) = 0\arrow[r] 
                                &  \Ext^1_X(U_1,U_0) \arrow[r] \arrow[d,phantom,""{coordinate, name=Z}] 
                                  & \Ext^1_X(U_1,S_1^{\oplus 3}) = \BC^{30}  \arrow[dll,
                                                            "\delta",
                                                            rounded corners,
                                                            to path={ -- ([xshift=2ex]\tikztostart.east)
                                                        |- (Z) [near end]\tikztonodes
                                                        -| ([xshift=-2ex]\tikztotarget.west)
                                                        -- (\tikztotarget)}] \\
                             \Ext^2_X(U_1,S_0) = \BC^3 \arrow[r] 
                                & \Ext^2_X(U_1,U_0) \arrow[r]  \arrow[d,phantom,""{coordinate, name=W}] 
                                      &  \Ext^2_X(U_1,S_1^{\oplus 3}) = 0  \arrow[dll,
                                                            "",
                                                            rounded corners,
                                                            to path={ -- ([xshift=2ex]\tikztostart.east)
                                                        |- (W) [near end]\tikztonodes
                                                        -| ([xshift=-2ex]\tikztotarget.west)
                                                        -- (\tikztotarget)}] \\
                           \Ext^3_X(U_1,S_0) = 0\arrow[r] 
                                  & \Ext^3_X(U_1,U_0) \arrow[r]  \arrow[d,phantom,""{coordinate, name=W}] 
                                   &  \Ext^3_X(U_1,S_1^{\oplus 3}) = \BC^3  \arrow[dll,
                                                                 "",
                                                            rounded corners,
                                                            to path={ -- ([xshift=2ex]\tikztostart.east)
                                                        |- (W) [near end]\tikztonodes
                                                        -| ([xshift=-2ex]\tikztotarget.west)
                                                        -- (\tikztotarget)}]\\
                             \Ext^4_X(U_1,S_0)=0\arrow[r]  
                                & \Ext^4_X(U_1,U_0) \arrow[r] 
                                    & \Ext^4_X(U_1,S_1^{\oplus 3}) = 0
                        \end{tikzcd}
                         \end{equation*}
                To determine the surjectivity of the connection map $\delta$, we can show that \\$\Ext^2_X(U_1,U_0) = 0$: by Serre duality, we have
                    \begin{eqnarray*}
                        \Ext^2_X(U_1,U_0) &=& \Ext^2_X(s_*\Omega(-1), s_* P[1]) \\
                        &=& \Ext^1_X(s_*P, s_*\Omega(-1) )^*
                    \end{eqnarray*}
                    Then we use the Koszul resolution again, we have
                    \begin{eqnarray*}
                        \Ext^1_X(s_*P, s_*\Omega(-1)) &=& \Ext^1_{\CP^2}(s^*s_* P, \Omega(-1)) \\
                        &=& \Ext^1_{\CP^2}(P, \Omega(-1)) \oplus \Hom_{\CP^2}(P\otimes \TT, \Omega(-1))
                    \end{eqnarray*}
                    The first component vanishes since $P$ and $\Omega(-1)$ forms part of  a strong exceptional collection. By (\ref{PP}) we get that $c_0(P) = 5$ and $c_1(P) = -13$, thus $c_0 (P\otimes \TT) = 10$ and $c_1(P\otimes \TT) = -11$. So we have $\mu(P\otimes \TT) = -10/11 > \mu(\Omega(-1)) = -5/2$. Since $P$, $\TT$ and $\Omega(-1)$ are slope semistable sheaves, and so is $P\otimes \TT$ (see \cite[Theorem 3.1.4]{HL10}), there is no map between the semistable sheaves from the larger slope to the smaller one, so $\Hom_{\CP^2}(P\otimes \TT,\Omega(-1)) = 0$. We have shown that $\Ext^2_X(U_1,U_0) = 0$, by taking this to the above long exact sequence, we can read that
            \begin{eqnarray*}
                 \Ext^i_X(U_1,U_0) &=& \left\{\begin{array}{cc}
                                        \BC^{27} & i = 1\\
                                        \BC^3 & i=3 \\
                                        0 & \text{otherwise}
                                        \end{array}
                                \right.
             \end{eqnarray*}
                \item For $\Ext^i_X(U_1,U_2)$, we apply the functor $\Psi$, and using that $\Psi(U_1) = U_1$, $\Psi(U_0) = U_2$, we have
                         \begin{eqnarray*}
                              \Ext^i_X(U_1,U_2) =  \Ext^i_X(U_1,U_0) =  \left\{\begin{array}{cc}
                                        \BC^{27} & i = 1\\
                                        \BC^3 & i=3 \\
                                        0 & \text{otherwise}
                                        \end{array}
                                \right.
                     \end{eqnarray*}
             \item 
                To get $\Ext^i(U_2,U_0)$, we first  take the long exact sequence of (\ref{univ_seq2}) associated with $\Hom(-,U_0)$ (note that $\Ext^i_X(S_1,U_0) = \Ext^{i-1}_X(U_1,U_0)$):
                \begin{equation}\label{ses_U2U0} 
                       \begin{tikzcd}
                           \Ext^1_X(S_1,U_0)^3 = 0\arrow[r] 
                                &  \Ext^1_X(U_2,U_0) \arrow[r] \arrow[d,phantom,""{coordinate, name=Z}] 
                                  & \Ext^1_X(S_2,U_0) \arrow[dll,
                                                            "",
                                                            rounded corners,
                                                            to path={ -- ([xshift=2ex]\tikztostart.east)
                                                        |- (Z) [near end]\tikztonodes
                                                        -| ([xshift=-2ex]\tikztotarget.west)
                                                        -- (\tikztotarget)}] \\
                             \Ext^2_X(S_1,U_0)^3 = \BC^{81} \arrow[r] 
                                & \Ext^2_X(U_2,U_0) \arrow[r]  \arrow[d,phantom,""{coordinate, name=W}] 
                                      &  \Ext^2_X(S_2,U_0)  \arrow[dll,
                                                            "",
                                                            rounded corners,
                                                            to path={ -- ([xshift=2ex]\tikztostart.east)
                                                        |- (W) [near end]\tikztonodes
                                                        -| ([xshift=-2ex]\tikztotarget.west)
                                                        -- (\tikztotarget)}] \\
                           \Ext^3_X(S_1,U_0)^3 = 0\arrow[r] 
                                  & \Ext^3_X(U_2,U_0) \arrow[r]  \arrow[d,phantom,""{coordinate, name=W}] 
                                   &  \Ext^3_X(S_2,U_0) \arrow[dll,
                                                                 "",
                                                            rounded corners,
                                                            to path={ -- ([xshift=2ex]\tikztostart.east)
                                                        |- (W) [near end]\tikztonodes
                                                        -| ([xshift=-2ex]\tikztotarget.west)
                                                        -- (\tikztotarget)}]\\
                             \Ext^4_X(S_1,U_0)^3=\BC^9\arrow[r]  
                                & \Ext^4_X(U_2,U_0)=0 \arrow[r] 
                                    & \Ext^4_X(S_2,U_0)
                        \end{tikzcd}
                         \end{equation}
            We first determine 
            $$\Ext^i_X(S_2,U_0) = \Ext^i_X(s_*\CO(-3)[2], s_*P[1]) = \Ext^{i-1}_X(s_*\CO(-3),s_*P),
            $$ where $P$ was defined in the short exact sequence (\ref{PP}). 
            \begin{enumerate}
                \item $i=1$, then 
                    \begin{eqnarray*}
                        \Ext^1_X(S_2, U_0) &=&\Hom_X(s_*\CO(-3), s_* P) \\
                                        &=& \Hom_{\CP^2}(\CO(-3),P)
                    \end{eqnarray*}
                    then we take $\Hom(\CO(-3),-)$ to (\ref{PP}), and note that $\Ext^1(\CO(-3),P) = 0$ since $(\CO(-3),\Omega(-1),P)$ forms a strong exceptional collection, then we get that 
                    $$
                        \Hom_{\CP^2}(\CO(-3),P) = \BC^6.
                    $$
                    Thus $\Ext^1_X(S_2,U_0) = \BC^6$.
                \item $i=2$, then
                    \begin{eqnarray*}
                       \quad \Ext^2_X(S_2,U_0) &=& \Ext^1_{\CP^2}(\CO(-3), P) \oplus \Hom_{\CP^2}(\CO(-3)\otimes \TT, P) 
                    \end{eqnarray*}
                    then first component vanishes. We have  $c_0\left(\TT(-3)\right) = 2$, $c_1\left(\TT(-3)\right) = 0$, then $\mu(\TT(-3)) = 0 > \mu(P) = -13/5$, so there is no map from $\TT(-3)$ to $P$ since they are both semistable sheaves. Thus
                    $$
                        \Ext^2_X(S_2,U_0) = 0.
                    $$
                \item $i=3$, then 
                        \begin{eqnarray*}
                         \quad   \Ext^3_X(S_2,U_0) &=& \Ext^2_{\CP^2}(\CO(-3), P) \oplus \Ext^1_{\CP^2}(\CO(-3)\otimes \TT, P) \\
                                        & & \oplus \Hom_{\CP^2}(\CO, P)
                        \end{eqnarray*}
                        the first component vanishes, the third component vanishes because $\mu(\CO) > \mu(P)$ and they are semistable sheaves. For the second component, we take $\Hom_{\CP^2}(\TT(-3),-)$ on short exact sequence (\ref{PP})
                        \begin{equation}\label{ses_TT}
                        \resizebox{6in}{!}{
                        \xymatrix{
                             \Hom_{\CP^2}\left(\TT(-3), \CO(-2)\right) \ar[r] & \Ext^1_{\CP^2}\left(\TT(-3),P\right) \ar[r]&
                             \Ext^1_{\CP^2}\left(\TT(-3),\Omega(-1)\right)^3 \ar[r] &
                             \Ext^1_{\CP^2} \left(\TT(-3),\CO(-2)\right)  
                        }
                        }
                        \end{equation}
                        
                        We have $\Hom_{\CP^2}(\TT(-3),\CO(-2)) = \Ext^1_{\CP^2}(\TT(-3),\CO(-2)) = 0$ by the Bott formula. By taking $\Hom_{\CP^2}(\TT(-2),-)$ on the Euler sequence, 
                         we have \\ $\Ext^1_{\CP^2}(\TT(-2),\Omega) = \Ext^1_{\CP^2}\left(\TT(-3),\Omega(-1)\right) = \BC^3$. Back to (\ref{ses_TT}), we have $\Ext^1_{\CP^2}(\TT(-3),P) = \BC^9$. So finally we have 
                        $$
                        \Ext^3_X(S_2,U_0) = \BC^9.
                        $$
            \end{enumerate}
            Taking the above infomation back to (\ref{ses_U2U0}), we get that $ \Ext^3_X(U_2,U_0) = 0$ by the exactness. By Serre duality and applying the autoequivalence $\Psi$, we can also get $\Ext^1_X(U_2,U_0)$:
            \begin{eqnarray*}
                \Ext^1_X(U_2,U_0) &= & \Ext^3_X(U_0,U_2)^* \\
                                &=& \Ext^3_X\left(\Psi(U_0),\Psi(U_2)\right)^* \\
                                &=& \Ext^3_X(U_2,U_0)^*\\
                                &=& 0.
            \end{eqnarray*}
            To determine $\Ext^2_X(U_2,U_0)$, we can calculate the Euler characteristic 
            $$\chi(U_2,U_0) = \sum_{i=0}^4 (-1)^i\dim \Ext^i_X(U_2,U_0)$$
            This can be easily done by reducing to the level of Grothendieck group: we denote $[A]$ the equivalent class in $K_0(X)$, then 
            \begin{equation}
                [U_i] = 3[S_1]+[S_i]
            \end{equation}
            in $K_0(X)$ for $i = 0,\ 2$ by definition. Then 
            \begin{eqnarray*}
                \chi\left([U_2],[U_0]\right) &=& \chi\left(3[S_1]+[S_2], 3[S_1]+[S_0]\right) \\
                                &=& 9\chi\left([S_1],[S_1]\right) + 3\chi\left([S_1],[S_0]\right) + 3\chi\left([S_2],[S_1]\right) + \chi\left([S_2],[S_0]\right) 
            \end{eqnarray*}
            From the calculation of the Ext-quiver of $\CA$, we know that $\chi\left(S_1,S_1\right) = 12$, \\ $\chi\left([S_1],[S_0]\right) = \chi\left([S_2],[S_1]\right) = -6$, and $\chi\left([S_2],[S_0]\right) = 3$. So we obtain that 
            \begin{eqnarray*}
                \chi(U_2,U_0) &=& \sum_i (-1)^i\dim \Ext^i_X(U_2,U_0) \\
                             &=& \dim \Ext^2_X(U_2,U_0)\\
                             &=& 75
            \end{eqnarray*}
            So we have $\Ext^2_X(U_2,U_0) = \BC^{75}$. In summary, we have
            \begin{eqnarray*}
              \Ext^i_X(U_2,U_0) &=& \left\{\begin{array}{cc}
                                        \BC^{75} & i=2 \\
                                        0 & \text{otherwise}
                                        \end{array}
                                \right.
             \end{eqnarray*}
            And the $\Ext^i_X(U_0,U_2) $ is obtained by the Serre duality:
            \begin{eqnarray*}
              \Ext^i_X(U_0,U_2) &=& \Ext^{4-i}_X(U_2,U_0)^*\\
                                &=&
                                    \left\{\begin{array}{cc}
                                        \BC^{75} & i=2 \\
                                        0 & \text{otherwise}
                                        \end{array}
                                \right.
             \end{eqnarray*}
        \item
            For $\Ext^i_X(U_0, U_0)$, we have $\Hom_X(U_0,U_0) = \Ext^4_X(U_0,U_0)^* = \BC$, and 
            \begin{eqnarray*}
                \Ext^1_X(U_0,U_0) &=& \Ext^1_X(s_*P,s_*P) \\
                                 &=& \Ext^1_{\CP^2}(P,P) \oplus \Hom_{\CP^2}(P\otimes \TT, P).
            \end{eqnarray*}
            Then the first component vanishes because $P$ is exceptional. In the calculations of $\Ext^i(U_1,U_0)$, we have $\mu(P\otimes T) = -10/11 > \mu(P) = -13/5$. Then the second component also vanishes. 
            
            Also we obtain  $\Ext^3_X(U_0,U_0) = \Ext^1_X(U_0,U_0)^* = 0$.
            
            Then we calculate the Euler characteristic $\chi(U_0,U_0)$:
            \begin{eqnarray*}
                \chi(U_0,U_0) &=& \chi\left(3[S_1]+[S_0],3[S_1]+[S_0]\right)\\
                &=& 9 \chi(S_1,S_1) + 3\chi(S_1,S_0) + 3\chi(S_0,S_1)+ \chi(S_0,S_0) \\
                &=& 75
            \end{eqnarray*}
            We have $\chi(U_0,U_0) = 2+\dim \Ext^2_X(U_0,U_0) = 75$, so $\Ext^2_X(U_0,U_0) = \BC^{73}$. In summary, we have

            \begin{eqnarray*}
            \Ext^i_X(U_0,U_0) &=& \left\{\begin{array}{cc}
                                        \BC^1 & i = 0,\ 4 \\
                                        \BC^{73} & i=2 \\
                                        0 & \text{otherwise}
                                        \end{array}
                                \right.
            \end{eqnarray*}
         By applying the functor $\Psi$, we have 
                    \begin{eqnarray*}
                        \Ext^i_X(U_2,U_2) &=& \Ext^i_X\left(\Psi(U_0),\Psi(U_0)\right) \\
                                        &=&
                                            \left\{\begin{array}{cc}
                                        \BC^1 & i = 0,\ 4 \\
                                        \BC^{73} & i=2 \\
                                        0 & \text{otherwise}
                                        \end{array}
                                \right.
            \end{eqnarray*}
        \item
        Finally for $\Ext^i_X(U_1,U_1)$ we have 
        \begin{eqnarray*}
            \Ext^i_X(U_1,U_1) &=& \Ext^i_X(S_1,S_1)\\
                              &=& \left\{\begin{array}{cc}
                                        \BC & i=0,\ 4\\
                                        \BC^{10} & i = 2\\
                                        0 & \text{otherwise}
                                        \end{array}
                                \right.
        \end{eqnarray*}
        \end{itemize}
        
        \item the Ext quiver $Q(\CC)$ will be:
          \begin{equation} \label{ext_qui_C}
                \begin{tikzcd} 
                U_0 \arrow[out=120,in=180,loop,red,"73"]  \arrow[r,bend left, "27"] & U_1 \arrow[out=60,in=120,loop,red,"10"] \arrow[l,bend left,"3"] \arrow[r,bend right,"3"] & U_2 \arrow[out=0,in=60,loop,red,"73"]  \arrow[l,bend right,"27"] \arrow[curve={height=-50pt},red,"75", from=1-3, to=1-1] \arrow[curve={height=-50pt},red,"75", from=1-1, to=1-3] 
                \end{tikzcd}
        \end{equation}
        as before the black arrows are of degree 1, the red arrows are of degree 2, and we omit the arrows of other degree since they can be read from the Serre duality.
\end{enumerate} 

\subsection{Simple tilt at $S_2$}
We denote by $\Gamma := L_{S_2}\CA$ and $\Gamma' := L_{S_0} \CA$. Combining the  Proposition \ref{swap} and Lemma \ref{aut_sim_tilt}  we have 
\begin{eqnarray*}   
    \Psi\left(L_{S_0}\CA\right) \cong L_{\Psi(S_0)}\Psi\left(\CA\right)  \cong L_{S_2} \CA.
\end{eqnarray*}
Thus $\Gamma' = L_{S_0}\CA \cong \Gamma = L_{S_2}\CA$. Under the equivalence we only need to calculate one of the Ext-quivers (here we choose $\Gamma$):
    \begin{enumerate}
        \item the simple objects are $W_0 = s_*\CO(-2),\ W_1 = s_*\CO(-3)[1], \ W_2 = s_* \CO(-1)[2]$;
        \item the dual objects  in $D^b(X)$ are $\pi^* \CO(-2),\ \pi^*\Omega,\  \pi^*\CO(-1)$;
        \item 
            \begin{itemize}
                \item calculations of $\Ext^i_X(W_1,W_0)$:
            \begin{eqnarray*}
                \Ext^i_{X}\left(s_* \CO(-3)[1], s_*\CO(-2) \right) &=& \bigoplus_{k=0}^2\Ext^{i-k-1}_{\CP^2}\left(\CO(-3)\otimes \wedge^k\TT, \CO(-2)\right) \\    &=&\bigoplus_{k=0}^2 \HO^{i-k-1}\left(\CP^2, \Omega^k \otimes \CO(1)\right) \\
                &=& \left\{\begin{array}{cc}
                     \BC^3 & i = 1 \\
                     0 & \text{otherwise}
                \end{array}
                \right.
            \end{eqnarray*}
        \item calculations of $\Ext^i_X(W_2,W_0)$: 
            \begin{eqnarray*}
                \Ext^i_X\left(s_* \CO(-1)[2], s_*\CO(-2) \right) &=& \bigoplus_{k=0}^2\Ext^{i-k-2}_{\CP^2}\left(\CO(-1)\otimes \wedge^k\TT, \CO(-2)\right) \\    &=&\bigoplus_{k=0}^2 \HO^{i-k-2}\left(\CP^2, \Omega^k \otimes \CO(-1)\right) \\
                &=& \left\{\begin{array}{cc}
                     \BC^6 & i = 2 \\
                     \BC^3 & i = 3 \\
                     0 & \text{otherwise}
                \end{array}
                \right.
            \end{eqnarray*}
        \item similarly for $\Ext^i_X(W_2,W_1)$, we have
                \begin{eqnarray*}
                \Ext^i_X\left(s_* \CO(-1)[2], s_*\CO(-3)[1] \right)   &=&\bigoplus_{k=0}^2 \HO^{i-k-1}\left(\CP^2, \Omega^k \otimes \CO(-2)\right) \\
                &=& \left\{\begin{array}{cc}
                     \BC^3 & i = 1 \\
                     0 & \text{otherwise}
                \end{array}
                \right.
            \end{eqnarray*}
        \item for $\Ext^i_X(W_j,W_j)$ ($j=0,\ 2$) we have
            \begin{eqnarray*}
                \Ext^i_{X}\left(W_j,W_j \right)   &=&\bigoplus_{k=0}^2 \HO^{i-k}\left(\CP^2, \Omega^k \right) \\
                &=& \left\{\begin{array}{cc}
                     \BC & i = 0, \ 2,\ 4 \\
                     0 & \text{otherwise}
                \end{array}
                \right.
            \end{eqnarray*}
        
            \end{itemize}
        \item the Ext-quiver $Q(\CB)$ will be 
         $$
            \begin{tikzcd}
                     W_0 \arrow[out=120,in=180,loop,red,"1"]  \arrow[r,bend left, "3"] & W_1 \arrow[out=60,in=120,loop,red,"1"]  \arrow[r,bend left,"3"] & W_2 \arrow[out=0,in=60,loop,red,"1"]  \arrow[curve={height=-50pt},red,"6", from=1-1, to=1-3] \arrow[curve={height=-50pt},red,"6", from=1-3, to=1-1] 
                     \arrow[curve={height=-30pt},"3", from=1-3, to=1-1] 
                    \end{tikzcd}
                    $$
    \end{enumerate}

\bibliographystyle{plain}
\bibliography{biblio}
\end{document}